\documentclass[a4paper,10pt]{article}
\usepackage[utf8]{inputenc}

\usepackage{enumerate}
\usepackage{graphicx} 
\usepackage[T1]{fontenc}
\usepackage{amsmath, amssymb, amsthm}
\usepackage{bbm}
\usepackage[normalem]{ulem}
\usepackage[english]{babel}
\usepackage{eurosym}
\usepackage{fullpage}
\usepackage{palatino}
\usepackage{tikz}
\usepackage{verbatim}
\usepackage{fancybox}
\usepackage{enumitem}
\usepackage{enumerate}
\usepackage{mathtools}
\usepackage{thmtools} 
\usepackage{thm-restate}
\usepackage{url}
\usepackage[unicode]{hyperref}
\usepackage{bookmark}
\usepackage[absolute]{textpos}

\newtheorem{theorem}{Theorem}
\newtheorem{lemma}[theorem]{Lemma}
\newtheorem{corollary}[theorem]{Corollary}
\newtheorem{conjecture}[theorem]{Conjecture}
\newtheorem{claim}[theorem]{Claim}

\newcommand{\Abb}{\mathbf{A}}

\newcommand{\N}{\mathbb{N}}

\newcommand{\Pp}{\mathcal{P}}
\newcommand{\Qq}{\mathcal{Q}}
\newcommand{\Rr}{\mathcal{R}}
\newcommand{\Aa}{\mathcal{A}}
\newcommand{\Bb}{\mathcal{B}}
\newcommand{\Ff}{\mathcal{F}}
\newcommand{\Ss}{\mathcal{S}}
\newcommand{\K}{\mathcal{K}}

\newcommand{\Nn}{\mathcal{N}}
\newcommand{\Oh}{\mathcal{O}}

\newcommand{\size}[1]{{|#1|}}

\newcommand{\tw}{\mathrm{tw}}
\newcommand{\td}{\mathrm{td}}

\newcommand{\wcol}{\mathrm{wcol}}
\newcommand{\scol}{\mathrm{scol}}

\renewcommand{\leq}{\leqslant}
\renewcommand{\geq}{\geqslant}
\renewcommand{\le}{\leqslant}

\renewcommand{\setminus}{-}
\renewcommand{\epsilon}{\varepsilon}

\title{Degeneracy of $P_t$-free and  $C_{\geq t}$-free graphs with no large complete bipartite subgraphs}
\author{
Marthe Bonamy
\thanks{Univ.\ Bordeaux, CNRS,  Bordeaux INP, LaBRI, UMR 5800, F-33400, Talence, France. E-mail: \texttt{marthe.bonamy@u-bordeaux.fr}.\newline Supported by ANR Project GrR (\textsc{ANR-18-CE40-0032}).}
\and Nicolas Bousquet
\thanks{LIRIS, CNRS, Universit\'e Claude Bernard Lyon 1, Université de Lyon, France. E-mail: \texttt{nicolas.bousquet@univ-lyon1.fr}.\newline Supported by ANR Project GrR (\textsc{ANR-18-CE40-0032}).}
\and Micha\l{} Pilipczuk
\thanks{Institute of Informatics, University of Warsaw, Poland. E-mail: \texttt{michal.pilipczuk@mimuw.edu.pl}.\newline
This work is 
a part of project TOTAL that has received funding from the European Research Council (ERC) 
under the European Union's Horizon 2020 research and innovation programme (grant agreement No.~677651).}
\and Pawe\l{} Rz\k{a}\.zewski
\thanks{Faculty of Mathematics and Information Science, Warsaw University of Technology and Institute of
Informatics, University of Warsaw. E-mail: \texttt{p.rzazewski@mini.pw.edu.pl}.\newline
This work is 
a part of project CUTACOMBS that has received funding from the European Research Council (ERC) 
under the European Union's Horizon 2020 research and innovation programme (grant agreement No.~714704).
}
\and St\'{e}phan Thomass\'{e}
\thanks{Univ Lyon, CNRS, ENS de Lyon, Université Claude Bernard Lyon 1, LIP UMR5668, France. E-mail: \texttt{stephan.thomasse@ens-lyon.fr}.}
\and Bartosz Walczak
\thanks{Department of Theoretical Computer Science, Faculty of Mathematics and Computer Science, Jagiellonian University, Krak\'ow, Poland. E-mail: \texttt{walczak@tcs.uj.edu.pl}.\newline
Partially supported by the National Science Centre of Poland grant no.\ 2019/34/E/ST6/00443.}
}

\hypersetup{
  pdftitle={Degeneracy of P\_t-free and C\_\{≥t\}-free graphs with no large complete bipartite subgraphs},
  pdfauthor={Marthe Bonamy, Nicolas Bousquet, Michał Pilipczuk, Paweł Rzążewski, Stéphan Thomassé, Bartosz Walczak},
  colorlinks=true,linkcolor=black,citecolor=black,filecolor=black,urlcolor=black
}

\date{}

\begin{document}

\maketitle

\begin{abstract}
A hereditary class of graphs $\mathcal{G}$ is \emph{$\chi$-bounded} if there exists a function $f$ such that every graph $G \in \mathcal{G}$ satisfies $\chi(G) \leq f(\omega(G))$, where $\chi(G)$ and $\omega(G)$ are the chromatic number and the clique number of $G$, respectively.
As one of the first results about $\chi$-bounded classes, Gy\'{a}rf\'{a}s proved in 1985 that if $G$ is $P_t$-free, i.e., does not contain a $t$-vertex path as an induced subgraph, then $\chi(G) \leq (t-1)^{\omega(G)-1}$.
In 2017, Chudnovsky, Scott, and Seymour proved that  $C_{\geq t}$-free graphs, i.e., graphs that exclude induced cycles with at least $t$ vertices, are $\chi$-bounded as well, and the obtained bound is again superpolynomial in the clique number. 
Note that $P_{t-1}$-free graphs are in particular $C_{\geq t}$-free.
It remains a major open problem in the area whether for  $C_{\geq t}$-free, or at least $P_t$-free graphs $G$, the value of $\chi(G)$ can be bounded from above by a polynomial function of $\omega(G)$.
We consider a relaxation of this problem, where we compare the chromatic number with the size of a largest balanced biclique contained in the graph as a (not necessarily induced) subgraph.
We show that for every $t$ there exists a constant $c$ such that for every $\ell$ and every $C_{\geq t}$-free graph which does not contain $K_{\ell,\ell}$ as a subgraph, it holds that $\chi(G) \leq \ell^{c}$.
\end{abstract}

\begin{textblock}{20}(0, 11.6)
\includegraphics[width=40px]{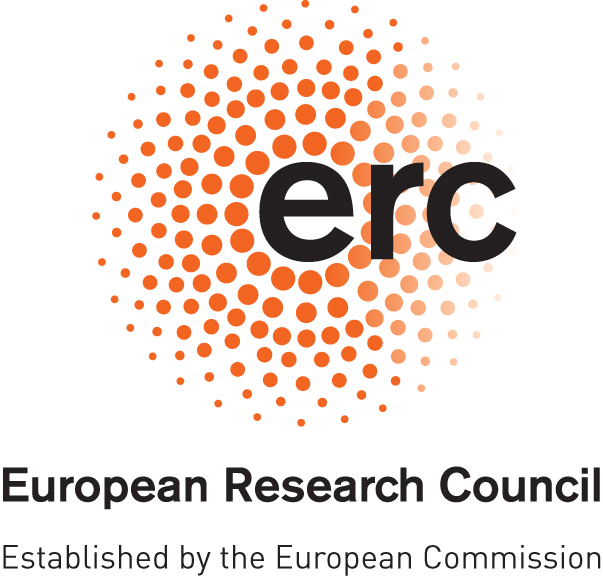}%
\end{textblock}
\begin{textblock}{20}(-0.25, 12)
\includegraphics[width=60px]{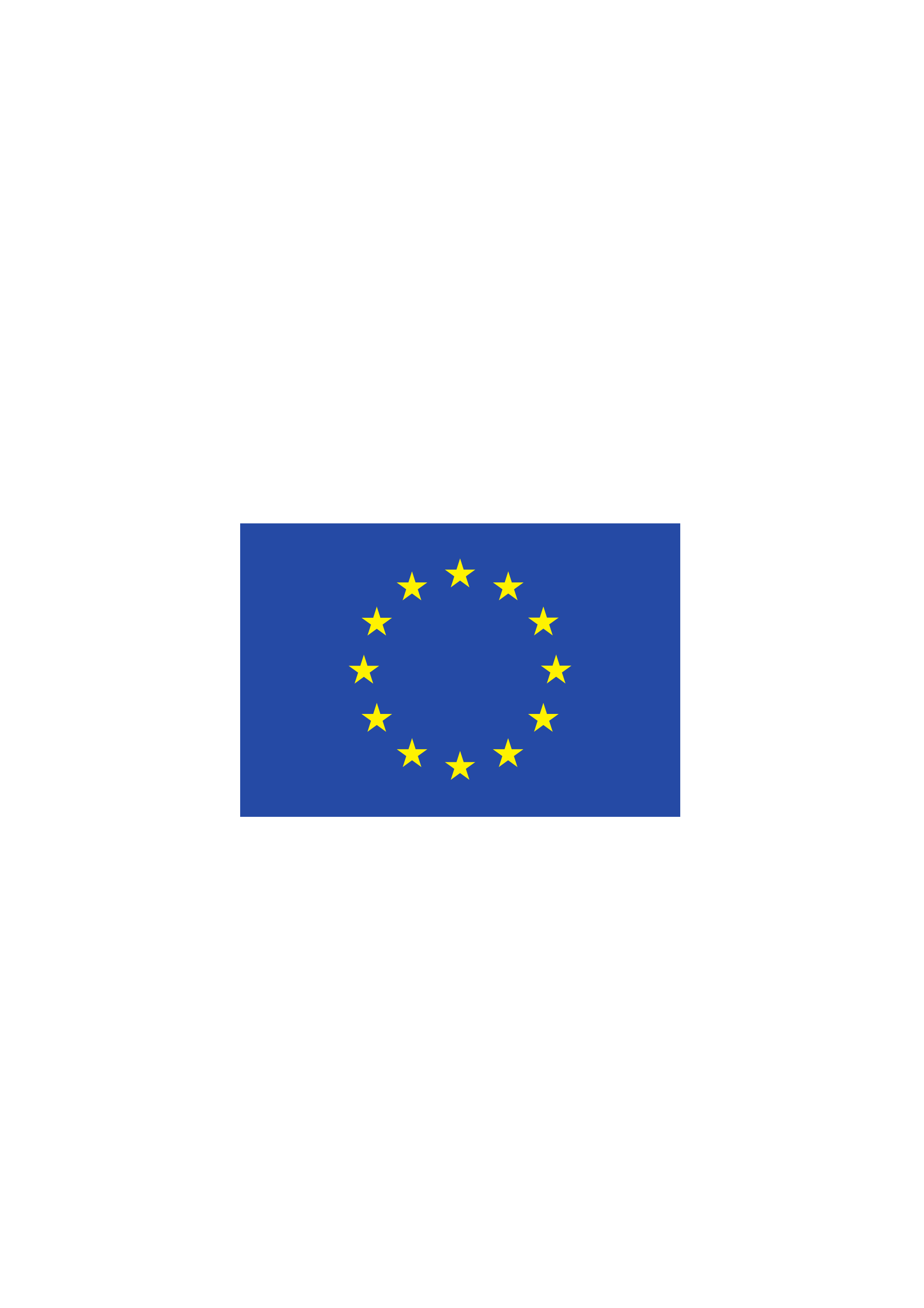}%
\end{textblock}

\section{Introduction}
For an integer $k$, a {\em{$k$-coloring}} of a graph $G$ is a partition of $V(G)$ into $k$ independent sets.
The \emph{chromatic number} of a graph $G$, denoted by $\chi(G)$, is the minimum $k$ for which $G$ admits a $k$-coloring.
The chromatic number is arguably one of best studied graph parameters, and trying to estimate its value for graphs belonging to certain classes gave rise to many important results in graph theory~\cite{appel1976every,chudnovsky2006strong,soifer2008mathematical}.
An obvious lower bound on $\chi(G)$ is $\omega(G)$, the size of the largest \emph{clique} in $G$, i.e., set of pairwise adjacent vertices.
In general, $\chi(G)$ cannot be bounded in terms of $\omega(G)$, as witnessed by various constructions of triangle-free graphs with arbitrarily large chromatic number~\cite{mycielski1955coloriage,zykov1949some}.

However, obtaining such an upper bound might be possible when some additional restrictions are imposed on $G$.
A class of graphs $\mathcal{G}$ is \emph{$\chi$-bounded} if there exists a function $f$, depending on $\mathcal{G}$ only, such that for every $G \in \mathcal{G}$ and every induced subgraph $G'$ of $G$ it holds that $\chi(G') \leq f(\omega(G'))$. The function $f$ is sometimes called a \emph{$\chi$-binding function}. We refer the reader to the recent survey of Scott and Seymour~\cite{DBLP:journals/jgt/ScottS20} for more background.

A rich family of natural graph classes that could be considered in this context can be defined by forbidding certain substructures.
For a fixed graph $H$, a graph $G$ is \emph{$H$-free} if it does not contain an induced subgraph isomorphic to $H$.
One of the central problems regarding $\chi$-boundedness is the following conjecture, formulated independently by Gy\'{a}rf\'{a}s~\cite{gyarfas1975ramsey} and Sumner~\cite{sumner1981subtrees}.

\begin{conjecture}[Gy\'{a}rf\'{a}s--Sumner]\label{GSconjecture}
For every tree $H$, the class of $H$-free graphs is $\chi$-bounded.
\end{conjecture}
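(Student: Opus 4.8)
Conjecture~\ref{GSconjecture} remains a central open problem, so the best I can offer is an attack plan built on the \emph{levelling} technique that Gyárfás introduced for the special case $H=P_t$. The overall scheme is a double induction: an \emph{outer} induction on $\omega(G)$ — peeling off one ``layer'' of the recursion per unit of clique number, exactly as in Gyárfás's bound $\chi(G)\le (t-1)^{\omega(G)-1}$ for $P_t$-free graphs — and an \emph{inner} induction on $|V(H)|$, whose step grafts a pendant subtree of $H$ onto an already-constructed induced path inside $G$. Throughout one works with a connected graph $G$ (components are handled separately) of large chromatic number and tries to locate an induced copy of $H$.

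First I would dispose of the base cases. If $H=K_{1,t}$ is a star, then in an $H$-free graph every neighbourhood $N(v)$ contains no independent set on $t$ vertices (otherwise $v$ together with such a set induces $K_{1,t}$) and no clique on $\omega(G)$ vertices (otherwise adding $v$ exceeds $\omega(G)$); by Ramsey's theorem $|N(v)|< R(t,\omega(G))$, so $G$ has bounded degeneracy and hence bounded chromatic number. If $H=P_t$, one runs Gyárfás's argument: a breadth-first search from any root partitions $V(G)$ into levels $L_0,L_1,\dots$, and since the union of even levels and the union of odd levels are edgeless between distinct levels, $\chi(G)\le \max_i\chi(G[L_i])+\max_i\chi(G[L_i])$; thus some level $L_i$ has $\chi(G[L_i])\ge \chi(G)/2$, and after restricting to the part of $L_i$ attached to a well-chosen vertex $w\in L_{i-1}$ one keeps a connected subgraph of still-large chromatic number. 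Iterating this $t$ times while recording $w$ at each step produces an induced $P_t$ unless $\chi(G)$ was bounded all along.

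For a general tree $H$ I would peel off a pendant \emph{path} (say ending at a vertex attached to the vertex $a$ of the smaller tree $H'=H-(\text{that path})$) and run the same levelling to build an induced path $v_1v_2\cdots v_k$ along successive BFS levels, maintained so that after deleting $N[\{v_1,\dots,v_k\}]$ a connected ``reservoir'' $R$ of chromatic number at least $\chi(G)/(\text{const})$ survives, with $R$ adjacent to $v_k$ but to none of $v_1,\dots,v_{k-1}$. Since $\omega(R)\le\omega(G)$ and $|V(H')|<|V(H)|$, the inner inductive hypothesis gives an induced copy of $H'$ inside $R$; splicing the path $v_k v_{k-1}\cdots$ onto the vertex of that copy playing the role of $a$ yields an induced $H$ in $G$, the non-adjacency between $R$ and $\{v_1,\dots,v_{k-1}\}$ being exactly what keeps the result induced.

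The hard part — and the reason the conjecture is still open — is that this works cleanly only when $H$ has radius at most $2$: then branching occurs ``at the root'' and the copy of $H'$ living entirely inside a single reservoir can be reached from it. Once $H$ has radius at least $3$, every rooting forces a branch at distance $\ge 2$ from the attachment path, and one must instead grow \emph{many} induced paths out of a common connected core so that the far ends of different paths, and the cores feeding them, are pairwise non-adjacent; the levelling argument gives essentially no control over these cross-interactions. Unconditional progress stalls precisely here (radius-$2$ trees by Kierstead--Penrice, plus brooms and a few other shapes), and I would attempt to combine the radius-$2$ machinery with a recursive ``tree-of-reservoirs'' construction, but organising the simultaneous non-adjacency constraints across all branches is the obstacle I expect not to surmount in full generality. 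Finally, one should not aim for a polynomial-in-$\omega(G)$ binding function: already for $H=P_t$ the best known (and conjecturally near-optimal) bound is superpolynomial, so the target is merely \emph{some} function of $\omega(G)$ depending on $H$.
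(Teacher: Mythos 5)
The statement you were asked about is not a theorem of the paper at all: it is the Gy\'arf\'as--Sumner conjecture, quoted from the literature as a motivating open problem, and the paper contains no proof of it (its actual contributions, Theorems~\ref{thm:S'_d}, \ref{thm:main} and~\ref{thm:td}, are biclique relaxations of this kind of question). You correctly recognize this and do not claim to have a proof, so there is nothing in the paper to compare your argument against; what you submitted is an attack plan, and as such it necessarily contains a genuine gap --- namely the conjecture itself.

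As a survey of the known machinery your sketch is essentially accurate: the star case via Ramsey (a neighbourhood of a vertex in a $K_{1,t}$-free graph has no independent set of size $t$ and no clique of size $\omega(G)$, hence bounded size), Gy\'arf\'as's BFS-levelling argument for $P_t$ giving the $(t-1)^{\omega(G)-1}$ bound cited in the introduction, the pendant-path splicing with a high-chromatic connected reservoir attached only to the last path vertex, and the historical record (radius-$2$ trees by Kierstead--Penrice, further special trees later). You also locate the obstruction in the right place: once $H$ has a branch vertex far from the attachment point, one must grow several induced paths out of a common core while controlling all cross-adjacencies between the different reservoirs, and the levelling technique gives no handle on those interactions. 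Two small cautions: your inequality ``$\chi(G)\le \max_i\chi(G[L_i])+\max_i\chi(G[L_i])$'' is just the standard parity-of-levels bound $\chi(G)\le 2\max_i\chi(G[L_i])$ and should be stated that way; and maintaining a reservoir of chromatic number $\chi(G)/\mathrm{const}$ that is anticomplete to \emph{all} earlier path vertices already requires the careful ``re-rooting'' bookkeeping of Gy\'arf\'as's proof, which your outline glosses over. Neither of these affects the verdict: the proposal is a reasonable reconstruction of the state of the art, not a proof, and the conjecture remains open in the paper exactly as you describe.
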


Note that if $H$ has a cycle, then $H$-free graphs are not $\chi$-bounded: Erd\H{o}s~\cite{erdos1959graph} proved there exist graphs with arbitrarily large girth and chromatic number. Note that such graphs have clique number 2.

The Gy\'{a}rf\'{a}s--Sumner conjecture is resolved only for very specific trees $H$~\cite{DBLP:journals/jct/ChudnovskyS14,kierstead1994radius,scott1997induced}.
In particular,  Gy\'{a}rf\'{a}s~\cite{gyarfas1985problems} proved that for every $t$, every $P_t$-free graph $G$ satisfies $\chi(G) \leq (t-1)^{\omega(G)-1}$, where $P_t$ denotes the path on $t$ vertices. This upper bound was subsequently improved by Gravier, Ho\`ang, and Maffray~\cite{gravier2003coloring} to $(t-2)^{\omega(G)-1}$.
On the other hand, the best known lower bound on the $\chi$-binding function for $P_t$-free graphs is $f(\omega) =\Omega((\omega/\log \omega)^{(t+1)/4})$, see the discussion in~\cite[Problem 12.2]{DBLP:journals/jgt/ScottS20}.
The question whether $P_t$-free graphs are \emph{polynomially $\chi$-bounded}, i.e., whether the $\chi$-binding function can be chosen to be a polynomial, remains a major open question in the area. It is not even known whether $P_5$-free graphs are polynomially $\chi$-bounded.
On the positive side, very recently Scott, Seymour, and Spirkl proved that $P_5$-free graphs are \emph{quasi-polynomially} $\chi$-bounded~\cite{ScottSS21}.

Interestingly, a weaker variant of the Gy\'{a}rf\'{a}s--Sumner conjecture appears to be true:
For every tree $H$, there is a function $f$ such that every $H$-free graph $G$ that does not contain an induced complete bipartite graph $K_{\ell,\ell}$ satisfies $\chi(G) \leq f(\omega(G),\ell)$~\cite{kierstead1994radius}.
However, the bound is superpolynomial both in terms of $\omega(G)$ and $\ell$.

In this paper, we pursue a similar direction and compare the chromatic number with the maximum size of a \emph{biclique} contained in $G$ (instead of the largest \emph{clique} as in $\chi$-boundedness).
More formally, we consider (superclasses of) $P_t$-free graphs $G$ that do not contain $K_{\ell,\ell}$ as a (not necessarily induced) subgraph and try to bound $\chi(G)$ by a function of $\ell$. 
Let us point out that excluding a large biclique as a subgraph is equivalent to excluding a large clique or a large induced biclique.
Indeed, if a graph has a clique of size at least $2\ell$ or an induced $K_{\ell,\ell}$, then it clearly contains $K_{\ell,\ell}$ as a subgraph.
On the other hand, if a graph contains a $K_{\ell,\ell}$ as a subgraph, then, by Ramsey's theorem, it contains either $K_s$ or an induced $K_{s,s}$,
where $s = \Omega(\log \ell)$. Note that $\ell$ is superpolynomial in $s$.

Actually, we consider a slightly more general setting, where instead of the chromatic number of a graph, we study its degeneracy.
A graph $G$ is \emph{$d$-degenerate} if every induced subgraph of $G$ contains a vertex of degree at most $d$. The \emph{degeneracy} of $G$ is the minimum $d$ for which $G$ is $d$-degenerate.
It is well known that every $d$-degenerate graph is $(d+1)$-colorable, so any upper bound on the degeneracy yields the same upper bound on the chromatic number, up to the additive term of~$1$.

Known results already imply that the degeneracy of $P_t$-free graphs which do not contain $K_{\ell,\ell}$ as a subgraph is bounded by some function of $t$ and $\ell$~\cite{DBLP:conf/swat/AtminasLR12,DBLP:journals/combinatorica/KuhnO04a}; 
we discuss these related results later in the introduction.
However, up to the best of our knowledge, none of the previously known bounds is polynomial in $\ell$.

As our first result, we give a short and elementary proof of the following.

\begin{theorem}\label{thm:p5}
Every $P_5$-free graph that does not contain $K_{\ell,\ell}$ as a subgraph has degeneracy $\Oh(\ell^3)$.
\end{theorem}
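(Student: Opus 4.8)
The plan is to establish the equivalent statement that every connected $P_5$-free graph $G$ with no $K_{\ell,\ell}$ subgraph has a vertex of degree $\Oh(\ell^3)$. This is equivalent to the theorem, because a graph has degeneracy at most $d$ precisely when every induced subgraph of it has a vertex of degree at most $d$, and every induced subgraph of $G$ — and hence each of its connected components — is again $P_5$-free with no $K_{\ell,\ell}$ subgraph. We may assume $\ell \ge 2$, so that $G$ has no clique on $2\ell$ vertices and no induced $K_{\ell,\ell}$ either; suppose for contradiction that $\delta(G) > c\,\ell^3$ for a sufficiently large absolute constant $c$.

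Let $u$ be a vertex of minimum degree and let $v$ be a suitably chosen second vertex (a neighbour of $u$, or — after invoking a Bacs\'o--Tuza-type structural result to ensure that few vertices escape $N[u]\cup N[v]$ — a vertex at distance $2$ from $u$); split $N(u)$ and $N(v)$ into the common part $W=N(u)\cap N(v)$ and the private parts $P_u=N(u)\setminus N[v]$, $P_v=N(v)\setminus N[u]$, and write $M=N(u)\cup N(v)$. The engine of the argument is the following consequence of $P_5$-freeness: for appropriate $p\in P_u$ and $q\in P_v$ (all adjacent, or all non-adjacent, such pairs, depending on whether $u\sim v$), the vertices $p,u,v,q$ induce a path with endpoints $p$ and $q$, so any neighbour of $p$ lying outside $M$ and non-adjacent to $q$ would complete it to an induced $P_5$; hence $N(p)\setminus M\subseteq N(q)$. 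Consequently, if some $p\in P_u$ has $\ge\ell$ neighbours outside $M$ and stands in this relation to $\ge\ell$ vertices $q\in P_v$, these two $\ell$-sets span a $K_{\ell,\ell}$; so for every $p\in P_u$ either all but $<\ell$ of its $\ge\delta(G)$ neighbours lie in $M$, or $p$ is adjacent to all but $<\ell$ of $P_v$. If the second alternative held for $\ell$ vertices of $P_u$ at once, those $\ell$ vertices would have $\ge |P_v|-\ell^2$ common neighbours in $P_v$, again giving a $K_{\ell,\ell}$ as soon as $|P_v|$ exceeds roughly $\ell^2$. Feeding this back (and doing the symmetric analysis for $P_v$ and an analogous clean-up for $W$ and for the few vertices outside $M$), one reaches the situation where all but $\Oh(\ell^2)$ vertices of $M$ have nearly all of their $\ge\delta(G)$ neighbours inside $M=\{u,v\}\cup W\cup P_u\cup P_v$; balancing $|W|,|P_u|,|P_v|$ against one another — each inequality a fresh instance of ``either a large biclique appears or the sets are nearly nested'' — forces one of these sets, say $W$, to induce a graph on $\Omega(\delta(G))$ vertices whose complement has maximum degree $\Oh(\ell^2)$. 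Such a graph contains a clique of size $\Omega(\delta(G)/\ell^2)=\Omega(c\,\ell)\ge 2\ell$, hence a $K_{\ell,\ell}$ — the desired contradiction. The exponent $3$ appears because the ``biclique-or-nesting'' alternative is chained a bounded number of times: once to turn $\ell$ vertices with $\ell$ exceptions each into an actual $K_{\ell,\ell}$, once to bound $|W|$ in terms of $\ell$, and once more in the almost-complete-clique step.

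The step I expect to be the main obstacle is keeping every error term linear (or at worst quadratic) in $\ell$ throughout, while still arriving at a cubic final bound. Each appeal to $K_{\ell,\ell}$-freeness only yields containments up to $\Oh(\ell)$ or $\Oh(\ell^2)$ slack, and if these slacks compounded across the several rounds — or, worse, along a long chain of nearly-nested neighbourhoods — the bound would degrade to something superpolynomial. The cleanest way I see to prevent this is to arrange, via the Bacs\'o--Tuza-type structure of $P_5$-free graphs, that almost no vertex of $G$ lies outside $N[u]\cup N[v]$ (so there is no ``far'' part invisible to the $P_5$-through-$u$-and-$v$ argument), and then to carry out the clean-up so that each of the finitely many rounds discards only an $\Oh(\ell^2)$-sized exceptional set and hands the next round a structure to which it applies verbatim.
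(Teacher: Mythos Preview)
Your proposal has a genuine gap at the decisive step. The local observations you make are correct: from an induced $P_4$ of the form $p$--$u$--$v$--$q$ (with $p\in P_u$, $q\in P_v$, $u\sim v$, $p\not\sim q$) one does get $N(p)\setminus M\subseteq N(q)$, and this does yield, via $K_{\ell,\ell}$-freeness, that all but $<\ell$ vertices of $P_u$ have $<\ell$ neighbours outside $M$ (and symmetrically). But all of this controls only edges leaving $M$. It says nothing about the internal structure of $W$, $P_u$, or $P_v$, and in particular gives no reason why the complement of $G[W]$ (or of $G[P_u]$, $G[P_v]$) should have maximum degree $\Oh(\ell^2)$. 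Your ``biclique-or-nesting'' dichotomy relates neighbourhoods of vertices in $P_u$ to vertices in $P_v$, not neighbourhoods within a single part to each other; the leap to ``one of the three parts is nearly a clique'' is unsupported. The ``analogous clean-up for $W$'' is likewise unspecified: a vertex $w\in W$ is adjacent to both $u$ and $v$, so the $P_4$ template $p$--$u$--$v$--$q$ is unavailable, and you would need a different configuration to say anything about $N(w)\setminus M$, let alone about non-edges inside $W$. Without this, the final Tur\'an step never gets off the ground.

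The paper's proof is structurally quite different and avoids this issue entirely. It does \emph{not} try to locate a large clique; instead it runs an induction on $k$ in the hypothesis ``no $K_{k,\ell}$ subgraph'' and, at each step, fixes a single vertex $r$, lets $A=N(r)$, $B=V(G)\setminus(A\cup\{r\})$, and shows that the set $U\subseteq A$ of vertices with at least $\ell^2$ neighbours in $B$ has size $\Oh(\ell^2)$. The bound on $|U|$ comes from \emph{explicitly building} the forbidden $P_5$: if $|U|$ were too large, one finds two non-adjacent $u_1,u_2\in U$ whose $B$-neighbourhoods are large even after removing the overlap, and then two non-adjacent $v_1\in B(u_1)\setminus B(u_2)$, $v_2\in B(u_2)\setminus B(u_1)$, so that $v_1$--$u_1$--$r$--$u_2$--$v_2$ is an induced $P_5$. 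The induction then applies to $G[A\setminus U]$, which excludes $K_{k-1,\ell}$. This is what produces the cubic bound cleanly: each of the $\ell$ inductive steps adds $\Oh(\ell^2)$ to the degree bound.
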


Actually, we show a more general result.
For $d \geq 2$, let $S'_d$ denote the $1$-subdivision of $K_{1,d}$, i.e., the graph obtained from a star with $d$ leaves by subdividing each edge once. 
Note that $P_5 = S'_2$, so Theorem~\ref{thm:p5} is a special case of the following result for $d=2$.

\begin{restatable}{theorem}{subdividedstar}
\label{thm:S'_d}
For any $d\geq 2$ and $\ell\geq 2$, every $S'_d$-free graph that does not contain $K_{\ell,\ell}$ as a subgraph has degeneracy $\Oh(\ell^{2d-1})$.
\end{restatable}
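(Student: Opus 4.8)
\medskip
\noindent\emph{Sketch of the intended approach.}
It suffices to prove the following minimum-degree version: there is a constant $C=C(d)$ such that every graph $H$ with no $K_{\ell,\ell}$ subgraph and with minimum degree $\delta(H)>C\ell^{2d-1}$ contains an induced copy of $S'_d$. This indeed implies the theorem: both ``no $K_{\ell,\ell}$ subgraph'' and ``no induced $S'_d$'' are inherited by induced subgraphs, and a graph in which every induced subgraph has a vertex of degree at most $k$ is $k$-degenerate; so applying the minimum-degree version to the densest induced subgraph of $G$ yields degeneracy $\Oh(\ell^{2d-1})$.

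I would prove this by induction on $d$. The base case $d=1$ is immediate: $S'_1=P_3$, and if $\delta(H)>C\ell\ge 2\ell$ then, as $H$ has no $K_{2\ell}$ subgraph, the neighbourhood of any vertex fails to be a clique, which produces an induced $P_3$. For the inductive step, apply the inductive hypothesis to $H$ itself (valid since the threshold for $d-1$ is no larger than that for $d$) to obtain an induced copy $T$ of $S'_{d-1}$, with center $c$, middle vertices $u_1,\dots,u_{d-1}$ and leaves $w_1,\dots,w_{d-1}$. The aim is to attach one more ``cherry'' $c-u_d-w_d$ so that the union is an induced $S'_d$: one needs $u_d\in N(c)$ distinct from and non-adjacent to every $u_i$ and $w_i$, and then $w_d\in N(u_d)$ lying outside $N[c]$ and again distinct from and non-adjacent to every $u_i$ and $w_i$. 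Since $|N(c)|>C\ell^{2d-1}$, a valid $u_d$ exists provided every $u_i$ and $w_i$ has co-degree $\lesssim\ell^{2d-1}$ with $c$; and once $u_d$ is chosen with sufficiently many neighbours outside $N[c]$, a valid $w_d$ exists provided, in addition, every $u_i$ and $w_i$ has small co-degree with $u_d$. Each of these two selections ``costs'' roughly a factor $\ell$ of the available room, which is what makes the threshold grow from $\ell^{2d-3}$ to $\ell^{2d-1}$.

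The crux — and the only real difficulty — is that in a $K_{\ell,\ell}$-subgraph-free graph two vertices may have an arbitrarily large common neighbourhood, so an arbitrary induced $S'_{d-1}$ need not be extendable: some middle or leaf might dominate most of $N(c)$, or every candidate for $u_d$ might have almost all its neighbours inside $N[c]$. The plan to circumvent this is to take $T$ not arbitrary but \emph{extremal} — say, among all induced copies of $S'_{d-1}$ in $H$, one minimizing a suitable potential, such as the sum of the degrees of its middles and leaves (or the sum of their co-degrees with $c$). If such an extremal $T$ still failed to extend, some structure vertex $x$ of $T$ would have a large common neighbourhood $W$ with another structure vertex; one would then reroute $T$ through a vertex of $W$, or of a large independent subset of $W$ (available since $H$ has no $K_{2\ell}$ and $|W|$ is large, via a Ramsey bound of the form $R(k,2\ell)=\Oh(\ell^{k-1})$), so as to obtain an induced $S'_{d-1}$ of strictly smaller potential, contradicting the choice of $T$. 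I expect the bulk of the work to lie in designing this potential and the rerouting so that all induced-ness constraints are preserved and every way in which extension can fail genuinely forces a decrease of the potential. One cautionary point that shapes the argument: the inductive hypothesis must be applied to $H$ globally and never to a neighbourhood $N(v)$, since iterating inside neighbourhoods would nest $\Theta(\ell)$ levels deep before a $K_{2\ell}$ obstruction appears, costing a factor $\ell^{\Theta(\ell)}$ and wrecking the polynomial dependence on $\ell$.
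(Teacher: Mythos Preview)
Your outline has a genuine gap at precisely the point you flag as ``the only real difficulty.'' You propose to take an induced copy $T$ of $S'_{d-1}$ minimizing some potential and argue that if it cannot be extended, a rerouting yields a copy of smaller potential. But you never specify the potential, and the obvious candidates do not seem to work. Concretely: suppose $u_1$ is adjacent to all but a few vertices of $N(c)$ (nothing in the $K_{\ell,\ell}$-freeness prevents one pair $c,u_1$ from having an arbitrarily large common neighbourhood). Your plan is to replace $u_1$ by some $u_1'\in N(c)$ of smaller degree, still non-adjacent to $u_2,\dots,u_{d-1},w_2,\dots,w_{d-1}$, and still equipped with a suitable leaf $w_1'$. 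Why should such a $u_1'$ exist? Every low-degree vertex of $N(c)$ might be adjacent to $u_2$, say, and there is no Ramsey- or $K_{\ell,\ell}$-based reason to rule this out. Your ``factor $\ell$ per selection'' accounting is similarly unsupported: $K_{\ell,\ell}$-freeness bounds the number of vertices with \emph{few non-neighbours} in a large set (at most $\ell-1$ such vertices), not the co-degree of a single vertex. So the induction-on-$d$ scheme, as stated, is a hope rather than an argument.

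The paper's proof avoids this obstacle entirely by a different induction: on $k$ in the excluded $K_{k,\ell}$, not on $d$. One fixes a vertex $r$, lets $A=N(r)$ and $B=V\setminus(A\cup\{r\})$, and defines $U=\{u\in A:|N(u)\cap B|\ge \ell^{2d-2}\}$. The heart of the proof is to show $|U|<\ell^{d-1}+(d-1)\ell^d$; otherwise one constructs, directly, an independent set $\{u_1,\dots,u_d\}\subseteq U$ such that each $u_i$ has at least $\ell^{d-1}$ \emph{private} neighbours in $B$ (neighbours not seen by any other $u_j$), and then an independent transversal of these private sets (Lemma~\ref{lem:independent-set}) completes an induced $S'_d$ with centre $r$. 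The private-neighbour condition is secured by an iterative trick: at step $j$ one picks $u_j$ among the $\ell$ candidates with the \emph{smallest} value of $|B(u)\setminus B(u_1,\dots,u_{j-1})|$, which guarantees (via Lemma~\ref{lem:non-neighbors}) that the later $u_{j+1}$ also has many private neighbours relative to $u_1,\dots,u_j$. Once $|U|$ is bounded, one observes that $G[A\setminus U]$ is $K_{k-1,\ell}$-free and applies the inductive hypothesis there to find a vertex of small degree in $G$. This builds the whole $S'_d$ in one shot and never needs to extend a smaller spider, which is exactly the step your sketch cannot complete.
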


Next we consider $P_t$-free graphs for any fixed $t$.
In fact, we again work in a more general setting: we study $C_{\geq t}$-free graphs, i.e., graphs that do not contain any induced cycle with at least $t$ vertices.
Clearly $P_{t-1}$-free graphs are in particular $C_{\geq t}$-free, but on the other hand $C_{\geq t}$-free graphs may have arbitrarily long induced paths.
Already in 1985, Gy\'{a}rf\'{a}s~\cite{gyarfas1985problems} conjectured that $C_{\geq t}$-free graphs are $\chi$-bounded, and this conjecture was confirmed after more than 30 years by Chudnovsky, Scott, and Seymour~\cite{ChudnovskySS17}.
Let us point out that the bound on the chromatic number obtained by the authors is superpolynomial in the clique number.

As the main result of the paper, we show the following bound.

\begin{restatable}{theorem}{mainthm}
\label{thm:main}
There is a function $f\colon \N\to \N$ such that every $C_{\geq t}$-free graph that does not contain $K_{\ell,\ell}$ as a subgraph has degeneracy at most $\ell^{f(t)}$.
\end{restatable}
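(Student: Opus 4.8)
The plan is to follow an amplification strategy: first show that excluding long induced cycles plus a large biclique subgraph forces bounded degeneracy through a sequence of reductions, each reducing the structural complexity at the cost of polynomial blow-up in $\ell$. The natural starting point is to bring in the Gy\'{a}rf\'{a}s-path / Chudnovsky--Scott--Seymour machinery for $C_{\geq t}$-free graphs, but in a quantitative form. The key observation is that in a $C_{\geq t}$-free graph, if we fix a BFS layering from some vertex, then within each ball of bounded radius the graph has bounded ``independence-like'' structure; more precisely, one wants to extract a subgraph of large minimum degree (contradicting the desired degeneracy bound) and then find inside it a long induced path or a large biclique, contradicting $C_{\geq t}$-freeness or the $K_{\ell,\ell}$-freeness respectively.

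First I would reduce to the case of graphs of large minimum degree: if $G$ has degeneracy greater than $\ell^{f(t)}$, pass to an induced subgraph $H$ with minimum degree greater than $\ell^{f(t)}$, and it suffices to derive a contradiction. Second, I would fix a vertex $v$ and consider the BFS layers $L_0=\{v\}, L_1, L_2, \dots$ around it. Since $H$ has large minimum degree, a counting/pigeonhole argument shows that many layers are ``thick'' and, crucially, that some layer $L_i$ together with the ball structure below it supports a large highly-connected piece. Third, the heart of the argument is a Gy\'{a}rf\'{a}s-path-type lemma: in a $C_{\geq t}$-free graph, one can build a long induced path by greedily extending, and any attempt to extend while avoiding too many ``anticomplete'' choices either succeeds (giving $P_{t-1}$, impossible here since $P_{t-1}$-free implies $C_{\geq t}$-free but not conversely — so instead we need the genuine $C_{\geq t}$-free version) or creates a long induced cycle. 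I would adapt the Chudnovsky--Scott--Seymour structure theorem for $C_{\geq t}$-free graphs, which says roughly that such a graph either has bounded ``radius-type'' structure or contains a large clique or a specific pattern — but track the dependence on $\ell$ carefully so that each branching only costs $\mathrm{poly}(\ell)$.

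Concretely, the recursive scheme I would set up is: define $\delta(t)$ to be the smallest exponent such that every $C_{\geq t}$-free, $K_{\ell,\ell}$-subgraph-free graph has degeneracy at most $\ell^{\delta(t)}$; we know $\delta(5) = O(1)$ by Theorem~\ref{thm:p5} (since $P_5 = S'_2$ is $C_{\geq 4}$-free... actually one must be careful, $C_{\geq t}$-free is weaker, so the base case needs the $S'_d$ result combined with the observation that in a $C_{\geq t}$-free graph of large min degree one can find a $C_{\geq t}$ or pass to a subdivided-star-free situation). The inductive step would take a $C_{\geq t}$-free graph of min degree $\ell^{D}$ for large $D = D(t)$, use the large min degree plus $K_{\ell,\ell}$-freeness to find within a bounded-radius ball a structured subgraph that is $C_{\geq t'}$-free for some $t' < t$ (for instance by showing long induced paths localize), and apply induction with the smaller parameter $t'$ — the $K_{\ell,\ell}$-freeness is preserved since we only take subgraphs. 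Summing the polynomial losses over the at most $t$ levels of recursion yields the claimed bound $\ell^{f(t)}$ with $f(t)$ a tower-free (indeed polynomial-in-$t$ after optimization, though the statement only claims \emph{some} function $f$) exponent.

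The main obstacle I anticipate is the inductive reduction from $C_{\geq t}$-free to $C_{\geq t'}$-free with a smaller $t'$ while keeping the degree bound polynomial in $\ell$: it is not a priori clear that inside a $C_{\geq t}$-free graph of large minimum degree one finds a large piece that excludes \emph{shorter} induced cycles. The resolution is likely to avoid exact induction on $t$ and instead argue directly: in a $C_{\geq t}$-free graph $H$ of minimum degree $d$, take any vertex $v$ and a shortest-path tree; if $d$ is super-polynomial in $\ell$, then using that shortest paths have no long chords (a shortest path of length $\geq t$ would, with its endpoints' neighborhoods, typically create a long induced cycle or a biclique), one confines $H$ to a ball of radius $O_t(\log \ell)$, so $|V(H)| \leq d^{O_t(\log \ell)}$; but also $|V(H)| \geq d$, which is not yet a contradiction, so one needs the sharper statement that the ball of radius $r$ around $v$ has size at most $\mathrm{poly}_{t,r}(\ell) \cdot (\text{growth})$, combined with a clique-cutset or balanced-separator decomposition (again à la Chudnovsky--Scott--Seymour) whose bags are $P_{t-1}$-free or have bounded radius, on which the elementary Theorem~\ref{thm:S'_d}-style bound applies. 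Getting all the polynomial dependencies to line up through this decomposition — ensuring bags are small and the adhesions contribute only polynomially to the degree — is where the real work lies.
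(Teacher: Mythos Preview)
Your proposal is not a proof but a sketch of several possible strategies, none of which you carry through, and you yourself flag the central gap: there is no mechanism for reducing a $C_{\geq t}$-free graph of large minimum degree to a $C_{\geq t'}$-free subgraph with $t'<t$ while retaining large minimum degree. That reduction simply does not exist in general --- excluding long induced cycles says nothing about shorter ones --- so an induction on $t$ along these lines cannot be made to work. Your alternative line, bounding the radius via BFS and hoping for a size contradiction, also stalls exactly where you say it does: $|V(H)|\leq d^{O_t(\log\ell)}$ against $|V(H)|\geq d$ is vacuous, and invoking unspecified ``clique-cutset or balanced-separator decompositions \`a la Chudnovsky--Scott--Seymour'' with ``bags that are $P_{t-1}$-free'' is not a step one can actually take (no such decomposition with polynomial control in $\ell$ is available, and $C_{\geq t}$-free graphs need not have $P_{t-1}$-free pieces).

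The paper's argument is structurally quite different and does not induct on $t$. It goes: large degeneracy $\Rightarrow$ large clique minor (Kostochka--Thomason); in a $C_{\geq t}$-free graph a large clique minor can be refined so that each branch set has small radius and contains a \emph{full} vertex adjacent to all other branch sets; from the full vertices one extracts (via the Fox--Sudakov weak Erd\H{o}s--Hajnal theorem) a large independent set $A'$, and from the branch sets one extracts, for each pair $u,v\in A'$, many short induced $u$--$v$ connector paths; a VC-dimension / Sauer--Shelah argument (the set system of neighbourhoods in $A'$ has VC-dimension $<t/2$ because otherwise one immediately builds an induced $C_t$) controls how these connectors interact with $A'$ and with each other; finally one selects $t/2$ vertices of $A'$ and one connector per consecutive pair so that the concatenation is an induced cycle of length $\geq t$, a contradiction. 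The three ingredients you are missing are the passage through clique minors, the use of Fox--Sudakov to repeatedly extract polynomial-size independent sets, and the VC-dimension bound coming directly from $C_t$-freeness.
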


Since the chromatic number of a graph is bounded by its degeneracy plus one, we conclude the following.

\begin{corollary}\label{coro:main}
There is a function $f\colon \N\to \N$ such that every   $C_{\geq t}$-free graph that does not contain $K_{\ell,\ell}$ as a subgraph has chromatic number at most $\ell^{f(t)}+1$.
\end{corollary}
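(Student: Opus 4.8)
\textbf{Proof proposal for Corollary~\ref{coro:main}.}
The plan is to derive the corollary directly from Theorem~\ref{thm:main} using the standard, one-line relationship between degeneracy and chromatic number. Concretely, I would first recall the elementary fact, mentioned in the introduction, that every $d$-degenerate graph admits a proper $(d+1)$-coloring: repeatedly remove a vertex of degree at most $d$ to obtain an ordering $v_1,\dots,v_n$ of the vertices in which each $v_i$ has at most $d$ neighbors among $v_1,\dots,v_{i-1}$, then color the vertices greedily in the order $v_n,\dots,v_1$, at each step having at most $d$ forbidden colors available and hence being able to use one of the $d+1$ colors. Thus $\chi(G)\le \mathrm{degeneracy}(G)+1$ for every graph $G$.

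Now let $f\colon\N\to\N$ be the function provided by Theorem~\ref{thm:main}, and let $G$ be a $C_{\geq t}$-free graph that does not contain $K_{\ell,\ell}$ as a subgraph. By Theorem~\ref{thm:main} we have $\mathrm{degeneracy}(G)\le \ell^{f(t)}$, and therefore
\[
\chi(G)\le \mathrm{degeneracy}(G)+1\le \ell^{f(t)}+1,
\]
which is exactly the claimed bound, with the same function $f$ as in Theorem~\ref{thm:main}. This completes the proof.

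There is essentially no obstacle here: the only content is the inequality $\chi\le\mathrm{degeneracy}+1$, which is classical, plus a direct substitution of the bound from Theorem~\ref{thm:main}. The one point worth a sentence of care is to note that the function $f$ in the corollary can be taken to be literally the same function as in Theorem~\ref{thm:main}, so no new quantifier juggling is needed; the $+1$ is absorbed into the statement as written rather than into $f$.
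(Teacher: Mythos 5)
Your proposal is correct and matches the paper's own derivation: Corollary~\ref{coro:main} is obtained there exactly by combining Theorem~\ref{thm:main} with the classical fact that a $d$-degenerate graph is $(d+1)$-colorable, so $\chi(G)\le \ell^{f(t)}+1$ with the same function $f$. No issues.
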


Very recently, Gartland \emph{et al.}~\cite{deg2tw} showed that every  $C_{\geq t}$-free graph with degeneracy at most $d$ has treewidth bounded by $(dt)^{\Oh(t)}$ (note that in general, treewidth cannot be bounded by a function of degeneracy; e.g., $3$-regular expanders have degeneracy $3$ and treewidth linear in the number of vertices~\cite{grohemarx}).
By combining this result with Theorem~\ref{thm:main}, we immediately obtain the following structural corollary.

\begin{corollary}\label{coro:tw}
There is a function $h\colon \N\to \N$ such that every   $C_{\geq t}$-free graph that does not contain $K_{\ell,\ell}$ as a subgraph has treewidth at most $\ell^{h(t)}$.
\end{corollary}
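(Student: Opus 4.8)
The plan is to obtain Corollary~\ref{coro:tw} as a short composition of Theorem~\ref{thm:main} with the external degeneracy-to-treewidth transfer theorem of Gartland \emph{et al.}~\cite{deg2tw}. Fix $t$ and let $G$ be a $C_{\geq t}$-free graph that does not contain $K_{\ell,\ell}$ as a subgraph. First I would apply Theorem~\ref{thm:main}, obtaining the function $f\colon\N\to\N$ it provides, so that $G$ has degeneracy at most $d:=\ell^{f(t)}$ (note that $f$ depends only on $t$, not on $\ell$ or $G$).

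Second, since $G$ is $C_{\geq t}$-free and has degeneracy at most $d$, the result of~\cite{deg2tw} gives that the treewidth of $G$ is at most $(dt)^{\Oh(t)}$; write this bound as $(dt)^{\alpha t}$ for a suitable constant $\alpha$. Substituting $d=\ell^{f(t)}$ and using the crude estimate $t\le 2^t\le \ell^t$, valid for all $\ell\geq 2$, we get
\[
\tw(G)\;\le\;\big(\ell^{f(t)}\cdot t\big)^{\alpha t}\;\le\;\big(\ell^{f(t)}\cdot \ell^{t}\big)^{\alpha t}\;=\;\ell^{\,\alpha t\,(f(t)+t)}.
\]
Hence it suffices to set $h(t):=\lceil \alpha t\,(f(t)+t)\rceil$, and the conclusion $\tw(G)\le \ell^{h(t)}$ follows for every $\ell\geq 2$ (the case $\ell\leq 1$ being degenerate).

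The only point to get right is that both ingredients speak about exactly the same class — $C_{\geq t}$-free graphs, a fixed hereditary class once $t$ is chosen, which crucially does not depend on $\ell$ — so there is no mismatch in applying them in sequence to the same graph $G$; the exclusion of $K_{\ell,\ell}$ enters only through Theorem~\ref{thm:main}. I do not expect any genuine obstacle in this proof: all the real difficulty is already carried by Theorem~\ref{thm:main} (the bulk of this paper) and by the cited transfer theorem of~\cite{deg2tw}, and what remains here is the elementary bookkeeping of folding the $t$-dependent factors into the exponent of $\ell$.
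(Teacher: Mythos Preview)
Your proposal is correct and follows exactly the approach indicated in the paper: the corollary is stated there as an immediate consequence of combining Theorem~\ref{thm:main} with the degeneracy-to-treewidth transfer of Gartland \emph{et al.}~\cite{deg2tw}, and the paper does not spell out any further argument. Your explicit bookkeeping to absorb the $t$-dependent factors into the exponent of $\ell$ is a faithful elaboration of what the paper leaves implicit.
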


Actually, in $P_t$-free graphs we can strengthen Corollary~\ref{coro:tw} by considering the parameter \emph{treedepth}, which is bounded from below by the treewidth, but can be much larger in general.
The treedepth of a graph $G$ is the minimum height of a rooted forest on the vertex set $V(G)$ with the property that every pair of vertices adjacent in $G$ is in the ancestor-descendant relation.
For the motivation and several equivalent definitions of this parameter, we refer the reader to the monograph by Ne\v{s}et\v{r}il and Ossona de Mendez~\cite[Section~6.4]{sparsity}.

\begin{theorem}\label{thm:td}
There is a function $h\colon \N\to \N$ such that every $P_t$-free graph that does not contain $K_{\ell,\ell}$ as a subgraph has treedepth at most $\ell^{h(t)}$.
\end{theorem}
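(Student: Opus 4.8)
The plan is to bootstrap from Theorem~\ref{thm:main} (or rather its specialization to $P_t$-free graphs) together with the characterization of bounded treedepth in terms of excluded long paths. Recall that a graph has treedepth at most $k$ if and only if it contains no path on $2^k$ vertices as a \emph{subgraph} (not necessarily induced); more precisely, treedepth is sandwiched between $\log_2(\text{longest path})$ and $(\text{longest path})$. So it suffices to show that every $P_t$-free graph with no $K_{\ell,\ell}$ subgraph has \emph{no long path as a subgraph}, with the length bound polynomial in $\ell$ (for fixed $t$); then $\td(G) \le (\text{longest path length}) \le \ell^{h(t)}$ gives the theorem directly, without even needing the logarithm.

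First I would observe that a $P_t$-free graph can certainly contain long paths as non-induced subgraphs, so we cannot argue purely from $P_t$-freeness; the $K_{\ell,\ell}$-freeness must be used to control how ``wide'' such a path can spread. The key step is a Ramsey-type / greedy contraction argument: take a longest path $P = v_1 v_2 \dots v_m$ in $G$ (as a subgraph). We want to show $m \le \ell^{O(t)}$. The strategy is to partition $P$ into $t$ consecutive blocks $B_1, \dots, B_{t}$ of roughly equal size $m/t$ each, and show that if $m$ is too large then we can find one vertex in each block such that these $t$ vertices, in block order, form an \emph{induced} $P_t$ — contradicting $P_t$-freeness. To find such vertices we iteratively pick them from the blocks while maintaining the property that the chosen prefix is induced and, crucially, that a large fraction of each remaining block still has the ``correct'' adjacency pattern to the chosen prefix. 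This is where $K_{\ell,\ell}$-freeness enters: whenever a chosen vertex $v$ has many neighbours in a later block, those neighbours together with an appropriate set would either build a $K_{\ell,\ell}$ (forbidden) or we recurse into a smaller sub-block; the Kővári–Sós–Turán bound or a direct counting argument caps how often we can lose a $1 - 1/\mathrm{poly}(\ell)$ fraction. Since we only perform $t-1$ rounds of refinement, after each round the surviving portion of each block shrinks by at most a $\mathrm{poly}(\ell)$ factor, so starting from $m/t$ and requiring one survivor at the end forces $m \le t \cdot \ell^{O(t)} = \ell^{O(t)}$.

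The main obstacle, and the step that needs the most care, is exactly the refinement lemma: given a set $S$ of already-chosen vertices forming an induced path and a consecutive block $B$ of the remaining path with $|B|$ large, one must extract from $B$ a still-large consecutive (or at least structured) sub-block $B'$ and designate a vertex $u$ to be added to $S$, such that $u$ is adjacent to the last vertex of $S$, non-adjacent to all earlier vertices of $S$, and every vertex of $B'$ is non-adjacent to $u$ and to all of $S$ except possibly the relevant endpoint. Controlling the ``non-adjacent to $u$'' part is what forces us to delete the high-degree vertices, and bounding the number of such deletions by $\mathrm{poly}(\ell)$ is precisely an instance of ``a bipartite graph with no $K_{\ell,\ell}$ has bounded average degree'' applied along the path; one has to be slightly clever because the path structure must be preserved, so rather than deleting arbitrary vertices one deletes a bounded-length prefix/suffix or passes to a sub-path, using that any vertex of $P$ has at most $2$ neighbours \emph{on} $P$ but possibly many neighbours among later blocks of $P$ — it is the latter that the biclique bound kills. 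I would also double-check the base case $t \le 3$ separately ($P_3$-free means disjoint union of cliques, $P_2$-free means edgeless), since the blocking argument degenerates there. Finally, with the longest-path bound in hand, invoking the elementary inequality $\td(G) \le \text{(number of vertices on a longest path)}$ closes the proof; one does not even need the treewidth detour of Corollary~\ref{coro:tw}.
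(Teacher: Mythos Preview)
Your plan has a genuine gap at the heart of the ``refinement lemma''. You need the chosen vertices $u_i\in B_i$ and $u_{i+1}\in B_{i+1}$ to be \emph{adjacent}, but nothing in your setup guarantees this. The $K_{\ell,\ell}$-free hypothesis is tailor-made for controlling \emph{non}-adjacency: via Lemma~\ref{lem:non-neighbors}-style counting you can indeed ensure that each already-chosen $u_j$ has a $(1-1/\ell)$-fraction of non-neighbours in any later block, so the non-chord conditions are fine. But it gives you no leverage whatsoever for \emph{forcing} an edge. The only edge you are guaranteed between $B_i$ and $B_{i+1}$ is the single path edge at the block boundary; once your refinement passes to a sub-block $B_i'\subsetneq B_i$, that boundary vertex may be gone, and there is no reason any surviving vertex of $B_i'$ has a neighbour in the surviving $B_{i+1}'$. (Note that a single vertex having huge degree into the next block is perfectly consistent with $K_{\ell,\ell}$-freeness --- think of a star --- so K\H{o}v\'ari--S\'os--Tur\'an does not rescue you here.) Your sketch explicitly assumes ``$u$ is adjacent to the last vertex of $S$'' without saying where this adjacency comes from; that is the missing idea.

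There is also a second, more structural issue: what you are trying to prove --- that the longest (not necessarily induced) path has at most $\ell^{c(t)}$ vertices --- is \emph{strictly stronger} than Theorem~\ref{thm:td}, since treedepth only bounds the longest path exponentially from above. The paper does \emph{not} attempt this and instead does go through the treewidth detour you dismiss: it uses Corollary~\ref{coro:tw} to get $\tw(G)\le\ell^{g(t)}$, and then proves (Lemma~\ref{lem:Pt-td-tw}) that every $P_t$-free graph satisfies $\td(G)\le(\tw(G)+1)^{t-1}$, via the identities $\td=\wcol_\infty=\wcol_{t-1}$ and $\tw+1=\scol_\infty=\scol_{t-1}$ in $P_t$-free graphs together with the general bound $\wcol_r\le\scol_r^r$. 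So even a fully correct version of your argument would be proving more than what is asked, and the paper's proof does not confirm that this stronger statement holds with a polynomial bound.
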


Let us point out that such a strengthening is not possible for $C_{\geq t}$-free graphs, as the treedepth of an $n$-vertex path is $\lceil \log(n+1) \rceil$.

Before we proceed to the proofs, we discuss some connections of our results with other known results in graph theory.

\paragraph{Connection 1: Structural results about $P_t$-free and  $C_{\geq t}$-free graphs excluding a fixed biclique.}
As we already pointed out, the influence of the size of a largest complete bipartite subgraph on the structure of graphs with some forbidden induced subgraphs was already studied.
Atminas, Lozin, and Razgon~\cite{DBLP:conf/swat/AtminasLR12} proved that every graph that contains a long (non-necessarily induced) path contains a long induced path or a large biclique as a subgraph. As a corollary to this result, the authors showed that every $P_t$-free graph which does not contain $K_{\ell,\ell}$ as a subgraph has treewidth bounded by a function of $t$ and $\ell$.
The result is proved by a series of applications of Ramsey's theorem and the obtained bound is superpolynomial in $\ell$.
An analogous result for $C_{\geq t}$-free graphs was shown by Wei{\ss}auer~\cite{DBLP:journals/jct/Weissauer19}.
Since the degeneracy is bounded from above by the treewidth plus one, an upper bound on the treewidth implies an upper bound on the degeneracy.

Another result of similar flavor was obtained by K\"uhn and Osthus~\cite{DBLP:journals/combinatorica/KuhnO04a}.
For a fixed graph $H$, we say that $G$ is \emph{$H$-subdivision-free} if it does not contain any subdivision of $H$ as an induced subgraph.
K\"uhn and Osthus~\cite{DBLP:journals/combinatorica/KuhnO04a} proved that for every $H$, graphs that are $H$-subdivision-free and do not contain $K_{\ell,\ell}$ as a subgraph have degeneracy bounded by some function $d(H,\ell)$ of $H$ and $\ell$.
Since the $P_t$-subdivision-free graphs are precisely the $P_t$-free graphs and $C_t$-subdivision-free graphs are precisely  $C_{\geq t}$-free graphs,
the statement above is very close to the statement of our Theorem~\ref{thm:main}.
However, the authors were more concerned about the dependence of $d(H,\ell)$ on the number of vertices of $H$, and their bound is superpolynomial in terms of $\ell$.

\paragraph{Connection 2: Erd\H{o}s--Hajnal conjecture.}
As a corollary to Ramsey's theorem~\cite{ramsey2009problem} (see also Erd\H{o}s and Szekeres~\cite{erdos1935combinatorial}), we obtain that every graph with $n$ vertices contains a clique or an independent set of size $\Omega(\log n)$. Actually, an even stronger statement is true: for almost every graph, the maximum sizes of a clique and of an independent set are of order $\Theta(\log n)$.
However, for some natural graph classes, the bound can be significantly improved.
Erd\H{o}s and Hajnal~\cite{DBLP:journals/dam/ErdosH89} proved that for every fixed graph $H$, there is $\epsilon_H>0$ such that every $H$-free graph with $n$ vertices contains a clique or an independent set of size at least $2^{\epsilon_H \sqrt{\log n}}$.
They also conjectured that a \emph{polynomial} lower bound should hold.

\begin{conjecture}[Erd\H{o}s--Hajnal]
For every graph $H$, there exists $\epsilon_H >0$ such that every $H$-free graph contains an independent set of size at least $n^{\epsilon_H}$ or a clique of size at least $n^{\epsilon_H}$.
\end{conjecture}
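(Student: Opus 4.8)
\medskip
\noindent\textbf{A proposed line of attack.}
Settling this conjecture is one of the central open problems of the area, so what follows is a plan of attack rather than a proof. The first step is the classical reduction to \emph{prime} graphs: by modular decomposition every graph is obtained from prime graphs, $K_1$, $K_2$ and $\overline{K_2}$ by repeated substitution, and the substitution lemma of Alon, Pach and Solymosi shows that the class of $H$-free graphs has the Erd\H{o}s--Hajnal property whenever $H$ arises by substituting a graph $H_1$ into a vertex of a graph $H_0$ such that the classes of $H_0$-free graphs and of $H_1$-free graphs both have it. Iterating, it suffices to prove the conjecture for all prime graphs $H$; establishing it for even one new infinite family of prime graphs would already be of interest.

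For a fixed prime $H$, I would pursue two complementary levers. The first applies when $H$ is a forest: here it would be enough to prove that the class of $H$-free graphs is \emph{polynomially} $\chi$-bounded, say $\chi(G')\le\omega(G')^{c}+c$. Indeed, for $H$-free $G$ on $n$ vertices one has $n/\alpha(G)\le\chi(G)\le\omega(G)^{c}+c$, so either $\alpha(G)\ge n^{1/2}$ or $\omega(G)\ge\Omega(n^{1/(2c)})$, giving the Erd\H{o}s--Hajnal property with $\epsilon_H=\Omega(1/c)$. This ties the conjecture for forests directly to the polynomial version of the Gy\'arf\'as--Sumner conjecture (Conjecture~\ref{GSconjecture}), and it is precisely here that the results of the present paper contribute: Theorem~\ref{thm:main} (and Theorem~\ref{thm:S'_d}) yield polynomial degeneracy---hence polynomial $\chi$-boundedness---for $C_{\ge t}$-free and $P_t$-free graphs \emph{once a large biclique is excluded as a subgraph}, so the residual task is to dispense with that hypothesis. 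The second lever, for prime $H$ in general, is the \emph{strong Erd\H{o}s--Hajnal property}: a constant $c=c(H)>0$ such that every $H$-free graph on $n\ge2$ vertices contains disjoint sets $A,B$ with $\size{A},\size{B}\ge cn$ and $A$ complete or anticomplete to $B$. When available this yields a polynomial bound by a standard recursion---one realises, as an induced subgraph of $G$, the cograph on the leaves of the recursion tree, and uses that cographs are perfect---and structural machinery of the Atminas--Lozin--Razgon and K\"uhn--Osthus type, together with the results proved here, is the natural source of such pure pairs.

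The part I expect to be the genuine obstruction is exactly why the conjecture has resisted for four decades: both levers, as stated, merely reduce it to other open problems. Polynomial $\chi$-boundedness is wide open for forests---indeed even for $P_5$-free graphs, as emphasised above---so the first lever is not presently actionable. And the strong Erd\H{o}s--Hajnal property simply \emph{fails} for many graphs $H$ that do satisfy the ordinary property: already $K_3$-free graphs contain no linear-sized pure pair (witnessed by triangle-free graphs of girth at least five that are expanders) yet trivially have the Erd\H{o}s--Hajnal property, since $\omega\le2$ and $\alpha=\Omega(\sqrt{n\log n})$, and even cographs, though perfect, contain no balanced pure pair (threshold graphs, being $C_4$-free and $2K_2$-free). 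One is therefore forced into an \emph{iterated} or \emph{local} version of the pure-pair method, in which complete-or-anticomplete pairs (or clean separations) surface only after a bounded-depth structural decomposition and the loss is amortised level by level, all the while contending with the basic asymmetry between cliques and stable sets: a recursion that returns ``a clique \emph{or} a stable set'' on each piece has no mechanism forcing the pieces to return compatible objects that can be glued across a complete bipartite pattern. Making such a scheme work for prime graphs is exactly where every current attempt stalls---it succeeds only for a short list ($P_4$ trivially, the bull, $C_5$, and a handful more)---and I do not expect a short route around it.
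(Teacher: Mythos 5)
The statement you were asked to prove is the Erd\H{o}s--Hajnal conjecture itself, which the paper states only as background and does not prove: it is a long-standing open problem, so there is no proof in the paper to compare against, and your submission---as you say explicitly---is a programme, not a proof. In that strongest sense the gap is total: no step of your text establishes the statement, and both levers you describe are reductions to problems that are themselves open (polynomial $\chi$-boundedness, even for $P_5$) or known to fail in general (linear pure pairs). That said, your survey of the landscape is essentially accurate and matches the paper's framing: the Alon--Pach--Solymosi substitution argument does reduce the conjecture to prime $H$; polynomial $\chi$-boundedness of the class of $H$-free graphs does yield the Erd\H{o}s--Hajnal property for $H$ via $n/\alpha(G)\le\chi(G)\le\omega(G)^c+c$, which is precisely the remark made in the introduction; and the failure of the strong Erd\H{o}s--Hajnal property for $K_3$-free graphs, witnessed by triangle-free expanders of girth at least five, is correctly invoked. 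You are also right that Theorems~\ref{thm:S'_d} and~\ref{thm:main} do not touch the conjecture directly: they require excluding $K_{\ell,\ell}$ as a subgraph, and the biclique-relaxed analogue of Erd\H{o}s--Hajnal is already Theorem~\ref{thm:FoxS} (Fox--Sudakov), so ``dispensing with that hypothesis'' is exactly the original open problem, not a residual technicality.

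One concrete factual slip should be removed: your parenthetical claim that cographs, e.g.\ threshold graphs, contain no balanced pure pair is false. In any threshold graph the vertices added as dominating form a clique and the vertices added as isolated form an independent set, so one of the two has at least $n/2$ vertices; splitting that set into two halves gives a complete (or anticomplete) pair with both sides of linear size. Being $C_4$-free and $2K_2$-free only obstructs pure pairs whose sides are not themselves cliques or independent sets, so the example does not show what you want (the $K_3$-free example alone suffices for your point). With that correction, your text stands as a reasonable account of why the conjecture remains open, but it should not be presented as progress toward a proof.
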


We remark that for every fixed $H$, almost no graphs are $H$-free.
The Erd\H{o}s--Hajnal conjecture can be seen as a weakening of polynomial $\chi$-boundedness since if we can prove that $H$-free graphs are polynomially $\chi$-bounded, then in particular the graph $H$ satisfies the Erd\H{o}s--Hajnal conjecture.

Despite considerable interest received by the conjecture in the last decades, we still know very little.
The conjecture is known to hold if $H$ is a cograph~\cite{DBLP:journals/dam/ErdosH89}, has at most four vertices~\cite{DBLP:journals/combinatorica/AlonPS01},  is the five-vertex graph called the \emph{bull}~\cite{DBLP:journals/jct/ChudnovskyS08d}, or a $C_5$~\cite{ChudnovskySSS21}. The conjecture is in particular open for $P_5$-free graphs.
For more information, we refer the reader to the survey by Chudnovsky~\cite{DBLP:journals/jgt/Chudnovsky14}.

Interestingly, Fox and Sudakov~\cite{DBLP:journals/combinatorica/FoxS09} proved a version of the Erd\H{o}s--Hajnal conjecture, 
which is weakened in a similar spirit as our Theorem~\ref{thm:main} weakens the notion of polynomial $\chi$-boundedness.

\begin{theorem}[Fox, Sudakov~\cite{DBLP:journals/combinatorica/FoxS09}]\label{thm:FoxS}
For every graph $H$, there exists $\epsilon_H >0$ such that every $H$-free graph contains an independent set of size $n^{\epsilon_H}$ or a complete bipartite subgraph where each side has size at least $n^{\epsilon_H}$.
\end{theorem}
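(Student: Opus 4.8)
The plan is to show the following, from which the theorem is immediate: if $H$ is a fixed graph with $h:=|V(H)|$ vertices and the $H$-free graph $G$ on $n$ vertices has no independent set of size $n^{\epsilon}$, then $G$ contains $K_{m,m}$ as a subgraph for some $m\ge n^{\epsilon}$, where $\epsilon=\epsilon(h)>0$ is fixed at the end. First record that $G$ has a $K_{m,m}$ subgraph if and only if it has $m$ distinct vertices with at least $m$ common neighbours, so it suffices to produce such a ``codegree witness'' of polynomial size. The basic engine is \emph{dependent random choice}: a graph with $\Omega(\eta n^2)$ edges contains a set $U$ of $\eta^{O(r)}n$ vertices all of whose $r$-element subsets have many common neighbours, and with $r=m$ this $U$ is the desired witness once it is large enough. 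A short computation shows, however, that density alone gives only $m=O(\log n/\log(1/\eta))$, hence a polynomial witness only when $\eta=1-o(1)$ -- in which case $G$ is almost complete, already has a polynomial-size clique, and we are trivially done. So density is not enough, and $H$-freeness must be used structurally.

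The mechanism is an induction on $h$ proving the stronger statement that \emph{every} $n$-vertex graph contains an induced copy of $H$, or an independent set of size $n^{\epsilon_h}$, or a $K_{m,m}$ subgraph with $m\ge n^{\epsilon_h}$; for $H$-free $G$ this rules out the first outcome. In the inductive step one tries to build an induced copy of $H$, placing vertices $x_1,\dots,x_h$ for $v_1,\dots,v_h\in V(H)$ in this order and maintaining, after $i$ placements, for each not-yet-placed $v_k$ a \emph{candidate pool} of polynomial size -- the vertices of $G$ whose adjacency pattern to $x_1,\dots,x_i$ matches that of $v_k$ to $v_1,\dots,v_i$. To place $x_{i+1}$ one chooses it from its own pool so that for every remaining $v_k$ the part of the pool of $v_k$ on the side of ``adjacent/non-adjacent to $x_{i+1}$'' demanded by $H$ stays large. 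This fails only in controlled ways: if a pool, or a sub-selection of it forced by the constraints, is internally sparse, one reads off an independent set of size $n^{\epsilon_h}$; if the bipartite graph between two pools is \emph{almost complete} -- the failure mode when a non-edge of $H$ has to be realised -- then a random-subset argument (the one-sided incarnation of dependent random choice, or K\H{o}v\'ari--S\'os--Tur\'an) extracts a polynomial-size $K_{m,m}$ subgraph, precisely because in an almost-complete bipartite graph a small subset of one side has a common neighbourhood filling a constant fraction of the other. A further situation -- the bipartite graph between two pools being so sparse that two large sub-pools have no edge between them, blocking an edge of $H$ -- is handled not by a fresh recursion but by continuing the embedding inside those two sub-pools, which are themselves candidate pools of only slightly reduced size; this keeps the recursion depth $O(h)$. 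The exponent $\epsilon_h$ is defined recursively, small enough (roughly exponentially small in $h$) that the polynomial losses over the at most $h$ rounds and at most $h$ pools still leave a positive exponent.

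I expect the crux to be exactly this selection/extension step, and inside it the \emph{intermediate density} regime: a pool, or a bipartite graph between two pools, that is neither sparse enough to yield a large independent set nor complete enough for the codegree argument to fire. Controlling this regime is where $H$-freeness is genuinely used -- informally, a configuration that remained intermediate through enough rounds would already exhibit an induced copy of a large part of $H$ -- and it is responsible both for the dependence of $\epsilon$ on $H$ and for the final bound being superpolynomial in $|V(H)|$ while polynomial in $n$. The surrounding bookkeeping -- the nested family of pools indexed by adjacency pattern, and the need for one choice of $x_{i+1}$ to be good for all of them simultaneously -- is routine once the dichotomy within a single step is established, but it is where the factors of $h$ in the exponent are spent.
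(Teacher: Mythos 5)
First, a point of order: the paper does not prove this statement at all --- it is imported verbatim as a black box from Fox and Sudakov~\cite{DBLP:journals/combinatorica/FoxS09} (and used later through Corollary~\ref{cor:FoxS}), so there is no in-paper proof to compare yours against; your attempt has to stand or fall on its own.

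As it stands it does not: it is a plan with the decisive step missing, and you say so yourself (``I expect the crux to be exactly this selection/extension step, and inside it the intermediate density regime''). That regime is not a corner case --- it is essentially the whole theorem. To read an independent set of size $n^{\epsilon}$ off a ``sparse'' pool of size $N$ you need its density to be polynomially small (at most $N^{-c}$), and to extract a biclique with parts of size $n^{\epsilon}$ from an ``almost complete'' bipartite pair you need density at least $1-N^{-c}$; everything between $N^{-c}$ and $1-N^{-c}$ is your intermediate regime, and a single bipartite pair of pools of constant, pseudorandom-looking density can persist through all $h$ rounds without ever handing you either outcome or an induced piece of $H$ for free. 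The sentence ``a configuration that remained intermediate through enough rounds would already exhibit an induced copy of a large part of $H$'' is precisely the assertion that needs a proof, and nothing in the proposal supplies it. Relatedly, your opening computation dismisses dependent random choice by applying it with $r=m$ to manufacture the codegree witness directly; that is the wrong use of the tool. In Fox and Sudakov's argument it is applied with a \emph{constant} exponent $r$ (tied to $|V(H)|$) inside a pool of polynomial size and non-negligible density, producing a polynomially large subset every $r$ of whose vertices have polynomially many common neighbours; that subset is what lets the embedding of the remaining vertices of $H$ proceed, or else yields the biclique or the independent set --- in other words, it is exactly the device that tames the intermediate regime you left open. So the skeleton (induction on $|V(H)|$, candidate pools indexed by adjacency patterns, sparse/dense dichotomies) is in the right spirit, but without a quantitative lemma covering intermediate densities the argument does not close, and the surrounding bookkeeping cannot substitute for it.
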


\paragraph{Connection 3: similarities of $P_t$-free and $C_{\geq t}$-free graphs and string graphs.}
The last connection we would like to mention is an interesting similarity between the classes of $P_t$-free and $C_{\geq t}$-free graphs and the class of \emph{string graphs}~\cite{DBLP:journals/jct/Kratochvil91}.
A graph $G$ is a string graph if it admits a representation that maps each vertex of $G$ to a continuous curve in the plane, and two vertices are adjacent if and only if their corresponding curves intersect. String graphs encapsulate many natural classes of geometric intersection graphs and are widely studied both from the combinatorial and the algorithmic point of view.
String graphs are $H$-subdivision-free whenever $H$ is an $({\geq 1})$-subdivision of a non-planar graph, i.e., it is a graph obtained from a non-planar graph by subdividing each edge at least once.

Even though string graphs and $C_{\geq t}$-free graphs are incomparable with respect to inclusion, it turns out that, somewhat surprisingly, similar algorithmic techniques work in both settings.
It is known that every $C_{\geq t}$-free graph with maximum degree $\Delta$ admits a balanced separator of size $\Oh(\Delta)$, see Bacs\'{o} \emph{et al.}~\cite{BacsoLMPTL19}  and Chudnovsky \emph{et al.}~\cite{ChudnovskyPPT20-1,ChudnovskyPPT20}.
On the other hand, every string graph with $m$ edges has a balanced separator of size $\Oh(\sqrt{m})$, see Matou\v{s}ek~\cite{DBLP:journals/cpc/Matousek14} and Lee~\cite{DBLP:conf/innovations/Lee17}.
Qualitatively, both these results say that a graph either has a vertex of large degree, or has a small balanced separator.
This property turned out to be very useful in the design of algorithms,
as for many natural problems, including \textsc{Max Independent Set}, \textsc{Max Induced Matching}, or \textsc{3-Coloring}, each of the two possible outcomes
allows us to compute the solution efficiently. Such a win-win approach leads to a subexponential running time for the considered problems~\cite{NovotnaOPRLW19,DBLP:journals/jcss/OkrasaR20}.

However, in string graphs a much stronger statement is true: if a string graph does not admit a small balanced separator, then not only it has a vertex of large degree, but it even contains a large biclique. This follows from a combination of the above-mentioned theorem of Lee~\cite{DBLP:conf/innovations/Lee17} and the following structural result.

\begin{theorem}[Fox, Pach~\cite{DBLP:journals/cpc/FoxP14}; Lee~\cite{DBLP:conf/innovations/Lee17}]\label{thm:FoxPach}
Every string graph that does not contain $K_{\ell,\ell}$ as a subgraph has degeneracy $\Oh(\ell \log \ell)$.
\end{theorem}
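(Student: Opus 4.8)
The plan is to reduce the statement to a bound on the \emph{number of edges}, and then to prove that bound by a recursive decomposition built from Lee's separator theorem, controlling the resulting pieces with the K\H{o}v\'ari--S\'os--Tur\'an theorem. For the reduction, note that both the class of string graphs and the class of graphs with no $K_{\ell,\ell}$ subgraph are closed under taking induced subgraphs; hence it suffices to prove that every $n$-vertex string graph with no $K_{\ell,\ell}$ subgraph has at most $c\,\ell\log\ell\cdot n$ edges, for some absolute constant $c$. Applying this to an arbitrary induced subgraph $G'$ of $G$ shows that $G'$ has average degree, and therefore minimum degree, at most $2c\,\ell\log\ell$; since this holds for every induced subgraph, $G$ has degeneracy $\Oh(\ell\log\ell)$. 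From now on I aim at the edge bound, writing $\alpha:=c\,\ell\log\ell$.

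The essential external ingredient is Lee's theorem~\cite{DBLP:conf/innovations/Lee17}: every string graph $H$ with $m$ edges has a set $S\subseteq V(H)$ with $|S|=\Oh(\sqrt m)$ such that (after a degree-weighted application of the theorem) every component of $H-S$ has at most $\tfrac23 m$ edges. It is exactly this $\Oh(\sqrt m)$ bound --- improving the earlier $\Oh(\sqrt{m}\log m)$ separator of Fox--Pach~\cite{DBLP:journals/cpc/FoxP14} --- that is responsible for the clean $\Oh(\ell\log\ell)$ in the conclusion. I would combine it with the standard K\H{o}v\'ari--S\'os--Tur\'an estimates: an $r$-vertex graph with no $K_{\ell,\ell}$ has $\Oh(r^{2-1/\ell}+\ell r)$ edges, and a bipartite graph with sides of sizes $a\le b$ and no $K_{\ell,\ell}$ has $\Oh\big(a\,b^{1-1/\ell}+\ell b\big)$ edges; in particular a $K_{\ell,\ell}$-free graph on $\Oh(\ell\log\ell)$ vertices has $\Oh(\ell\log\ell)$ edges per vertex.

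The argument itself runs by induction on the number of edges. A vertex of degree at most $\alpha$ can be deleted and absorbed by the inductive hypothesis, so we may assume minimum degree larger than $\alpha$; then the graph is dense, $m\ge\tfrac12\alpha n$, hence $n=\Oh(m/\alpha)$. Applying Lee's separator $S$ and recursing on the components $C_1,\dots,C_k$ of $G-S$ (each a $K_{\ell,\ell}$-free string graph with strictly fewer edges) gives, via the inductive hypothesis $e(G[C_i])\le\alpha|C_i|$, the inequality $m\le\alpha\,(n-|S|)+e^\ast$, where $e^\ast$ is the number of edges of $G$ incident to $S$. To close the induction one needs $e^\ast\le\alpha|S|$ (up to the flexibility hidden in the constant $c$), which one attempts to extract from K\H{o}v\'ari--S\'os--Tur\'an applied to $G[S]$ and to the bipartite graph between $S$ and $V(G)\setminus S$, using $|S|=\Oh(\sqrt m)$ together with $n=\Oh(m/\alpha)$.

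The step I expect to be the genuine obstacle is precisely this control of the edges destroyed by the separators, added over \emph{all} levels of the recursion. The crude bound on the edges incident to a separator of a piece $P$ has the shape $\Oh\big(|S_P|\,|V(P)|^{1-1/\ell}+\ell\,|V(P)|\big)$, and summing the $\ell\,|V(P)|$ terms over the decomposition tree naively loses a factor equal to its depth, which is $\Theta(\log n)$ --- far too large against the target $\Oh(\ell\log\ell\cdot n)$. What is needed instead is a careful amortization: one charges the destroyed edges against the density of the pieces, exploiting (i) that the low-degree peeling forces every separator to be taken inside a \emph{dense} piece, so that $|S_P|=\Oh(\sqrt{e(P)})$ is tiny next to $|V(P)|$; (ii) that in dense pieces the edge count drops by a constant factor at each split, so the logarithmic factor that ultimately appears is $\log\ell$ (the number of halvings needed to reach the density threshold $\alpha$) rather than $\log n$; and (iii) the exact K\H{o}v\'ari--S\'os--Tur\'an trade-off between $|S_P|\,|V(P)|^{1-1/\ell}$ and the density of $P$. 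This bookkeeping --- that the total number of destroyed edges stays $\Oh(\ell\log\ell\cdot n)$ --- is where the difficulty of the theorem is concentrated; once it is done, adding the K\H{o}v\'ari--S\'os--Tur\'an bound on the small remaining pieces yields the edge bound, and hence the $\Oh(\ell\log\ell)$ degeneracy. This is in essence the argument of Fox and Pach, with Lee's sharpened separator providing the final numerical bound.
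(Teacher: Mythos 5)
First, a point of context: the paper does not prove this statement at all --- Theorem~\ref{thm:FoxPach} is imported as a black box from Fox--Pach~\cite{DBLP:journals/cpc/FoxP14} and Lee~\cite{DBLP:conf/innovations/Lee17}, so there is no internal proof to measure your argument against; what you have written has to stand on its own as a reconstruction of the external result.

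As such, it does not yet stand. Your reduction from degeneracy to the edge bound is fine, and the two ingredients you name (Lee's $\Oh(\sqrt m)$ edge separator and K\H{o}v\'ari--S\'os--Tur\'an) are indeed the right ones, but the proof stops exactly where the theorem's content begins. The inductive scheme as you set it up does not close: with $|S|=\Oh(\sqrt m)$ and $m$ of order $\alpha n$, the K\H{o}v\'ari--S\'os--Tur\'an bound on the edges between $S$ and the rest has the shape $\Oh\bigl(|S|\,n^{1-1/\ell}+\ell n\bigr)$, which is nowhere near the needed $\alpha|S|$; already the additive $\ell n$ term, accumulated over the $\Theta(\log n)$ levels of a naive recursion, overshoots the target $\Oh(\ell\log\ell\cdot n)$, as you yourself observe. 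You then describe, in the conditional mood, the amortization that would fix this --- charging separator-incident edges to the density of the piece, arguing that separators are only ever taken inside pieces of density above $\alpha$ so that $|S_P|$ is small relative to $|V(P)|$, and arguing that the relevant depth is $\Oh(\log\ell)$ rather than $\Oh(\log n)$ --- but none of these claims is actually established. In particular, the assertion that the recursion only needs $\Oh(\log\ell)$ ``halvings'' before the density threshold is reached is precisely the step that produces the $\log\ell$ factor in the theorem, and it is asserted, not proved; no invariant is maintained through the recursion that would let one verify it, and no final summation is carried out. So what you have is an accurate map of where the difficulty lies, together with the correct external tools, but the quantitative heart of the argument --- the bookkeeping showing that the total number of edges destroyed by all separators is $\Oh(\ell\log\ell\cdot n)$ --- is missing, and without it the claimed bound is not derived.
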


This property of string graphs was also used algorithmically~\cite{NovotnaOPRLW19,DBLP:journals/jcss/OkrasaR20}.
Our Theorem~\ref{thm:main} is the analogue of Theorem~\ref{thm:FoxPach} in $C_{\geq t}$-free graphs.

\paragraph{Organization of the paper.}
In Section~\ref{sec:p5free}, we prove Theorem~\ref{thm:S'_d}.
Then, in Section~\ref{sec:tools}, we present some auxiliary results and tools.
We present the proof of our main result, i.e., Theorem~\ref{thm:main}, in Section~\ref{sec:main}. Theorem~\ref{thm:td} is proved in Section~\ref{sec:td}.

\section{Proof of Theorem~\ref{thm:S'_d}}\label{sec:p5free}
For $k\in \N$, we let $[k]\coloneqq\{1,\ldots,k\}$.
In this section we prove Theorem~\ref{thm:S'_d}.

\subdividedstar*

Before we proceed to the proof, we show three auxiliary results that will be used later.

\begin{lemma}\label{lem:non-neighbors}
If $G$ is a graph that does not contain $K_{\ell,\ell}$ as a subgraph and $U$ is a set of vertices in $G$ of size at least $\ell \cdot p$, then every vertex in $V(G)\setminus U$, except at most $\ell-1$, has at least $p$ non-neighbors in $U$.
\end{lemma}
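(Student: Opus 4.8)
The plan is to argue by contradiction via a direct double-counting argument; no clever tools are needed here. Suppose the conclusion fails, so there are at least $\ell$ vertices $v_1,\dots,v_\ell\in V(G)\setminus U$ each of which has at most $p-1$ non-neighbors in $U$. Equivalently, each $v_i$ is adjacent to all but at most $p-1$ vertices of $U$.

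Next I would bound the number of vertices of $U$ that are ``lost'' to at least one $v_i$. Each $v_i$ contributes at most $p-1$ non-neighbors inside $U$, so the union of these non-neighbor sets has size at most $\ell\cdot(p-1)$. Since $\size{U}\geq \ell p > \ell(p-1)$, there remain at least $\ell p-\ell(p-1)=\ell$ vertices of $U$ that are adjacent to \emph{every} one of $v_1,\dots,v_\ell$. Fix any $\ell$ of them and call this set $W\subseteq U$.

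Finally, observe that $W$ and $\{v_1,\dots,v_\ell\}$ are disjoint — crucially because each $v_i$ lies outside $U$ while $W\subseteq U$ — and every vertex of $W$ is adjacent to every $v_i$. Hence $W\cup\{v_1,\dots,v_\ell\}$ induces (in fact contains as a subgraph) a copy of $K_{\ell,\ell}$ in $G$, contradicting the assumption that $G$ is $K_{\ell,\ell}$-subgraph-free. The only point requiring the slightest care is the disjointness of the two colour classes of the biclique, which is immediate from $v_i\in V(G)\setminus U$; everything else is a one-line pigeonhole count, so I do not anticipate any real obstacle.
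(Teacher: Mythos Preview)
Your proof is correct and follows essentially the same argument as the paper: assume $\ell$ vertices outside $U$ each have at most $p-1$ non-neighbors in $U$, use a union bound to find at least $\ell$ common neighbors in $U$, and conclude with a $K_{\ell,\ell}$ subgraph. The only difference is that you spell out the disjointness of the two sides of the biclique explicitly, which the paper leaves implicit.
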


\begin{proof}
For contradiction, suppose there are $\ell$ vertices $v_1,\ldots,v_\ell\in V(G)\setminus U$ each with at most $p-1$ non-neighbors in $U$.
Then the number of common neighbors of $v_1,\ldots,v_\ell$ in $U$ is at least $\ell  p-\ell(p-1)=\ell$, contradicting the assumption that $G$ does not contain $K_{\ell,\ell}$ as a subgraph.
\end{proof}

Repeated application of Lemma~\ref{lem:non-neighbors} yields the following.

\begin{lemma}\label{lem:non-neighbors2}
Let $G$ be a graph that does not contain $K_{\ell,\ell}$ as a subgraph.
Let $U_1,\ldots,U_r$ be pairwise disjoint sets of vertices, each of size at least $\ell \cdot p$.
If $|V(G)\setminus \bigcup_{i=1}^r U_i| > r \cdot (\ell-1)$, then there is a vertex in $V(G)\setminus \bigcup_{i=1}^r U_i$ with at least $p$ non-neighbors in each of the sets $U_1,\ldots,U_r$.
\end{lemma}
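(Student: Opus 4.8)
The plan is to deduce Lemma~\ref{lem:non-neighbors2} by applying Lemma~\ref{lem:non-neighbors} once to each of the sets $U_1,\dots,U_r$ and keeping track of the total number of "bad" vertices discarded. Write $W \coloneqq V(G)\setminus \bigcup_{i=1}^r U_i$; by assumption $\size{W} > r\cdot(\ell-1)$. For each $i\in[r]$, call a vertex of $V(G)\setminus U_i$ \emph{$i$-bad} if it has fewer than $p$ non-neighbors in $U_i$, and let $B_i$ denote the set of $i$-bad vertices. Since $\size{U_i}\geq \ell\cdot p$, Lemma~\ref{lem:non-neighbors} applied with $U=U_i$ gives $\size{B_i}\leq \ell-1$.

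Now consider the set $W\setminus\bigcup_{i=1}^r B_i$. Here it is worth noting the one point that needs a moment's care: the bound $\size{B_i}\leq\ell-1$ counts \emph{all} $i$-bad vertices of $V(G)\setminus U_i$, so in particular $\size{B_i\cap W}\leq\ell-1$, and the sets $B_i$ need not be disjoint — we simply use the union bound. Thus
$\size{W\setminus\bigcup_{i=1}^r B_i}\geq \size{W}-\sum_{i=1}^r\size{B_i}\geq \size{W}-r\cdot(\ell-1)>0$,
so we may pick some vertex $v\in W\setminus\bigcup_{i=1}^r B_i$.

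Finally, verify that $v$ works. Since $v\in W$, we have $v\notin U_i$ for every $i$, hence $v\in V(G)\setminus U_i$ and the notion of being $i$-bad applies to $v$; since $v\notin B_i$, the vertex $v$ is not $i$-bad, i.e.\ it has at least $p$ non-neighbors in $U_i$. As this holds for every $i\in[r]$, the vertex $v\in V(G)\setminus\bigcup_{i=1}^r U_i$ has at least $p$ non-neighbors in each of $U_1,\dots,U_r$, as required. I do not expect any real obstacle here; the only thing to be careful about is that the per-set discard bound $\ell-1$ is a global count over $V(G)\setminus U_i$, which is exactly why the hypothesis $\size{W}>r(\ell-1)$ (rather than something depending on the $\size{U_i}$) suffices.
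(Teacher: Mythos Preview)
Your proof is correct and essentially identical to the paper's: both apply Lemma~\ref{lem:non-neighbors} once per set $U_i$, discard the at most $\ell-1$ bad vertices for each $i$, and observe that since $\size{W}>r(\ell-1)$ at least one vertex of $W$ survives. The only cosmetic difference is that the paper phrases this as an iterative procedure while you phrase it as a single union bound.
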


\begin{proof}
We apply Lemma~\ref{lem:non-neighbors} iteratively for $U=U_i$, for each $i \in [r]$.
In iteration $i$, we discard at most $\ell-1$ vertices of $V(G)\setminus \bigcup_{i=1}^r U_i$ which have fewer than $p$ non-neighbors in $U_i$.
Since $|V(G)\setminus \bigcup_{i=1}^r U_i| > r \cdot (\ell-1)$, we conclude that after the last iteration we still have at least one vertex, which satisfies the conclusion of the lemma.
\end{proof}

\begin{lemma}\label{lem:independent-set}
Let $G$ be a graph that does not contain $K_{\ell,\ell}$ as a subgraph, and let $V_1,\ldots,V_d$ be pairwise disjoint sets of vertices in $G$,
each of size at least $\ell^{d-1}$. Then there exists an independent set $\{v_1,\ldots,v_d\}$, such that $v_i\in V_i$ for all $i \in [d]$.
\end{lemma}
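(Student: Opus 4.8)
The plan is to prove Lemma~\ref{lem:independent-set} by induction on $d$. For the base case $d=1$, the hypothesis only asks $|V_1|\geq \ell^0=1$, so we may take $v_1$ to be any vertex of $V_1$, and $\{v_1\}$ is trivially independent. For the inductive step, assume the statement holds for $d-1$ and let $V_1,\ldots,V_d$ be pairwise disjoint with $|V_i|\geq \ell^{d-1}$ for all $i\in[d]$.

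The key step is to locate a good vertex $v_d\in V_d$, namely one that has at least $\ell^{d-2}$ non-neighbors in each of $V_1,\ldots,V_{d-1}$. To find it, I would pass to the induced subgraph $G'\coloneqq G[V_1\cup\cdots\cup V_d]$, which still contains no $K_{\ell,\ell}$ as a subgraph. Since the sets $V_1,\ldots,V_d$ are pairwise disjoint and cover $V(G')$, we have $V(G')\setminus\bigcup_{i=1}^{d-1}V_i=V_d$, and this set has size at least $\ell^{d-1}$. Applying Lemma~\ref{lem:non-neighbors2} in $G'$ with $r=d-1$, $U_i=V_i$ for $i\in[d-1]$, and $p=\ell^{d-2}$ requires the hypothesis $|V_d|>(d-1)(\ell-1)$; this follows from $\ell^{d-1}\geq (d-1)\cdot\ell>(d-1)(\ell-1)$, where the first inequality uses $\ell^{d-2}\geq d-1$ for $\ell\geq 2$ (a routine induction on $d$). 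Lemma~\ref{lem:non-neighbors2} then yields the desired $v_d\in V_d$.

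With $v_d$ in hand, set $V_i'$ to be the set of non-neighbors of $v_d$ inside $V_i$, for $i\in[d-1]$; by the choice of $v_d$ we have $|V_i'|\geq \ell^{d-2}=\ell^{(d-1)-1}$. Applying the inductive hypothesis to the pairwise disjoint sets $V_1',\ldots,V_{d-1}'$ gives an independent set $\{v_1,\ldots,v_{d-1}\}$ with $v_i\in V_i'\subseteq V_i$. Since each $v_i$ with $i<d$ is by construction a non-neighbor of $v_d$, the set $\{v_1,\ldots,v_d\}$ is independent, completing the induction.

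The only real subtlety I anticipate is making sure the vertex delivered by Lemma~\ref{lem:non-neighbors2} actually lies in $V_d$ rather than somewhere else in $G$; restricting to $G'=G[V_1\cup\cdots\cup V_d]$ before applying the lemma handles this cleanly, and everything else is bookkeeping plus the elementary inequality $\ell^{d-2}\geq d-1$.
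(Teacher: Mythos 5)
Your proof is correct and uses essentially the same approach as the paper: both proofs repeatedly apply Lemma~\ref{lem:non-neighbors2} with parameter $p$ shrinking by a factor of $\ell$ at each step, and rely on the elementary inequality $\ell^{k}\geq k+1$ (for $\ell\geq 2$) to meet its hypothesis. The only difference is cosmetic: the paper runs a forward iteration, picking $v_1,\ldots,v_d$ in order and maintaining shrinking candidate pools $V^i_i,\ldots,V^i_d$, while you phrase it as induction on $d$ and pick $v_d$ first before recursing on the non-neighbor sets $V_1',\ldots,V_{d-1}'$; unrolling your recursion gives the paper's iteration read backwards. Your remark about restricting to $G'=G[V_1\cup\cdots\cup V_d]$ so that the vertex delivered by Lemma~\ref{lem:non-neighbors2} lands in $V_d$ is the right precaution and matches the paper's choice to apply the lemma inside the induced subgraph on the remaining pools.
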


\begin{proof}
For each $i \in [d]$, we construct an $(i-1)$-tuple of pairwise non-adjacent vertices $v_1\in V_1,\:\ldots,\:v_{i-1}\in V_{i-1}$ and sets $V^i_i\subseteq V_i,\:\ldots,\:V^i_d\subseteq V_d$, each of size at least $\ell^{d-i}$,
such that every vertex in $V^i_i\cup\cdots\cup V^i_d$ is non-adjacent to $v_1,\ldots,v_{i-1}$.
We start with $V^1_1=V_1,\:\ldots,\:V^1_d=V_d$, which satisfy the conditions for $i=1$.
Then, for $i \in [d-1]$, we get from $i$ to $i+1$ as follows.
We apply Lemma~\ref{lem:non-neighbors2} for $G[\bigcup_{j=i}^r V^i_j]$, $r=d-i$, $p=\ell^{d-i-1}$, and sets $V^i_{i+1},V^i_{i+2},\ldots,V^i_d$.
Since $\size{V^i_i}\geq \ell^{d-i} > (d-i)(\ell-1)$,
Lemma~\ref{lem:non-neighbors2} yields a vertex $v_i\in V^i_i$ with at least $p=\ell^{d-i-1}$ non-neighbors in each of $V^i_{i+1},\ldots,V^i_d$.
The respective sets $V^{i+1}_{i+1},\ldots,V^{i+1}_d$ of non-neighbors of $v_i$ in $V^i_{i+1},\ldots,V^i_d$ are as required for the next value of $i$.
Since $\size{V^d_d}\geq 1$, we can choose a vertex $v_d\in V^d_d$ to complete the construction.
\end{proof}

Now we are ready to prove Theorem~\ref{thm:S'_d}.

\begin{proof}[Proof of Theorem~\ref{thm:S'_d}]
We show, for each $k \in [\ell]$, that every $S'_d$-free graph not containing $K_{k,\ell}$ as a subgraph has a vertex of degree at most $(k-1)\left (\ell^{d-1}+(d-1)\ell^d+\ell^{2d-2} \right)+\ell-k$, which is $\Oh(\ell^{2d-1})$ when $k=\ell$.
The statement clearly holds for $k=1$.
For the induction step, assume it holds for $k-1$, where $2\leq k\leq\ell$, and let $G$ be an $S'_d$-free graph not containing $K_{k,\ell}$ as a subgraph.
Fix a vertex $r$ of $G$.
Let $A$ be the set of neighbors of $r$ in $G$, and let $B=V(G)\setminus(A\cup\{r\})$.
For a vertex $u\in A$, let $B(u)$ be the set of neighbors of $u$ in $B$, and for a tuple $u_1,\ldots,u_m\in A$, let $B(u_1,\ldots,u_m)=\bigcup_{i=1}^mB(u_i)$.
Let
\begin{equation}\label{eq:U-def}
U=\{u\in A\colon\size{B(u)}\geq\ell^{2d-2}\}\text{.}
\end{equation}

We aim to show that $\size{U}< \ell^{d-1}+(d-1)\ell^d.$ For the sake of contradiction, suppose this is not the case, i.e., 
\begin{equation}\label{eq:U-size}
\size{U}\geq\ell^{d-1}+(d-1)\ell^d\text{.}
\end{equation}
We claim that there is an independent set $\{u_1,\ldots,u_d\} \subseteq U$ of size $d$, such that the following holds for each $i \in [d]$:
\begin{equation}\label{eq:neighbors}
\size{B(u_i)\setminus B(u_1,\ldots,u_{i-1},u_{i+1},\ldots,u_d)}\geq\ell^{d-1}\text{.}
\end{equation}
To this end, for each  $j \in [d]$, we construct a $(j-1)$-element independent set $\{u_1,\ldots,u_{j-1}\} \subseteq  U$ and a set $U_j\subseteq U\setminus\{u_1,\ldots,u_{j-1}\}$ of vertices non-adjacent to any of $u_1,\ldots,u_{j-1}$, such that
\begin{equation}\label{eq:step-size}
\size{U_j}\geq\ell^{d-j}+(d-1)\ell^{d-j+1}-(j-1)\ell
\end{equation}
and the following holds for each $u_j\in U_j$ and each $i \in [j]$:
\begin{equation}\label{eq:step-neighbors}
\size{B(u_i)\setminus B(u_1,\ldots,u_{i-1},u_{i+1},\ldots,u_j)}\geq\ell^{2d-j-1}\text{.}
\end{equation}
We start with $U_1=U$, which satisfies the conditions for $j=1$ by \eqref{eq:U-def} and \eqref{eq:U-size}.
Then, for $j \in [d-1]$, we get from $j$ to $j+1$ as follows.
Let $u_j^1,\ldots,u_j^\ell$ be $\ell$ vertices with least values of $\size{B(u)\setminus B(u_1,\ldots,u_{j-1})}$ among all $u\in U_j$.
Since it follows from \eqref{eq:step-size} that
\[\size{U_j\setminus\{u_j^1,\ldots,u_j^\ell\}}\geq \left( \ell^{d-j}+(d-1)\ell^{d-j+1}-(j-1)\ell \right) - \ell=\ell^{d-j}+(d-1)\ell^{d-j+1}-j\ell\text{,}\]
Lemma~\ref{lem:non-neighbors} implies that at least one of the vertices $u_j^1,\ldots,u_j^\ell$ has at least $\ell^{d-j-1}+(d-1)\ell^{d-j}-j$ non-neighbors in $U_j\setminus\{u_j^1,\ldots,u_j^\ell\}$.
We choose that vertex to be $u_j$, and we let $U'_j$ be the set of non-neighbors of $u_j$ in $U_j\setminus\{u_j^1,\ldots,u_j^\ell\}$.
Thus $\size{U'_j}\geq\ell^{d-j-1}+(d-1)\ell^{d-j}-j$.
For each $i \in [j]$, by \eqref{eq:step-neighbors} and Lemma~\ref{lem:non-neighbors}, at most $\ell-1$ vertices in $U'_j$ have fewer than $\ell^{2d-j-2}$ non-neighbors in $B(u_i)-B(u_1,\ldots,u_{i-1},u_{i+1},\ldots,u_j)$.
This gives at most $j(\ell-1)$ vertices in total (over all $i \in [j]$), which we remove from $U'_j$ to obtain $U_{j+1}$.
Thus
\begin{equation}\label{eq:step-size'}
\size{U_{j+1}}\geq\size{U'_j}-j(\ell-1)\geq \ell^{d-j-1}+(d-1)\ell^{d-j}-j\ell\text{,}
\end{equation}
and for each $u_{j+1}\in U_{j+1}$ and each $i \in [j]$, we have
\begin{equation}\label{eq:step-neighbors'}
\size{B(u_i)\setminus B(u_1,\ldots,u_{i-1},u_{i+1},\ldots,u_{j+1})}\geq\ell^{2d-j-2}\text{.}
\end{equation}
Furthermore, the inequality \eqref{eq:step-neighbors'} holds also for each $u_{j+1}\in U_{j+1}$ and $i=j+1$, because
\[\begin{split}
\size{B(u_{j+1})\setminus B(u_1,\ldots,u_j)}&=\size{B(u_{j+1})\setminus B(u_1,\ldots,u_{j-1})}-\size{B(u_j,u_{j+1})\setminus B(u_1,\ldots,u_{j-1})}\\
&\geq\size{B(u_j)\setminus B(u_1,\ldots,u_{j-1})}-\size{B(u_j,u_{j+1})\setminus B(u_1,\ldots,u_{j-1})}\\
&=\size{B(u_j)\setminus B(u_1,\ldots,u_{j-1},u_{j+1})}\\
&\geq\ell^{2d-j-2}\text{,}
\end{split}\]
where the first inequality follows from the choice of $u_j^1,\ldots,u_j^\ell$ and the second inequality is \eqref{eq:step-neighbors'} for $i=j$.
Inequalities \eqref{eq:step-size'} and \eqref{eq:step-neighbors'} show that \eqref{eq:step-size} and \eqref{eq:step-neighbors} hold for the next value of $j$.
For $j=d$, \eqref{eq:step-size} yields $\size{U_d}\geq 1$, so we can choose a vertex $u_d\in U_d$, and then \eqref{eq:step-neighbors} shows that $u_1,\ldots,u_d$ satisfy \eqref{eq:neighbors}, as claimed.

By \eqref{eq:neighbors}, we can apply Lemma~\ref{lem:independent-set} to the sets $V_i=B(u_i)\setminus B(u_1,\ldots,u_{i-1},u_{i+1},\ldots,u_d)$ for $i \in [d]$ to obtain an independent set $\{v_1,\ldots,v_d\}$ such that $v_i\in V_i$ for every $i \in [d]$.
It follows that the subgraph of $G$ induced on $r,u_1,v_1,\ldots,u_d,v_d$ is a copy of $S'_d$.
This contradiction shows that \eqref{eq:U-size} cannot hold, so $\size{U}\leq\ell^{d-1}+(d-1)\ell^d-1$.

Note that the graph $G[A\setminus U]$ does not contain $K_{k-1,\ell}$ as a subgraph, as otherwise the graph $G[A \setminus U \cup \{r\}]$, and thus $G$, contains $K_{k,\ell}$ as a subgraph.
The induction hypothesis applied to $G[A\setminus U]$ yields a vertex $v\in A\setminus U$ with degree at most $(k-2)(\ell^{d-1}+(d-1)\ell^d+\ell^{2d-2})+\ell-(k-1)$ in $G[A\setminus U]$.
The remaining neighbors of $v$ in $G$ belong to $\{r\}\cup U\cup B(v)$.
By \eqref{eq:U-def}, we have $\size{B(v)}\leq\ell^{2d-2}-1$ and thus $\size{\{r\}\cup U\cup B(v)} \leq \ell^{d-1}+(d-1)\ell^d+\ell^{2d-2}-1$.
Hence, the total degree of $v$ in $G$ is at most $(k-1) \left (\ell^{d-1}+(d-1)\ell^d+\ell^{2d-2} \right)+\ell-k$, which completes the induction step.
\end{proof}

\section{Auxiliary tools needed in the proof of Theorem~\ref{thm:main}}\label{sec:tools}

\subsection{Basic notions}

An \emph{oriented path} in an undirected graph is a path with one of its endpoints designated as the \emph{first vertex} and the other endpoint designated as the \emph{last vertex}. Thus, for an oriented path $P$ and an integer $i\in [|V(P)|]$, we can speak about the \emph{$i$th} vertex $P[i]$ on $P$, where $P[1]$ is the first vertex.

Two oriented paths $P_1$ and $P_2$ are \emph{anticomplete} if they are vertex-disjoint and no vertex on $P_1$ has any neighbor on $P_2$.
Two oriented paths $P_1$ and $P_2$ are \emph{partially anticomplete} if $P_1$ and $P_2$ are vertex-disjoint, have the same length, and the vertices $P_1[i]$ and $P_2[i]$ are non-adjacent for every $i\in [k]$, where $k=|V(P_1)|=|V(P_2)|$. A family of oriented paths is \emph{partially anticomplete} if all paths in this family are pairwise partially anticomplete. In particular, all paths in a partially anticomplete family have the same length.

For the remainder of the paper, we fix the integers $t$ and $\ell$ considered in the statement of Theorem~\ref{thm:main}.
We assume that $t$ is even, $t\geq 10$, and $\ell\geq 2$.
We also fix the constant $\epsilon\coloneqq\epsilon_{C_t}$ provided by Theorem~\ref{thm:FoxS} for the graph $C_t$.
As $C_{\geq t}$-free graphs are in particular $C_t$-free, Theorem~\ref{thm:FoxS} immediately gives the following.

\begin{corollary}\label{cor:FoxS}
Let $G$ be a $C_{\geq t}$-free graph that does not contain $K_{\ell,\ell}$ as a subgraph, and let $Y$ be a set of at least $\ell^{1/\epsilon}$ vertices of $G$.
Then there exists a subset $Y' \subseteq Y$ of size at least $|Y|^\epsilon$ that is independent in $G$.
\end{corollary}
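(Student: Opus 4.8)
The plan is to apply Theorem~\ref{thm:FoxS} directly to the induced subgraph $G[Y]$, using the assumption that $G$ contains no $K_{\ell,\ell}$ as a subgraph to discard one of the two possible outcomes of that theorem.

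First I would note that $G[Y]$ is $C_t$-free: since $G$ is $C_{\geq t}$-free it is in particular $C_t$-free, and this property is inherited by induced subgraphs, so $G[Y]$ is $C_t$-free as well. Applying Theorem~\ref{thm:FoxS} with $H=C_t$ — and with the constant $\epsilon=\epsilon_{C_t}$ that we have already fixed — to the graph $G[Y]$ on $n\coloneqq|Y|$ vertices, we obtain that $G[Y]$ either contains an independent set of size $n^{\epsilon}=|Y|^{\epsilon}$, or contains a complete bipartite subgraph in which both sides have size at least $|Y|^{\epsilon}$.

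Next I would rule out the second alternative. Since $|Y|\geq \ell^{1/\epsilon}$, we have $|Y|^{\epsilon}\geq \ell$, so a complete bipartite subgraph of $G[Y]$ with both sides of size at least $|Y|^{\epsilon}$ would contain $K_{\ell,\ell}$ as a subgraph; as $G[Y]$ is a subgraph of $G$, this would contradict the hypothesis on $G$. Hence the first alternative holds, yielding an independent set $Y'$ of $G[Y]$ with $|Y'|\geq |Y|^{\epsilon}$. Finally, $Y'\subseteq Y$ and $Y'$ is also independent in $G$, since the absence of edges among the vertices of $Y'$ is the same statement in $G$ and in $G[Y]$; this is exactly the desired conclusion.

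There is essentially no obstacle here: the statement is an immediate corollary of Theorem~\ref{thm:FoxS}. The only minor points to keep straight are that the threshold $\ell^{1/\epsilon}$ on $|Y|$ is precisely what is needed to upgrade ``both sides of size at least $|Y|^{\epsilon}$'' to ``contains $K_{\ell,\ell}$'', and that Theorem~\ref{thm:FoxS} phrases its second outcome as a complete bipartite \emph{subgraph} (not induced), which matches exactly the notion of containment excluded in the hypothesis.
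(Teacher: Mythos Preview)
Your proof is correct and matches the paper's approach exactly: the paper simply states that since $C_{\geq t}$-free graphs are in particular $C_t$-free, Theorem~\ref{thm:FoxS} immediately gives the corollary, and your write-up fills in precisely the routine details (apply the theorem to $G[Y]$, use $|Y|\geq \ell^{1/\epsilon}$ to rule out the biclique alternative).
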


\subsection{Clique minors}

For a vertex $v \in V(G)$ and set $X \subseteq V(G) \setminus\{v\}$, we say that $v$ and $X$ are \emph{adjacent} if $v$ has a neighbor in $X$.
A \emph{clique minor} in a graph $G$ is a family $\K$ consisting of pairwise disjoint subsets of vertices of $G$, called \emph{branch sets}, such that
\begin{itemize}[nosep]
    \item for each $K\in \K$, the graph $G[K]$ is connected; and
    \item for all distinct $K,K'\in \K$, there is a vertex in $K$, which is adjacent to $K'$.
\end{itemize}
The \emph{size} of a clique minor $\K$, denoted by $|\K|$, is the number of branch sets of $\K$.

We will use the following classic result, which asserts that excluding a clique minor implies a polynomial upper bound on degeneracy.

\begin{theorem}[Kostochka~\cite{Kostochka1982,Kostochka1984}, Thomason~\cite{thomason_1984}]\label{thm:Kostochka}
Every graph with no clique minor of size $p$ has degeneracy at most $\Oh(p \sqrt{\log p})$.
\end{theorem}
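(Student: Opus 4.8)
The statement is the classical theorem of Kostochka and Thomason on the minor-extremal function, phrased in terms of degeneracy; the plan is to reduce it to a \emph{minor-forcing} statement and then sketch the proof of the latter.

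\emph{Step 1: reduction to a minor-forcing statement.} A graph has degeneracy at most $d$ if and only if none of its subgraphs has minimum degree exceeding $d$, and a standard averaging argument shows that every graph of average degree at least $2\delta$ contains a subgraph of minimum degree at least $\delta$. Since the class of graphs with no $K_p$ minor is closed under taking subgraphs, it therefore suffices to prove the following: there is an absolute constant $C$ such that every graph $H$ with $\delta(H)\geq C\,p\sqrt{\log p}$ contains $K_p$ as a minor. Indeed, if $G$ had no $K_p$ minor but degeneracy larger than $Cp\sqrt{\log p}$, then some subgraph $H$ of $G$ would satisfy $\delta(H)>Cp\sqrt{\log p}$, hence would contain a $K_p$ minor, and so would $G$ --- a contradiction. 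Thus the whole content is the estimate $c(p)=\Oh(p\sqrt{\log p})$ for the least $c$ such that average degree at least $c$ forces a $K_p$ minor.

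\emph{Step 2: a vertex-minimal counterexample is dense.} Fix $d=Cp\sqrt{\log p}$ and, for contradiction, suppose some graph has minimum degree at least $d$ yet no $K_p$ minor; among all such graphs choose one, $G$, with $n:=|V(G)|$ smallest (so $n>d$). If $G$ contained an edge $uv$ whose endpoints have no common neighbour, then contracting $uv$ would yield a graph on $n-1$ vertices whose new vertex has degree $\deg_G(u)+\deg_G(v)-2\geq 2d-2\geq d$, in which every other vertex keeps its degree, and which still has no $K_p$ minor (contracting an edge cannot destroy a minor model), contradicting minimality. Hence every edge of $G$ lies in a triangle. The heart of the Kostochka--Thomason reduction is to push this kind of reasoning much further --- contracting carefully chosen connected subgraphs while bookkeeping how many vertices can drop below degree $d$ --- to conclude that the minimal counterexample is in fact \emph{dense}: there is a constant $\gamma>0$ with $\delta(G)\geq\gamma\,n$, equivalently $n=\Oh(p\sqrt{\log p})$.

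\emph{Step 3: the dense case, and the main obstacle.} It remains to produce a $K_p$ minor in a graph $G$ on $n=\Oh(p\sqrt{\log p})$ vertices with $\delta(G)=\Omega(n)$, and this is where the factor $\sqrt{\log p}$ is generated. The strategy is to build $p$ pairwise-adjacent connected branch sets, each of size $s=\Theta(n/p)=\Theta(\sqrt{\log p})$, by randomly partitioning a large vertex subset into $p$ parts of size $s$: each part induces a subgraph of a dense graph on few vertices and is connected with high probability, and one must show that no pair of parts is ``crossing-edge-free''. A crude union bound over the $\binom{p}{2}$ pairs using only $\delta(G)=\Omega(n)$ yields the weaker bound $s=\Theta(\log p)$ (essentially Mader's $\Oh(p\log p)$ theorem); obtaining the optimal $s=\Theta(\sqrt{\log p})$ requires exploiting finer structure of near-extremal configurations --- where the complement of the relevant minor behaves pseudorandomly, so that a pair of $s$-sets spans no edge with probability only $(1-\gamma)^{\Theta(s^2)}=(1-\gamma)^{\Theta(\log p)}$, which does beat $\binom{p}{2}$. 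This pseudorandomness and counting analysis, carried out by Kostochka and by Thomason, is the step I expect to be the hard part. With $C$ chosen large enough all inequalities hold, the counterexample cannot exist, and hence every $K_p$-minor-free graph has degeneracy $\Oh(p\sqrt{\log p})$; the bound is asymptotically tight, since $G(n,1/2)$ has clique-minor order only $\Theta(n/\sqrt{\log n})$.
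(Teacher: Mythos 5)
The paper does not prove this statement at all: Theorem~\ref{thm:Kostochka} is invoked as a classical black box, cited to Kostochka and Thomason, and only its conclusion (a clique minor of size $W$ in Step~1 of Section~\ref{sec:main}) is used. So the relevant question is whether your attempt actually constitutes a proof, and it does not. Your Step~1 reduction is fine and essentially free: since degeneracy is the maximum over subgraphs of the minimum degree, and minors of subgraphs are minors of $G$, it suffices to show that minimum degree $\geq Cp\sqrt{\log p}$ forces a $K_p$ minor. But from there on the two steps that carry all the content are asserted, not proved. In Step~2 the only argument you give (contracting an edge whose endpoints have no common neighbour) yields merely that every edge of a minimal counterexample lies in a triangle; the jump to ``the minimal counterexample has order $\Oh(p\sqrt{\log p})$, equivalently $\delta(G)\geq\gamma n$'' is exactly the delicate contraction-and-bookkeeping argument of Kostochka/Thomason and is not supplied (note also that ``$\delta(G)\geq\gamma n$'' and ``$n=\Oh(p\sqrt{\log p})$'' are not equivalent as stated without first bounding $\delta(G)$ in terms of $d$). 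In Step~3 you candidly say that the pseudorandomness and counting analysis ``is the step I expect to be the hard part''; but that analysis is precisely where the $\sqrt{\log p}$ is earned. What your sketch actually establishes, if one fills in only the routine parts, is the union-bound argument you yourself describe, i.e.\ Mader-type forcing at average degree $\Oh(p\log p)$, which is weaker than the stated theorem.

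So the proposal is a correct high-level roadmap of the known proof, with the right reduction and the right intuition for where the difficulty lies, but it has a genuine gap: both the densification of the minimal counterexample and the refined analysis of near-extremal configurations are left as citations-in-spirit rather than arguments. Since the paper itself treats the theorem as an external classical result, there is nothing to compare approaches against; judged as a standalone proof, yours proves only the easy part of the statement.
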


We say that a clique minor $\K$ is \emph{minimal} if all branch sets are inclusion-wise minimal with respect to $\K$ being a minor.
It is straightforward to verify that if $\K$ is minimal, then for each $K \in \K$ and each $v \in K$ at least one of the following holds:
\begin{itemize}[nosep]
    \item $v$ is a cutvertex of $G[K]$,
    \item there exists a branch set $K' \in \K \setminus \{K\}$, which is adjacent to $v$, and non-adjacent to every vertex in $K \setminus \{v\}$.
\end{itemize}
Indeed, if a vertex $v$ does not satisfy any of these conditions, then we can safely remove it from its branch set, which contradicts the minimality of $\K$.

In the next lemma we show that in $C_{\geq t}$-free graphs we can assume that each branch set has small diameter, i.e., all vertices are close to each other. Essentially the same argument was used in~\cite[Section~6, Claim~10]{deg2tw}.

\begin{lemma}\label{lem:depth}
Let $G$ be a $C_{\geq t}$-free graph and let $\K$ be a minimal clique minor in $G$ of size at least 3.
Then for each $K \in \K$, each shortest path in $G[K]$ has fewer than $t$ vertices.
\end{lemma}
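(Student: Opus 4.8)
The plan is to argue by contradiction and to turn a long shortest path inside a branch set into a long induced cycle of $G$, which is forbidden. So suppose some $K\in\K$ contains a shortest path on at least $t$ vertices, and among all shortest paths in the connected graph $G[K]$ pick one, say $P=v_1v_2\cdots v_m$, with the maximum number of vertices; then $m\geq t$, and since $P$ is a shortest path in the induced subgraph $G[K]$ it is an induced path in $G$.

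The first key step is to show that neither endpoint of $P$ is a cutvertex of $G[K]$. Indeed, if $v_1$ were a cutvertex, I would pick a neighbour $v_0$ of $v_1$ lying in a component of $G[K]\setminus\{v_1\}$ different from the one containing $v_2$; then every $v_0$–$v_i$ path in $G[K]$ (for $i\geq2$) passes through $v_1$, so $v_0v_1\cdots v_m$ is still a shortest path in $G[K]$, and it is induced, contradicting the maximality of $m$. Now I invoke the minimality of $\K$ (the property displayed just before the lemma): since $v_1$ is not a cutvertex, there is a branch set $K_1\in\K\setminus\{K\}$ adjacent to $v_1$ but non-adjacent to every vertex of $K\setminus\{v_1\}$, and symmetrically a branch set $K_m\in\K\setminus\{K\}$ adjacent to $v_m$ and non-adjacent to every vertex of $K\setminus\{v_m\}$. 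These two branch sets are distinct, because $K_1$ is adjacent to $v_1$ whereas $K_m$ is non-adjacent to everything in $K\setminus\{v_m\}$, and $v_1\in K\setminus\{v_m\}$; this is essentially the only place where the hypothesis $\size{\K}\geq 3$ is needed (and two extra branch sets already suffice).

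The second key step is to close the cycle through $K_1$ and $K_m$. The vertex set $\{v_1,v_m\}\cup K_1\cup K_m$ induces a connected subgraph of $G$: $G[K_1]$ and $G[K_m]$ are connected, they are joined by an edge since $\K$ is a clique minor, $v_1$ has a neighbour in $K_1$, and $v_m$ has a neighbour in $K_m$. Hence it contains a shortest $v_1$–$v_m$ path $R=v_1u_1\cdots u_ku_{}v_m$ with $k\geq 1$ internal vertices (here $v_1\not\sim v_m$ because $P$ is induced and $m\geq 3$), and $R$ is induced in $G$. All internal vertices of $R$ lie in $K_1\cup K_m$, so they are disjoint from $V(P)$ and, by the two non-adjacency properties above, non-adjacent to $v_2,\ldots,v_{m-1}$. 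Consequently $P$ and $R$ together form a cycle on $m+k\geq t+1$ vertices whose potential chords (edges among the $v_i$, edges among the $u_j$, or edges between an interior $v_i$ and some $u_j$) are all ruled out; this induced cycle on at least $t$ vertices contradicts the assumption that $G$ is $C_{\geq t}$-free.

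The step I expect to be the crux is the normalisation at the start: one must arrange that the endpoints of the chosen shortest path are genuine ``attachment'' vertices of other branch sets rather than cutvertices of $G[K]$, since only then can one reroute between the endpoints through vertices that remain anticomplete to the interior of the path. Once that is secured, the rest is routine bookkeeping with the two non-adjacency properties supplied by minimality.
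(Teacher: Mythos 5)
Your proof is correct and takes essentially the same approach as the paper's: normalise the shortest path so its endpoints are not cutvertices of $G[K]$, use minimality to obtain two distinct branch sets privately attached at the two endpoints, route a short induced path between the endpoints through those branch sets, and concatenate to get an induced cycle of length $\geq t$. The only cosmetic difference is that you maximise the vertex count of the shortest path while the paper maximises the distance between the endpoints, but these are the same normalisation.
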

\begin{proof}
For contradiction, assume that there are $K \in \K$ and two vertices $u,v \in \K$ such that a shortest $u$-$v$-path $P$ in $G[K]$ has at least $t$ vertices.
Without loss of generality we can assume that $u$ and $v$ are at maximum distance in $G[K]$,
which implies that none of them is a cutvertex in $G[K]$.
Thus by the minimality of $\K$, there are two distinct branch sets $K_u,K_v$ in $\K \setminus \{K\}$,
such that $K_u$ is adjacent to $u$ and to no other vertex of $K$,
and $K_v$ is adjacent to $v$ and to no other vertex of $K$.

Let $Q$ be a shortest $u$-$v$-path in $G$, whose all internal vertices are in $K_u \cup K_v$; it exists, as $G[K_u \cup K_v]$ is connected.
Note that $Q$ is induced and has at least two internal vertices. Furthermore, no internal vertex of $Q$ is adjacent to any internal vertex of $P$. Thus concatenating $P$ and $Q$ yields an induced cycle in $G$ with more than $t$ vertices, a contradiction.
\end{proof}

Fix a clique minor $\K$ in a graph $G$.
A vertex $x$ is \emph{full} (with respect to $\K$) if it is adjacent to all branch sets $K\in \K$ such that $x\notin K$.
In next two lemmas we show that in $C_{\geq t}$-free graphs, the existence of a large clique minor implies the existence of a large clique minor whose every branch set contains a full vertex.

\begin{lemma}\label{lem:big}
Let $G$ be a $C_{\geq t}$-free graph, $p \in N$, and let $\K$ be a minimal clique minor in $G$ of size at least $2t^3p^2$.
Then there are at least $p$ branch sets $K$ in $\K$ such that each of them contains a vertex adjacent to at least $p^2$ branch sets of~$\K$.
\end{lemma}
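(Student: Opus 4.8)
The plan is to argue by contradiction: suppose fewer than $p$ branch sets of $\K$ contain a vertex adjacent to at least $p^2$ branch sets. Then at least $2t^3p^2 - p$ branch sets $K$ have the property that \emph{every} vertex of $K$ is adjacent to fewer than $p^2$ branch sets; call these branch sets \emph{low}. I want to extract from the low branch sets a long induced path in $G$, contradicting $C_{\geq t}$-freeness. The natural way to do this is to build a sequence $K_1, K_2, \ldots$ of low branch sets, together with short connecting paths realizing the adjacencies guaranteed by the clique-minor definition, so that consecutive branch sets are linked but non-consecutive ones are ``far apart'' — in the sense that the chosen vertices in $K_i$ and the chosen connectors avoid being adjacent to the chosen parts of $K_j$ for $|i-j|\geq 2$. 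Lemma~\ref{lem:depth} is crucial here: since $\K$ is minimal and large, each low $K_i$ has a shortest path of fewer than $t$ vertices, so the ``footprint'' we use inside each branch set has bounded size. We then concatenate these pieces into one long induced path (or induced cycle) on more than $t$ vertices.

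Concretely, I would proceed greedily. Having chosen $K_1,\ldots,K_i$ (all low) together with bounded-size vertex sets $D_1,\ldots,D_i$ inside them (each $D_j$ a shortest path witnessing the relevant adjacencies, of size $<t$ by Lemma~\ref{lem:depth}) and connector paths between consecutive ones, I want to pick $K_{i+1}$ adjacent to $K_i$ but such that $K_{i+1}$, and a short connector from $K_i$ to $K_{i+1}$, are anticomplete to $D_1 \cup \cdots \cup D_{i-1}$ together with the earlier connectors. Each vertex in that already-built structure is a vertex of $G$ and hence is adjacent to fewer than $p^2$ branch sets (it lies in a low branch set, or on a short connector inside the union of at most two low branch sets — here again Lemma~\ref{lem:depth} bounds connector length, and low branch sets have low ``degree'' into $\K$). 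The total number of branch sets we must avoid is therefore at most (number of vertices built so far) $\times p^2$, which is $O(t\cdot i \cdot p^2)$ plus connector contributions; choosing the constant $2t^3$ generously, as long as $i < t$ we still have strictly more than $O(t \cdot i \cdot p^2) = O(t^2 p^2)$ low branch sets available, so a valid $K_{i+1}$ exists. After $t$ steps we have $t$ low branch sets glued along short induced connectors, pairwise non-adjacent except consecutively, which yields an induced path on more than $t$ vertices — contradiction.

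The main obstacle I anticipate is the bookkeeping that makes the concatenation of the per–branch-set shortest paths and the inter–branch-set connectors into a single \emph{induced} path of the right length. Two technical points need care. First, inside a branch set $K_i$ we may need to connect two prescribed entry/exit vertices (the attachment points of the incoming and outgoing connectors); we should take a shortest such path in $G[K_i]$, which has $<t$ vertices by Lemma~\ref{lem:depth}, and verify it is induced and interacts correctly with neighboring pieces. Second, when we reject a candidate branch set because it (or its connector) is adjacent to the ``old'' part, we must be sure we are only ever forbidding $O(t^2p^2)$ branch sets in total, which forces the exponent in $2t^3p^2$ — here the three powers of $t$ come, roughly, from: the length $t$ of the path we are building, the size $<t$ of each branch set's footprint, and the length $<t$ of each connector (which lives in a union of two low branch sets). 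I would also need to handle the boundary/degenerate cases (branch sets of size one, connectors of length one) and confirm that the final object has $>t$ vertices even after accounting for possible overlaps, but these are routine once the greedy step is set up cleanly.
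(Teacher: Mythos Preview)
Your overall setup matches the paper's: assume fewer than $p$ branch sets contain a high-degree vertex, restrict to the at least $t^3p^2$ ``low'' branch sets, and use Lemma~\ref{lem:depth} to bound the per-branch-set footprint. The gap is in the final step. You write that after $t$ greedy steps you obtain ``an induced path on more than $t$ vertices --- contradiction.'' But a long induced path is \emph{not} a contradiction in a $C_{\geq t}$-free graph; such graphs may contain arbitrarily long induced paths (indeed, a path itself is $C_{\geq t}$-free). You need a long induced \emph{cycle}, and your greedy construction does not produce one.

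Closing the greedy path into a cycle is exactly where the argument becomes delicate, and your proposal does not address it. The difficulty is a genuine circularity: to connect $K_t$ back to $K_1$ you must choose a connector edge and extend the internal paths inside both $K_1$ and $K_t$ to reach it, but those extensions may be adjacent to the already-fixed middle paths $P_2,\ldots,P_{t-1}$. You cannot forbid this in advance, because when you chose $K_2,\ldots,K_{t-1}$ you did not yet know which vertices of $K_1$ and $K_t$ the closing connector would use. The paper sidesteps this by abandoning the greedy choice entirely: it fixes in advance, for every ordered pair of low branch sets, a connector edge, and for every ordered triple $(K,K',K'')$ a shortest path inside $K'$ joining the two connector endpoints; it then selects the cyclic sequence $(K_0,\ldots,K_{t-1})$ uniformly at random and shows that with positive probability the resulting cycle has no chords between non-consecutive branch sets. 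The point is that the internal path in $K_i$ depends only on $K_{i-1},K_i,K_{i+1}$, hence is independent of $K_j$ for non-adjacent $j$, which makes the union bound go through --- and there is no ``first'' or ``last'' branch set, so the closing problem never arises. Your greedy idea could perhaps be repaired by building a much longer path and then finding a good chord to close it, but this is real extra work, not routine bookkeeping.
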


\begin{proof}
Let $\K' \subseteq \K$ be the subfamily of $\K$ consisting of those branch sets of $\K$ whose every vertex is adjacent to fewer than $p^2$ branch sets.
For contradiction, suppose that the statement of the lemma does not hold, i.e., $|\K'| \geq 2 t^3 p^2 -p \geq t^3 p^2$.
Recall that since $\K$ is a minimal clique minor in $G$, by Lemma~\ref{lem:depth} we have that for every $K \in \K$, every shortest path in $G[K]$ has fewer than $t$ vertices.

For each distinct $K,K' \in \K'$, we select two adjacent vertices $v_{K,K'} \in K$ and $v_{K',K} \in K'$; they exist, as $\K'$ is a clique minor.
For each triple of distinct sets $K,K',K'' \in \K'$, let $P_{K,K',K''}$ be a shortest $v_{K',K}$-$v_{K',K''}$-path fully contained in $K'$. Note that thus, $P_{K,K',K''}$ is induced and has fewer than $t$ vertices.

Let $S=(K_0,K_1,\ldots,K_{t-1})$ be a sequence of sets from $\K'$, chosen uniformly and independently at random.
Note that the probability that some set appears in $S$ more than once is at most $\binom{t}{2}/|\K'| < 0.1$.

Suppose now that the sets in $S$ are pairwise distinct.
We treat $S$ as a cyclic sequence, i.e., all arithmetic operations on indices are performed modulo $t$.
In particular, $K_{t}=K_0$.
To simplify the notation, for any distinct $a,b,c \in \{0,\ldots,t-1\}$ we write $v_{a,b}$ instead of $v_{K_a,K_b}$ and $P_{a,b,c}$ instead of $P_{K_a,K_b,K_c}$.

Define $C$ as the concatenation of the following paths in $G$:
\[
C = v_{0,t-1} - P_{t-1,0,1} - v_{0,1} - v_{1,0} - P_{0,1,2} - v_{1,2} - v_{2,1} - P_{1,2,3} - \ldots - P_{t-2,t-1,0} - v_{t-1,0}\text{.}
\]
Observe that $C$ is a cycle in $G$ that visits all sets $K \in S$ in the order prescribed by $S$.
Thus $C$ has at least $t$ vertices, but a priori it may not be induced.

Consider any $i,j \in \{0,1,\ldots,t-1\}$ that are non-consecutive modulo $t$.
Now let us estimate the probability that there is an edge in $G$ with one endpoint in $V(C) \cap K_{i}$ and the other in $V(C) \cap K_{j}$.

Observe that the choice of $P_{i-1,i,i+1}$ depends only on the choice of $K_{i-1},K_i$, and $K_{i+1}$.
In particular, it is independent of the choice of $K_j$.
As  $P_{i-1,i,i+1}$ has fewer than $t$ vertices and each of them is adjacent to fewer than $p^2$ branch sets of $\K$ (and thus also of $\K'$), we conclude that the probability that there is an edge with one endpoint in $V(P_{i-1,i,i+1})$ and second in $K_j$ is smaller than $t \cdot p^2 / |\K'|$. Summing over all choices of $i$ and $j$, we conclude that the probability that $C$ has an edge between vertices from branch sets that are non-consecutive on $S$ is less than $\binom{t}{2} \cdot t \cdot p^2/|\K'| < 0.5$.

Thus with probability at least $1-0.1-0.5 = 0.4$, the sequence $S$ contains $t$ distinct branch sets (i.e., $C$ is well defined) and the only possible chords in the cycle $C$ are between vertices of branch sets that are consecutive in $S$. So there exists at least one choice of $S$ that gives a cycle $C$ with such a property.

Now let $C'$ be a shortest cycle using only vertices of $C$ that intersects all branch sets of $S$ in the ordering given by this sequence.
Observe that since $C$ does not have chords between non-consecutive branch sets, $C'$ is induced.
Furthermore, it has at least $t$ vertices. This is a contradiction with the assumption that $G$ is $C_{\geq t}$-free.
\end{proof}

\begin{lemma}\label{lem:full}
Let $p \in \N$, $G$ be a $C_{\geq t}$-free graph, and $\K$ be a clique minor in $G$ of size at least $2 t^3p^2$.
Then there exists a clique minor $\K'$ in $G$ such that $|\K'|=p$ and every branch set of $\K'$ contains a full vertex.
Furthermore, for each $K' \in \K'$, each shortest path in $G[K']$ has fewer than $2t$ vertices.
\end{lemma}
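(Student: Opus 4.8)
The plan is to build $\K'$ by choosing $p$ branch sets of $\K$ as ``cores'' and enlarging each of them with a few other branch sets of $\K$. First I would pass to a minimal clique minor: repeatedly delete a vertex from a branch set whenever the result is still a clique minor; this does not change the number of branch sets, so we may assume $\K$ is minimal and $|\K|\ge 2t^3p^2$. Applying Lemma~\ref{lem:big} with parameter $p$ then yields branch sets $K_1,\dots,K_p\in\K$ and vertices $x_i\in K_i$ such that each $x_i$ is adjacent to at least $p^2$ branch sets of $\K$. These $x_1,\dots,x_p$ will be the promised full vertices, and $K'_i$ will consist of $K_i$ together with some branch sets taken from $\K\setminus\{K_1,\dots,K_p\}$.

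The second step is a greedy combinatorial selection. For $i\in[p]$ let $\mathcal M_i$ be the family of $W\in\K\setminus\{K_1,\dots,K_p\}$ with $x_i$ adjacent to $W$; since $x_i$ is adjacent to at least $p^2$ branch sets, at most $p-1$ of which lie among the $K_j$, we get $|\mathcal M_i|\ge p^2-p+1$. Now I would build pairwise disjoint families $\mathcal A_1,\dots,\mathcal A_p\subseteq\K\setminus\{K_1,\dots,K_p\}$ as follows: process $m=1,\dots,p$, and for each $i\in[p]\setminus\{m\}$ pick $w^m_i\in\mathcal M_i\setminus(\mathcal A_1\cup\dots\cup\mathcal A_{m-1})$, then set $\mathcal A_m=\{w^m_i : i\in[p]\setminus\{m\}\}$. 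This is always possible, because $\mathcal A_1\cup\dots\cup\mathcal A_{m-1}$ has at most $(m-1)(p-1)\le(p-1)^2<p^2-p+1\le|\mathcal M_i|$ members, so $\mathcal M_i$ is never exhausted. By construction the $\mathcal A_m$ are pairwise disjoint, each has at most $p-1$ members, and $\mathcal A_m\cap\mathcal M_i\ne\emptyset$ for every $i\ne m$. Finally set $K'_m:=K_m\cup\bigcup_{W\in\mathcal A_m}W$ and $\K':=\{K'_1,\dots,K'_p\}$.

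Then I would verify the required properties. The $K'_m$ are pairwise disjoint: the $K_m$ are distinct branch sets of $\K$, and the branch sets in $\bigcup_m\mathcal A_m$ avoid all the $K_j$ and are pairwise disjoint; in particular $x_m\in K_m\subseteq K'_m$ while $x_m\notin K'_{m'}$ for $m'\ne m$. Each $G[K'_m]$ is connected, since $G[K_m]$ is connected and every $W\in\mathcal A_m$ is a branch set of $\K$ distinct from $K_m$, hence joined to $K_m$ by an edge, with $G[W]$ connected. For $m\ne m'$, the distinct branch sets $K_m\subseteq K'_m$ and $K_{m'}\subseteq K'_{m'}$ are joined by an edge, so $\K'$ is a clique minor, clearly of size $p$. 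And $x_m$ is full: for $m'\ne m$ the family $\mathcal A_{m'}$ contains a branch set in $\mathcal M_m$, that is, a branch set containing a neighbor of $x_m$, so $x_m$ is adjacent to $K'_{m'}$.

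It remains to bound shortest paths in $G[K'_m]$, and this is the step that requires the most care. By Lemma~\ref{lem:depth} (applicable since $\K$ is minimal and $|\K|\ge 3$) every shortest path inside $G[B]$ has fewer than $t$ vertices, for each branch set $B\in\K$; here the relevant $B$ are $K_m$ and the members of $\mathcal A_m$, which are pairwise distinct branch sets of $\K$ with union $K'_m$. Given $u,v\in K'_m$ lying in blocks $B_1,B_2$, if $B_1=B_2$ they are joined inside $G[B_1]$ by a path on fewer than $t$ vertices, and if $B_1\ne B_2$, picking an edge $ab$ with $a\in B_1,b\in B_2$ and concatenating a shortest $u$-$a$ path in $G[B_1]$, the edge $ab$, and a shortest $b$-$v$ path in $G[B_2]$ yields a walk in $G[K'_m]$ on fewer than $2t$ vertices. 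The crucial point is that \emph{any} two branch sets of $\K$ are adjacent, so that moving between two blocks of $K'_m$ traverses only two branch sets of $\K$ (not three); this is exactly why the bound is $2t$ rather than $3t$, and why the extra branch sets should be glued to $K_m$ ``as a clique'' rather than routed through $K_m$. The only quantitative input needed is the inequality $|\mathcal M_i|\ge p^2-p+1>(p-1)^2$, which guarantees that the greedy selection never gets stuck.
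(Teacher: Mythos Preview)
Your proof is correct and follows essentially the same approach as the paper's: both pass to a minimal clique minor, invoke Lemma~\ref{lem:big} to get $p$ cores with high-degree vertices, and then greedily attach to each core $K_m$ one auxiliary branch set adjacent to $x_i$ for every $i\neq m$, using the count $p^2-(p-1)-p(p-1)>0$ to ensure the greedy selection never fails. The diameter argument is identical to the paper's, exploiting that any two branch sets of the original clique minor are adjacent so that each $K'_m$ is a union of pairwise adjacent branch sets of $\K$.
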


\begin{proof}
Without loss of generality we can assume that $\K$ is minimal, as otherwise we can safely remove some vertices from branch sets of $\K$.
By Lemma~\ref{lem:big}, there are $p$ distinct branch sets $K_1,\ldots,K_p\in \K$ such that for each $i\in [p]$, there is $b_i \in K_i$ which is adjacent to at least $p^2$ branch sets of $\K$.
Let $\Nn_i$ consist of all the branch sets of $\K$ to which $b_i$ is adjacent.

For each pair $(i,j)\in [p]\times [p]$ such that $i\neq j$ we select a branch set $L_{i,j}\in \Nn_j$. Observe that we can select branch sets $L_{i,j}$ so that they are pairwise different and different from $K_1,\ldots,K_p$. Indeed, we consider the relevant pairs $(i,j)$ in any order and when choosing $L_{i,j}$ from $\Nn_j$, we avoid all the branch sets $K_1,\ldots,K_p$ and all the branch sets $L_{i',j'}$ that were previously selected. Since $|\Nn_j|\geq p^2$ and the number of branch sets to be avoided is less than $p+p(p-1)=p^2$, there is always at least one valid choice for $L_{i,j}$.

For each $i\in [p]$ we define
\[K'_i\coloneqq K_i\cup \bigcup_{j\in [p]\setminus \{i\}} L_{i,j}\text{.}\]
Let $\K'\coloneqq \{K_1',\ldots,K_p'\}$. Clearly, $\K'$ is a clique minor of size $p$ and for each $i\in [p]$, the vertex $b_i$ belongs to $K_i'$ and is full in $\K'$. 

Now let us consider some $K' \in \K'$ and two vertices $u,v \in K'$.
Let $K_u,K_v \in \K$  be, respectively, the branch sets of $\K$ containing $u$ and $v$. If $K_u=K_v$, then $u$ and $v$ can be connected within $G[K_u]$ by a shortest path with fewer than $t$ vertices; this path is in particular contained in $G[K']$.
Otherwise, recall that $G[K_u\cup K_v]$ is connected, so let $u'v'$ be an arbitrary edge with one endpoint ($u'$) belonging to $K_u$ and second ($v'$) belonging to $K_v$. As shortest paths in $G[K_u]$ and $G[K_v]$ have fewer than $t$ vertices, the concatenation of a shortest $u$-to-$u'$ path in $K_u$ and a shortest $v$-to-$v'$ path in $K_v$ gives a path in $G[K']$ from $u$ to $v$ with fewer than $2t$ vertices.  As $u$ and $v$ were chosen arbitrarily in $K'$, we conclude that every shortest path in $G[K']$ has fewer than $2t$ vertices.
\end{proof}

\subsection{VC dimension and its consequences}

A \emph{set system} is a pair $\Ss=(U,\Ff)$ where $U$ is the universe and $\Ff$ is a family of subsets of $U$. A set $Z\subseteq U$ is \emph{shattered} by $\Ss$ if for every subset $Z'$ of $Z$ there exists $F\in \Ff$ such that $F \cap Z= Z'$. The \emph{VC dimension} of $\Ss$ is the maximum size of a set shattered by $\Ss$.

We will use the well-known Sauer--Shelah lemma, stated below.

\begin{theorem}[\cite{chervonenkis1971theory,sauer1972density,shelah1972combinatorial}]\label{thm:Sauer-Shelah}
Let $k\in \N$. Every set system with universe of size $n$ and VC dimension at most $k$
contains at most $\sum_{i=0}^{k} \binom{n}{i}$ distinct sets.
\end{theorem}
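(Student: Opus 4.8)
The plan is to prove the bound by induction on the size $n$ of the universe (with a secondary induction on $k$). Write $f(n,k)=\sum_{i=0}^{k}\binom{n}{i}$ for the claimed bound; the goal is $|\Ff|\le f(n,k)$ for every set system $\Ss=(U,\Ff)$ with $|U|=n$ and VC dimension at most $k$. Two base cases settle the recursion. If $n=0$, then $\Ff$ is either empty or equals $\{\emptyset\}$, so $|\Ff|\le 1=f(0,k)$. If $k=0$, then no singleton is shattered, which means that for every $x\in U$ either every member of $\Ff$ contains $x$ or none does; hence all members of $\Ff$ coincide and $|\Ff|\le 1=f(n,0)$.

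For the inductive step, fix an element $x\in U$ and split $\Ff$ into two derived families over the smaller universe $U'=U\setminus\{x\}$. Let $\Ff_1=\{F\setminus\{x\}\colon F\in\Ff\}$ be the family of restrictions, and let $\Ff_2=\{F\in\Ff\colon x\notin F \text{ and } F\cup\{x\}\in\Ff\}$ be the family of those $x$-free members whose ``twin'' $F\cup\{x\}$ also lies in $\Ff$. The key counting identity is $|\Ff|=|\Ff_1|+|\Ff_2|$: every set of $\Ff_1$ is the restriction of either one or two members of $\Ff$, and $|\Ff_2|$ counts exactly how many restrictions arise from two members.

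It then suffices to bound the VC dimensions of the two derived systems and apply the induction hypothesis. First, $(U',\Ff_1)$ has VC dimension at most $k$: if $Z\subseteq U'$ is shattered by $\Ff_1$, then since $x\notin Z$ each restriction $F\setminus\{x\}$ has the same trace on $Z$ as $F$ itself, so $Z$ is already shattered by $\Ff$ and $|Z|\le k$. Second, $(U',\Ff_2)$ has VC dimension at most $k-1$: if $Z\subseteq U'$ is shattered by $\Ff_2$, then for each $Z'\subseteq Z$ there is $F\in\Ff_2$ with $F\cap Z=Z'$, and both $F$ and $F\cup\{x\}$ lie in $\Ff$, witnessing the traces $Z'$ and $Z'\cup\{x\}$ on $Z\cup\{x\}$; hence $Z\cup\{x\}$ is shattered by $\Ff$ and $|Z|\le k-1$. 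Thus $|\Ff|\le f(n-1,k)+f(n-1,k-1)$, and Pascal's identity $\binom{n-1}{i}+\binom{n-1}{i-1}=\binom{n}{i}$ gives $f(n-1,k)+f(n-1,k-1)=f(n,k)$, completing the induction.

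There is no serious obstacle here; the only points requiring care are the bookkeeping behind $|\Ff|=|\Ff_1|+|\Ff_2|$ and checking that the VC-dimension drop for $\Ff_2$ is justified by genuinely exhibiting all $2^{|Z|+1}$ traces on $Z\cup\{x\}$. An equally short alternative is the down-shifting argument: repeatedly replace every $F\ni x$ by $F\setminus\{x\}$ whenever the latter is not yet present; this never increases $|\Ff|$, never creates a newly shattered set, and terminates in a downward-closed family, for which the bound $\sum_{i=0}^{k}\binom{n}{i}$ follows simply by counting sets of size at most $k$. I would present the inductive proof as the primary one and mention the shifting proof only in passing.
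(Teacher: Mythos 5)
Your proof is correct, and it is the standard inductive proof of the Sauer--Shelah lemma. Note, however, that the paper does not prove this statement at all: it invokes it as a known classical result, with citations to Vapnik--Chervonenkis, Sauer, and Shelah. So there is no in-paper argument to compare against; your write-up simply supplies the textbook proof that the paper takes for granted. For completeness: the split $|\Ff|=|\Ff_1|+|\Ff_2|$ is exactly right (each restriction arises from one or two members, and $\Ff_2$ counts the ``double'' ones), the VC-dimension bounds on $\Ff_1$ and $\Ff_2$ are justified correctly (in particular, you do exhibit all $2^{|Z|+1}$ traces on $Z\cup\{x\}$), and Pascal's identity closes the recursion. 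The shifting/compression argument you sketch in the final paragraph is also a valid and well-known alternative.
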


The following lemma will be used several times. Here, for a set of vertices $X$ and a vertex $u\notin X$, by the neighborhood of $u$ in $X$ we mean the set of neighbors of $u$ that are contained in $X$.

\begin{lemma}\label{lem:traces0}
Let $q\in \N$ and $G$ be a $C_{\geq t}$-free graph.
Let $X$ be a set of at least $qt/2$ vertices in $G$ such that $G[X]$ is $q$-colorable.
Let $Y$ be an independent set in $G$ that is disjoint from $X$.
Then there exists a set $Y' \subseteq Y$ such that $|Y'| \geq \frac{|Y|}{|X|^{qt/2}}$ and all the vertices in $Y'$ have the same neighborhood in $X$.
\end{lemma}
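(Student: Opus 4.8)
The plan is to bound the number of distinct \emph{traces} (neighborhoods in $X$) that vertices of the independent set $Y$ can realize, and then apply pigeonhole. Concretely, consider the set system $\Ss = (X, \Ff)$ where $\Ff = \{N(u)\cap X \colon u\in Y\}$. If I can show that the VC dimension of $\Ss$ is at most some bound $k$, then by the Sauer--Shelah lemma (Theorem~\ref{thm:Sauer-Shelah}) the number of distinct sets in $\Ff$ is at most $\sum_{i=0}^{k}\binom{|X|}{i} \le |X|^k$ (using $|X|\ge qt/2\ge k$, which holds once I pick $k = qt/2$), and hence some trace is shared by at least $|Y|/|X|^k \ge |Y|/|X|^{qt/2}$ vertices of $Y$; taking $Y'$ to be that class of vertices finishes the proof since $Y$ is independent so $Y'$ is independent.

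So the heart of the matter is: \textbf{the VC dimension of $\Ss$ is at most $qt/2$}. Here is where the hypotheses $C_{\geq t}$-free, $G[X]$ $q$-colorable, and $Y$ independent are used. Suppose for contradiction that a set $Z\subseteq X$ of size $qt/2 + 1$ is shattered by $\Ss$. Partition $Z$ into color classes $Z_1,\dots,Z_q$ of $G[X]$; by averaging some class, say $Z_1$, has size at least $|Z|/q > t/2$, so $|Z_1|\ge t/2 + 1$ (using that $t$ is even, $t/2+1$ vertices). The set $Z_1$ is independent in $G$ (it is a color class). The idea is to build a long induced cycle through $Z_1$ using witnessing vertices from $Y$: enumerate $Z_1 = \{z_1,\dots,z_m\}$ with $m\ge t/2$; since $Z$ (hence $Z_1$) is shattered, for each consecutive pair $z_i,z_{i+1}$ (cyclically) there is a vertex $y_i\in Y$ whose neighborhood in $Z$ is exactly $\{z_i,z_{i+1}\}$ — in particular $y_i$ is adjacent to $z_i$ and $z_{i+1}$ and to no other vertex of $Z_1$. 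The alternating sequence $z_1, y_1, z_2, y_2,\dots, z_m, y_m, z_1$ is then a cycle of length $2m \ge t$, and I claim it is induced: the $z_i$'s are pairwise non-adjacent (independent set), the $y_i$'s are pairwise non-adjacent ($Y$ independent), and each $y_i$ sees exactly its two designated $z$'s among $Z_1$ by the choice of trace. This induced cycle has at least $t$ vertices, contradicting $C_{\geq t}$-freeness.

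I expect the main obstacle to be getting the constants to line up exactly so that the class $Z_1$ has enough vertices to force a cycle of length \emph{at least} $t$ (not merely close to $t$), and this is exactly why the lemma takes $|X|\ge qt/2$ and the bound $|Y|^{}/|X|^{qt/2}$: one wants $|Z_1|\cdot 2 \ge t$, i.e.\ $|Z_1|\ge t/2$, which needs $|Z| > q\cdot(t/2 - 1)$, hence $|Z|\ge q(t/2-1)+1$; choosing the VC-dimension bound as $qt/2$ comfortably exceeds this, and since $t$ is even all these quantities are integers. A minor point to handle carefully is the degenerate case $m=t/2$ being odd or the cyclic indexing when $m$ is small — but with $t\ge 10$ even and $m\ge t/2\ge 5$ there is plenty of room, and the cycle $z_1 y_1 z_2\cdots z_m y_m z_1$ always has even length $2m\ge t$. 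One should also note that if $|Y| < |X|^{qt/2}$ the statement is vacuous (we may take $Y'$ to be a single vertex, or the bound $|Y'|\ge |Y|/|X|^{qt/2}$ is satisfied by any singleton), so the pigeonhole step is only meaningful in the interesting regime.
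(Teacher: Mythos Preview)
Your proposal is correct and follows essentially the same approach as the paper: bound the VC dimension of the set system of traces using the $q$-colorability of $G[X]$ to find a large independent subset of a shattered set and build a long induced cycle via shattering witnesses in $Y$, then apply Sauer--Shelah and pigeonhole. The only discrepancy is an off-by-one in your stated VC bound: you assert VC dimension $\le qt/2$ and then bound $\sum_{i=0}^{qt/2}\binom{|X|}{i}\le |X|^{qt/2}$, which is not quite true in general, whereas your own contradiction argument (as you note, it works already for $|Z|=qt/2$ since averaging gives a color class of size $\ge t/2$) actually proves VC dimension $< qt/2$, matching the paper's bound $\sum_{i=0}^{qt/2-1}\binom{|X|}{i}\le \sum_{i=0}^{qt/2-1}|X|^i\le |X|^{qt/2}$.
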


\begin{proof}
Let $\Ss=(X,\Ff)$ be the set system with universe $X$, where
\[\Ff\coloneqq \{N[u]\cap X\colon u\in Y\}\text{.}\]
We verify that the VC dimension of $\Ss$ is less than $qt/2$.
Suppose for the sake of contradiction that $X$ contains a set $Z$ of size at least $qt/2$ that is shattered by $\Ss$.
Since $G[X]$ is $q$-colorable, the set $Z$ contains a subset $I$ of size at least $\frac{|Z|}{q}\geq t/2$ that is independent in $G$.
Let $z_0,\ldots,z_{t/2-1}$ be arbitrary $t/2$ vertices of~$I$. The arithmetic operations on indices of these vertices are computed modulo $t/2$.
Since $Z$ is shattered by $\Ss$, for each $i\in \{0,1,\ldots,t/2-1\}$ there exists a vertex $y_i\in Y$ that is adjacent to $z_i$ and $z_{i+1}$ and non-adjacent to all other vertices among $\{z_0,\ldots,z_{t/2-1}\}$.
Consequently, the set $\bigcup_{i \in \{0,\ldots,t/2-1\}} \{ z_i, y_i\}$ induces a $C_{t}$ in $G$, which is a contradiction.

Since the VC dimension of $\Ss$ is less than $qt/2$,
Theorem~\ref{thm:Sauer-Shelah} implies that
\[|\Ff|\leq \sum_{i=0}^{qt/2-1} \binom{|X|}{i}\leq \sum_{i=0}^{qt/2-1}|X|^i\leq |X|^{qt/2}\text{,}\]
where the last inequality follows from the fact that $|X|\geq qt/2\geq 2$.
Therefore, there must exist a set $Y'\subseteq Y$ of size at least $\frac{|Y|}{|X|^{qt/2}}$ such that all vertices $u\in Y'$ give rise to the same set $N[u]\cap X\in \Ff$, that is, the vertices in $Y'$ have the same neighborhood in $X$.
\end{proof}

We now derive two useful corollaries by combining Lemma~\ref{lem:traces0} with the assumption that the considered graph does not contain $K_{\ell,\ell}$ as a subgraph.

\begin{corollary}\label{cor:traces}
Let $G$ be a $C_{\geq t}$-free graph that does not contain $K_{\ell,\ell}$ as a subgraph.
Let $X$ be a set of at least $qt/2$ vertices of $G$ such that $G[X]$ is $q$-colorable.
Let $Y$ be an independent set in $G$ that is disjoint from $X$ and such that every vertex of $Y$ has at least $\ell$ neighbors in $X$.
Then $|Y|<\ell \cdot {|X|}^{qt/2}$.
\end{corollary}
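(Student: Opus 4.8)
The plan is to combine Lemma~\ref{lem:traces0} with the forbidden-biclique assumption in the obvious way. First I would apply Lemma~\ref{lem:traces0} to $G$, $X$, and $Y$: since $G[X]$ is $q$-colorable and $|X| \geq qt/2$, and $Y$ is an independent set disjoint from $X$, the lemma yields a subset $Y' \subseteq Y$ with $|Y'| \geq |Y|/|X|^{qt/2}$ such that all vertices of $Y'$ have a common neighborhood $W \subseteq X$ in $X$. By hypothesis every vertex of $Y$, and in particular every vertex of $Y'$, has at least $\ell$ neighbors in $X$, so $W = N(u) \cap X$ (which is the same set for all $u \in Y'$) satisfies $|W| \geq \ell$.

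Next I would pull out the contradiction with $K_{\ell,\ell}$-freeness. If $|Y'| \geq \ell$, pick any $\ell$ vertices of $Y'$ and any $\ell$ vertices of $W$; these two sides are disjoint (as $Y$ is disjoint from $X$) and every vertex of the chosen $\ell$-subset of $Y'$ is adjacent to every vertex of $W$, hence to all $\ell$ chosen vertices of $W$. This is a $K_{\ell,\ell}$ subgraph of $G$, contradicting the assumption. Therefore $|Y'| < \ell$, and combining with the lower bound $|Y'| \geq |Y|/|X|^{qt/2}$ gives $|Y| < \ell \cdot |X|^{qt/2}$, as claimed.

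There is essentially no obstacle here — the only point requiring a line of care is checking that Lemma~\ref{lem:traces0} is applicable, i.e., that its hypotheses ($|X| \geq qt/2$, $G[X]$ is $q$-colorable, $Y$ independent and disjoint from $X$) are exactly the ones assumed in the corollary, which they are. One should also note the degenerate case: if $\ell \cdot |X|^{qt/2} \leq |Y|$ were somehow vacuous it does not matter, since the argument directly produces the strict inequality. The whole proof is two applications of already-established facts and fits in a few lines.
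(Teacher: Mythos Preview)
Your proposal is correct and essentially identical to the paper's own proof: apply Lemma~\ref{lem:traces0} to obtain $Y'\subseteq Y$ of size at least $|Y|/|X|^{qt/2}$ with common neighborhood of size $\geq \ell$ in $X$, then observe $|Y'|<\ell$ lest a $K_{\ell,\ell}$ appear, giving $|Y|<\ell\cdot|X|^{qt/2}$.
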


\begin{proof}
By Lemma~\ref{lem:traces0}, there exists a set $Y' \subseteq Y$ such that $|Y'| \geq \frac{|Y|}{|X|^{qt/2}}$ and all vertices of $Y'$ have the same neighborhood $N$ in $X$.
Since $|N|\geq \ell$, we necessarily have $|Y'|<\ell$, for otherwise $G$ would contain $K_{\ell,\ell}$ as a subgraph.
Thus $|Y|\leq |Y'|\cdot {|X|}^{qt/2}<\ell \cdot  {|X|}^{qt/2}$.
\end{proof}

\begin{corollary}\label{cor:traces3}
Let $G$ be a $C_{\geq t}$-free graph that does not contain $K_{\ell,\ell}$ as a subgraph.
Let $X$ be a set of at least $qt/2$ vertices of $G$ such that $G[X]$ is $q$-colorable.
Let $Y$ be an independent set in $G$ that is disjoint from $X$ and such that $|Y|\geq \ell\cdot |X|^{qt/2}$.
Then there exist sets $X' \subseteq X$ and $Y' \subseteq Y$ such that
\[|X'|> |X|-\ell\text{,}\qquad|Y'| \geq \frac{|Y|}{|X|^{qt/2}}\text{,}\]
and no vertex of $X'$ is adjacent to any vertex of $Y'$.
\end{corollary}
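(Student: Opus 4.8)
The plan is to apply Lemma~\ref{lem:traces0} essentially verbatim and then use the exclusion of $K_{\ell,\ell}$ in a one-line pigeonhole argument to control the common neighborhood. First I would invoke Lemma~\ref{lem:traces0} with the given sets $X$ and $Y$; its hypotheses hold exactly because $G$ is $C_{\geq t}$-free, $|X| \geq qt/2$, $G[X]$ is $q$-colorable, and $Y$ is an independent set disjoint from $X$. This produces a subset $Y' \subseteq Y$ with $|Y'| \geq \frac{|Y|}{|X|^{qt/2}}$ such that all vertices of $Y'$ have one and the same neighborhood in $X$, which I will call $N$.

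Next I would observe that the assumed lower bound $|Y| \geq \ell \cdot |X|^{qt/2}$ gives $|Y'| \geq \ell$. If $|N| \geq \ell$ were to hold, then picking any $\ell$ vertices of $Y'$ and any $\ell$ vertices of $N$ would yield a $K_{\ell,\ell}$ subgraph of $G$, since every vertex of $Y'$ is adjacent to all of $N$. This contradicts the hypothesis on $G$, so $|N| \leq \ell - 1$.

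Finally, I would set $X' \coloneqq X \setminus N$ and keep the same $Y'$. Then $|X'| = |X| - |N| \geq |X| - (\ell-1) > |X| - \ell$, the size bound on $|Y'|$ is precisely the one delivered by Lemma~\ref{lem:traces0}, and by construction no vertex of $X'$ lies in the common neighborhood $N$ of the vertices of $Y'$, so there is no edge between $X'$ and $Y'$. This completes the argument.

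I do not expect any real obstacle here: the statement is a short repackaging of Lemma~\ref{lem:traces0}, following the same template as Corollary~\ref{cor:traces}, and the only points needing a moment's attention are verifying that the numerical hypotheses of Lemma~\ref{lem:traces0} are satisfied and that $\frac{|Y|}{|X|^{qt/2}} \geq \ell$, both immediate from the assumptions.
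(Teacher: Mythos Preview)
Your proposal is correct and essentially identical to the paper's own proof: apply Lemma~\ref{lem:traces0} to obtain $Y'$ with a common neighborhood $N$ in $X$, use $|Y'|\geq \ell$ together with the $K_{\ell,\ell}$-freeness of $G$ to force $|N|<\ell$, and set $X'\coloneqq X\setminus N$.
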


\begin{proof}
By Lemma~\ref{lem:traces0}, there exists a set $Y' \subseteq Y$ such that $|Y'| \geq \frac{|Y|}{|X|^{qt/2}}\geq \ell$ and the vertices of $Y'$ have the same neighborhood $N$ in $X$.
Since $|Y'|\geq \ell$, we necessarily have $|N|<\ell$, for otherwise $G$ would contain $K_{\ell,\ell}$ as a subgraph.
Therefore, the sets $Y'$ and $X'\coloneqq X \setminus N$ satisfy the requested conditions.
\end{proof}

\section{Proof of Theorem~\ref{thm:main}}\label{sec:main}

We now proceed to the proof of our main result, Theorem~\ref{thm:main}. 
\mainthm*

For this, we fix a $C_{\geq t}$-free graph $G=(V,E)$ that does not contain $K_{\ell,\ell}$ as a subgraph.
Let $c$ be the constant hidden in $\Oh(\cdot)$-notation in Theorem~\ref{thm:Kostochka}, that is, every graph with degeneracy at least $c \cdot p \cdot \sqrt{p \log p}$ contains a clique minor of size at least $p$. Also, recall that $\epsilon=\epsilon_{C_t}$ is the constant provided by Theorem~\ref{thm:FoxS} for the graph $C_t$.
Let us introduce some constants that will be used in the proof.
\begin{align*}
R & \coloneqq \left\lceil 2t \cdot  (2t^3 \cdot \ell)^{2t^4/\epsilon^{2t}}\right\rceil\text{,} \\
N & \coloneqq (R \cdot 2t \cdot \ell)^2\text{,}\\
Z & \coloneqq 3 \left\lceil N^2 \cdot (\ell \cdot N^{t/2} )^{1/\epsilon} \right\rceil\text{,}\\
W & \coloneqq 2 \cdot t^3 \cdot Z^2\text{.}
\end{align*}
We will show that $G$ contains a vertex of degree at most
\[d\coloneqq c \cdot W^{2}\text{.}\]
Observe that thus, $d$ depends polynomially on $\ell$, that is, 
$d= \alpha(t,\epsilon) \cdot \ell^{\beta(t,\epsilon)}$ for some constants $\alpha$ and $\beta$ that depend on $t$ and $\epsilon$ (which itself also depends on $t$), and $\beta(t,\epsilon) \leq \frac{100t^5}{\epsilon^{2t+1}}$.
Since $C_{\geq t}$-free graphs with no $K_{\ell,\ell}$ as a subgraph are closed under taking induced subgraphs, this in fact shows that $G$ is $d$-degenerate, thereby proving Theorem~\ref{thm:main}.

The proof is split into the following six steps.

\paragraph{Step 1.}
For contradiction, suppose the minimum degree of $G$ is larger than $c \cdot W^2 \geq c \cdot W \cdot \sqrt{\log W}$.
By Theorem~\ref{thm:Kostochka}, we know that $G$ contains a clique minor $\K$ of size $W$.

\paragraph{Step 2.}
Since $W=2 \cdot t^3 \cdot Z^2$, by Lemma~\ref{lem:full} the graph $G$ contains a clique minor $\mathcal{K}'$ of size $Z$, such that every branch set of $\mathcal{K}'$ contains a full vertex (with respect to $\K'$), and each shortest path in the graph induced by each branch set has fewer than $2t$ vertices. 
Observe that
\begin{equation*}
 N^2 \cdot (\ell \cdot N^{t/2} )^{1/\epsilon} \geq N^{1/\epsilon} \qquad\qquad \text{and} \qquad\qquad
 N^2 \cdot (\ell \cdot N^{t/2} )^{1/\epsilon} \geq R \cdot N^2\text{,}
\end{equation*}
so $|\K'| =  Z = 3 \left \lceil  N^2 \cdot (\ell \cdot N^{t/2} )^{1/\epsilon} \right\rceil \geq  \left\lceil N^{1/\epsilon} \right\rceil + \left\lceil R \cdot N^2  + N^2 \cdot (\ell \cdot N^{t/2} )^{1/\epsilon} \right\rceil$.

\paragraph{Step 3.}
We partition the branch sets of $\K$ into two groups $\Aa$ and $\Bb$, so that 
\begin{equation*}
|\Aa| \geq \bigl\lceil N^{1/\epsilon} \bigr\rceil \qquad\qquad \text{and} \qquad\qquad
|\Bb| \geq \bigl\lceil R \cdot N^2  + N^2 \cdot (\ell \cdot N^{t/2} )^{1/\epsilon} \bigr\rceil\text{.}
\end{equation*}
For each branch set $K$ of $\Aa$, select any vertex of $K$ that is full in $\K'$ and let $A$ be the set comprising all the selected vertices.

\paragraph{Step 4. (idea)}
We argue that there is a subset $A' \subseteq A$ of size at least $t/2$ that is independent in $G$ and such that for every pair of distinct vertices $u,v\in A'$, we can find a large family $\Pp_{u,v}$ of induced paths, each with fewer than $2t$ vertices, with the following property: for each $P\in \Pp_{u,v}$, one endpoint of $P$ is adjacent to $u$, the other endpoint of $P$ is adjacent to $v$, and there are no more edges between $A'$ and $V(P)$ apart from those two. The size of each family $\Pp_{u,v}$ will be lower bounded by $R$. Moreover, all the paths in $\bigcup_{u,v\in A'} \Pp_{u,v}$ are pairwise vertex-disjoint.

\paragraph{Step 5. (idea)}
From each family $\Pp_{u,v}$, we extract a large subfamily $\Pp'_{u,v}$ that is partially anticomplete.
For this, we repeatedly apply Corollary~\ref{cor:FoxS} to the set $\Qq[i]$ for all relevant indices $i$, where $\Qq$ denotes the remaining subfamily, initially set to $\Pp_{u,v}$.
Since each path in $\Pp_{u,v}$ has fewer than $2t$ vertices, Theorem~\ref{thm:FoxS} is applied at most $2t-1$ times for one given pair $u,v$.
The size of each resulting family $\Pp'_{u,v}$ will be lower bounded by $(2t^2\cdot \ell)^{2t^4}$.

\paragraph{Step 6. (idea)}
Let us  enumerate the vertices in $A'$ as $a_0,a_1,\ldots,a_{t/2-1}$; again we perform the arithmetic operations on indices modulo $t/2$.
Our goal is to find an induced cycle with at least $t$ vertices to reach a contradiction.
For each $0 \leq i \leq t/2-1$, there is a large partially anticomplete family of paths $\Pp'_{a_i,a_{i+1}}$.
From each family $\Pp'_{a_i,a_{i+1}}$ we extract a single path $P_{a_i,a_{i+1}}$ such that the selected paths are pairwise anticomplete.
Now 
\[a_0 - P_{a_0,a_1} -a_1-P_{a_1,a_2}- a_2-P_{a_2,a_3}-a_3-\ldots -a_{t/2-1}- P_{a_{t/2-1},a_{0}} - a_0\]
is an induced cycle which contains at least $t$ vertices; a contradiction.

\bigskip

We are left with providing formal details for Steps~4,~5, and~6.

\paragraph{Step 4.} We first introduce an auxiliary result. 
An \emph{interference matrix} of order $M$ is a square $M\times M$ matrix $\Abb=[a_{ij}]_{i,j\in [M]}$ where every entry $a_{ij}$ is a subset of $[M]$ that does not contain $i$ or $j$. An interference matrix is \emph{$r$-bounded} if all its entries are sets of size at most $r$. Note that similar objects were defined in~\cite{BOUSQUET20152302}. 
We will use the following statement:
\begin{lemma}\label{lem:interference}
Let $s,r,M\in \N$ be such that $ \sqrt{M} \geq r > s^3$ and let $\Abb=[a_{ij}]$ be an $r$-bounded interference matrix of order $M$. Then there exists a subset $S\subseteq [M]$ such that $|S|=s$ and
\[a_{jj'} \cap S = \emptyset\qquad \text{for all distinct } j,j' \in S\text{.}\]
\end{lemma}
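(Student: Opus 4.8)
The plan is to select $S$ greedily, maintaining at each step a large "candidate pool" of indices that are still safe to add. Concretely, I would build $S=\{j_1,j_2,\ldots,j_s\}$ one element at a time, together with a nested sequence of pools $[M]=P_0\supseteq P_1\supseteq\cdots\supseteq P_s$, where $P_k\subseteq P_{k-1}\setminus\{j_k\}$ consists of those indices $j$ that could still be added without violating the condition against any already-chosen $j_i$. The invariant to keep is a lower bound on $|P_k|$ strong enough to survive $s$ steps; since each step will cost a multiplicative factor, the natural invariant is something like $|P_k|\geq M/r^k$ (or $M^{1-k/s}$, which is the same order of magnitude given $r\le\sqrt M$ and $k\le s$, but the multiplicative version is cleaner to track).

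The key step is the pool-shrinking estimate: given $j_k\in P_{k-1}$, how many indices $j\in P_{k-1}\setminus\{j_k\}$ must be removed to form $P_k$? An index $j$ is unsafe with respect to $j_k$ precisely if $j_k\in a_{j_i,j}$ for some $i<k$ \emph{with $j_i$ already chosen}, or $j\in a_{j_k,j_i}$, or — looking ahead — $j_k$ or $j$ lies in $a_{j,j'}$ for a future pair; the point is that $S$-disjointness of $a_{jj'}$ is a symmetric, triangle-like constraint, so the clean way to phrase it is: after choosing $j_1,\ldots,j_k$, the pool $P_k$ must avoid every index that appears in some set $a_{j_i,j_{i'}}$ ($i,i'\le k$), and must avoid every index $j$ for which adding $j$ would put a previously chosen $j_i$ into $a_{j,j_{i'}}$ for some $i'$. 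The crucial observation is that each fixed set $a_{j_i,j_{i'}}$ has size at most $r$, contributing at most $\binom{k}{2}\le\binom{s}{2}$ sets and hence at most $r\binom{s}{2}$ forbidden indices total from that source; and for the second source, for each previously chosen $j_i$ and each candidate position, the indices $j$ that are "dangerous" are those $j$ with $j_i\in a_{j,j_{i'}}$ — but here I need to be careful, because that is not obviously bounded by $r$ unless I set things up so that I only ever remove indices when I choose, not retroactively. The cleanest setup: when I pick $j_k$, I delete from the pool (i) $j_k$ itself; (ii) every element of $a_{j_k,j_i}$ and of $a_{j_i,j_k}$ for each $i<k$ (at most $2(k-1)r$ indices); and (iii) every $j$ in the current pool such that $j_k\in a_{j,j_i}$ or $j_k\in a_{j_i,j}$ for some $i<k$ — and this last set is bounded because $j_k$ lies in at most ... here is where $r\le\sqrt M$ enters: the number of pairs $(j,i)$ with $j_k\in a_{j,j_i}$ is not \emph{a priori} bounded, so I will instead reorganize by swapping the roles, i.e. I should also, at the moment I pick $j_i$, pre-emptively delete all $j$ with $j_i$ appearing in $a_{j,\cdot}$ restricted to the pool — but again unbounded. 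So the honest resolution is: track, for each already-chosen $j_i$, the "blocked" set $B_i=\{j\in[M]: j_i\in a_{j,j'}\text{ for some }j'\}$, which could be large, and instead bound the \emph{total} deletions differently, using that $\sum_{j,j'} |a_{jj'}|\le rM^2$ so on average an index appears in few sets — a counting/averaging argument rather than a crude union bound.

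I expect the main obstacle to be exactly this asymmetry: the condition $a_{jj'}\cap S=\emptyset$ forbids, for each chosen pair, an $r$-element set, and bounding the number of \emph{indices that appear as elements} of the relevant $a$-sets is easy (it is $\le r\binom{s}{2}$), but bounding the number of \emph{pairs} $(j,j')$ one of whose index gets hit, so that we know which candidates $j$ to drop, requires either an averaging argument (throw away the indices that appear in too many sets $a_{\cdot,\cdot}$ — at most $rM^2/T$ indices appear in more than $T$ sets) combined with the greedy selection, or a clever reformulation as a hypergraph independent-set / dependent-random-choice argument. Given the hypothesis $r>s^3$ and $r\le\sqrt M$, the bookkeeping should close with room to spare: the total number of forbidden indices after $k<s$ steps will be of order $r s^2 + (\text{averaging term})$, which is $o(M/r^{s-1})$ in the relevant regime, so a pool of the required size survives and $j_s$ can be chosen. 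I would therefore present the proof as: fix the averaging threshold, discard heavy indices once and for all at the start, then run the clean greedy selection on the light indices with a pool invariant of $M^{1-k/s}$, verifying the arithmetic inequality $r\binom{s}{2}+\text{(heavy-discard count)}< M^{1-k/s}-M^{1-(k+1)/s}$ at each step using $s^3<r\le\sqrt M$.
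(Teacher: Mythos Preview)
Your proposal has a genuine gap. You correctly isolate the central difficulty for a greedy construction: once $j_k$ is added, one must delete from the pool every index $j$ with $j_k\in a_{j,j_i}$ or $j_i\in a_{j,j_k}$ for some already-chosen $j_i$, and the number of such $j$ is \emph{not} controlled by $r$. But your proposed remedy --- discard ``heavy'' indices once at the start, then run the greedy --- is only asserted, and the arithmetic does not close. With the pool invariant $|P_k|\ge M/r^k$, the pool drops below $1$ already at $k=3$, since the hypothesis gives only $M\ge r^2$. With the alternative invariant $|P_k|\ge M^{1-k/s}$, the budget at the last step is $M^{1/s}-1$, which for $M=r^2$ and $s\ge 3$ is at most $r^{2/3}$, smaller than even the ``easy'' deletions of order $sr$ coming from the sets $a_{j_k,j_i}\cup a_{j_i,j_k}$. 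And if you make the averaging explicit --- discard all indices lying in more than $T$ sets $a_{\cdot,\cdot}$, with $T\ge 2rM$ so that at most $M/2$ indices are discarded --- then each surviving $j_i$ still lies in up to $2rM$ sets, so the hard deletions over $s$ steps can total $\Theta(srM)\gg M$. The obstacle you flagged is real, and your sketch does not overcome it.

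The paper bypasses all of this with a two-line first-moment argument. Call a triple $(i,j,k)$ \emph{bad} if $k\in a_{ij}$; the task is to find $S$ of size $s$ containing no bad triple. Pick the $s$ elements of $S$ uniformly at random. For any fixed pair $i,j$, a uniformly random third index lands in $a_{ij}$ with probability at most $r/M\le 1/r$, so the expected number of bad triples inside $S$ is at most $s^3/r<1$; hence some $S$ has none. The very asymmetry that derails the greedy --- the same index can play the role of row, column, or element of $a_{ij}$ --- is exactly what the random choice handles for free, since linearity of expectation does not care which role an index plays.
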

\begin{proof}
Call a triple $(i,j,k)\in [M]^3$ \emph{bad} if $k \in a_{i,j}$.
Given two distinct indices $i,j\in [M]$, if we select $k \in [M]$ uniformly at random, we have:
\[ \mathbb{P}(k \in a_{ij}) \le \frac{r}{M} \le \frac{r}{r^2} =\frac{1}{r}\text{.} \]
Thus, if we select a set $S$ of $s$ distinct indices uniformly at random, then we have:
\[ \mathbb{E}(\text{number of bad triples in }S) \le s^3 \cdot \frac{1}{r} < 1\text{.} \]
We conclude that there must exist a set $S$ of size $s$ for which there is no bad triple, as desired.
\end{proof}

We now proceed to the implementation of Step~4.
Recall that from Step~3 we have obtained a set $A$ of vertices and a family $\Bb$ of pairwise disjoint, connected sets of vertices of $G$ such that $A$ is disjoint with $\bigcup \Bb$ and each vertex in $A$ has a neighbor in each set in $\Bb$.
Furthermore, we have the following lower bounds on the cardinalities of $A$ and $\Bb$. 
\begin{align}
|A| & \geq \bigl\lceil N^{1/\epsilon} \bigr\rceil\text{,} \label{eq:sizeA} \\
|\Bb| & \geq \bigl\lceil R \cdot N^2  + N^2 \cdot (\ell \cdot N^{t/2} )^{1/\epsilon} \bigr\rceil\text{.} \label{eq:sizeB}
\end{align}
The following claim encapsulates the outcome of this step.

\begin{claim}\label{cl:step4}
There exists a subset $A' \subseteq A$ of size at least $t/2$
such that $A'$ is independent in $G$ and for every pair of distinct vertices $u,v\in A'$, there exists a family $\Pp_{u,v}$ of $R$ induced paths, each with fewer than $2t$ vertices, satisfying the following property: for each $P\in \Pp_{u,v}$, one endpoint of $P$ is adjacent to $u$, the other endpoint of $P$ is adjacent to~$v$, and there are no edges between $A'$ and $V(P)$ apart from those two. Moreover, all the paths in $\bigcup_{u,v\in A'} \Pp_{u,v}$ are pairwise vertex-disjoint.
\end{claim}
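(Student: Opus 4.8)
The plan is to build $A'$ greedily together with the path families, processing the pairs that still need a witnessing path one at a time. Start with the full set $A$, which by \eqref{eq:sizeA} has at least $\lceil N^{1/\epsilon}\rceil$ vertices; apply Corollary~\ref{cor:FoxS} once to replace $A$ by an independent subset $A^*$ of size at least $|A|^\epsilon \geq N$. We will then cut $A^*$ down to a set $A'$ of size $t/2$, extracting along the way, for each of the $\binom{t/2}{2} = \Oh(t^2)$ pairs, a family $\Pp_{u,v}$ of $R$ induced paths. The set $\Bb$ has size at least $R\cdot N^2 + N^2(\ell N^{t/2})^{1/\epsilon}$, and the idea is to dedicate a pool of roughly $R\cdot N^2$ branch sets to supplying the paths (one path per pair will use $\Oh(1)$ branch sets, in fact $\le 2t$ of them after contracting each branch set via its small-diameter shortest path to a full vertex) and keep the remaining $N^2(\ell N^{t/2})^{1/\epsilon}$ branch sets in reserve in case we need the extra room.

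Here is the core construction. Fix a pair $u,v$ of not-yet-processed vertices in the current working set $\widetilde A \subseteq A^*$. Every vertex of $\widetilde A$ (being full in $\K'$, hence full in $\Bb$ after the Step~3 choice, or rather: every vertex of $A$ has a neighbor in every set of $\Bb$) is adjacent to every remaining branch set $B\in\Bb$. Since each $B$ is connected with all shortest paths of length $<2t$, for each $B$ we can take a shortest path inside $G[B]$ from a neighbor of $u$ to a neighbor of $v$, giving an induced path of fewer than $2t$ vertices with one end adjacent to $u$ and the other to $v$. This gives $|\Bb|$ candidate paths for the pair $u,v$, all vertex-disjoint, but they may have edges to the other vertices of $\widetilde A$. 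To control this, form the interference matrix whose indices are the candidate paths: $a_{PP'}$ records which vertices of $\widetilde A\setminus\{u,v\}$ send an edge into $V(P)$ — actually the cleaner route is to first prune: by Corollary~\ref{cor:traces} (with $q=1$, since $\widetilde A$ is independent), any single path $P$ with $<2t$ vertices has at most $2t\cdot\ell\cdot(\text{something})$ vertices of $\widetilde A$ in its neighborhood, so candidate paths with many $\widetilde A$-neighbors are rare; and then among the surviving candidates we apply Lemma~\ref{lem:interference} to select a subfamily such that no selected path has an edge to any vertex that is an endpoint of another selected pair's witness. The bookkeeping of these two mechanisms — bounding, per path, the number of $\widetilde A$-vertices it touches, and then choosing a large interference-free subfamily while also shrinking $\widetilde A$ to kill off the boundedly-many vertices that are "overloaded" — is exactly where the numerical constants $N = (R\cdot 2t\cdot\ell)^2$, and the $N^{1/\epsilon}$, $N^2$ powers, are tuned to fit; I would set it up so that each pair consumes at most $N$ branch sets and removes at most a bounded number of vertices from $\widetilde A$, and since there are only $\binom{t/2}{2}$ pairs and we started from $|A^*|\ge N$, enough vertices and branch sets survive to the end.

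The main obstacle is the simultaneous control of three competing budgets: (i) the paths for different pairs must be pairwise vertex-disjoint, which is automatic if we take each path from a fresh branch set but forces $|\Bb|$ to absorb a factor of $R$ times the number of pairs; (ii) for a fixed pair, the $R$ selected paths must have \emph{no} edges to $A'$ other than the two prescribed ones — not just to $u$ and $v$ but to all of $A'$ — which is the reason we need both the Corollary~\ref{cor:traces}-style bound (to argue most candidate paths are "clean" with respect to the whole of $\widetilde A$) and potentially the interference matrix (to handle residual conflicts among the chosen paths themselves); and (iii) the final $A'$ must still be independent and of size $\ge t/2$, so every pruning of $\widetilde A$ must be by a bounded amount. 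I would organize the argument as a single induction/loop over the pairs, maintaining the invariant "current working set has size $\ge N - (\text{pairs processed so far})\cdot(\text{const})$ and a stock of $\ge (\text{pairs remaining})\cdot R\cdot N$ unused branch sets", and verify at the end that both invariants still give what Claim~\ref{cl:step4} demands. The fact that every branch set has bounded shortest-path diameter (from Lemma~\ref{lem:full}) is what keeps each extracted path below $2t$ vertices, which in turn is what will let Step~6 glue $t/2$ such paths into a cycle of length $\ge t$ but not wildly longer in a way that would matter; here we only need the $<2t$ bound.

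\begin{proof}[Proof sketch of Claim~\ref{cl:step4}]
We only outline the strategy; full details follow. Apply Corollary~\ref{cor:FoxS} to $A$ to obtain an independent set $A^*\subseteq A$ with $|A^*|\geq |A|^{\epsilon}\geq N$, each vertex of which is still adjacent to every set of $\Bb$. Maintain a working set $\widetilde A\subseteq A^*$ and a stock $\widetilde\Bb\subseteq\Bb$ of unused branch sets, initially $\widetilde A = A^*$ and $\widetilde\Bb=\Bb$. Process the (at most $\binom{t/2}{2}$ relevant) pairs $u,v$ of vertices destined for $A'$ one by one; we will guarantee that, for each processed pair, we find a family $\Pp_{u,v}$ of $R$ induced paths with the stated adjacency pattern toward $\widetilde A$ (and hence toward the final $A'\subseteq\widetilde A$), using up at most $R\cdot N$ branch sets from $\widetilde\Bb$ and removing at most a constant (depending only on $t$ and $\ell$, bounded by $N$) number of vertices from $\widetilde A$. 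Concretely, for a fixed pair $u,v\in\widetilde A$: every $B\in\widetilde\Bb$ yields, via a shortest $N(u)$-to-$N(v)$ path inside the connected, small-diameter set $G[B]$, an induced path $P_B$ on fewer than $2t$ vertices with one end adjacent to $u$ and the other to $v$; these paths are pairwise vertex-disjoint. By Corollary~\ref{cor:traces} applied with $q=1$ to the independent set $\widetilde A$ and the ($<2t$)-vertex set $V(P_B)$, the number of vertices of $\widetilde A$ having at least $\ell$ neighbors on $V(P_B)$, summed appropriately, and the number of branch sets $B$ whose path $P_B$ has "too many" $\widetilde A$-neighbors, are both suitably bounded; discarding the overloaded branch sets and then invoking Lemma~\ref{lem:interference} on an interference matrix recording the $\widetilde A$-interferences among the survivors, we extract a subfamily of $R$ paths that additionally have no edges to $\widetilde A$ except the two prescribed ones — after also deleting from $\widetilde A$ the boundedly many vertices responsible for unavoidable conflicts. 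Remove the used branch sets from $\widetilde\Bb$. Since $|\Bb|\geq R\cdot N^2 + N^2(\ell N^{t/2})^{1/\epsilon}\geq \binom{t/2}{2}\cdot R\cdot N$ and $|A^*|\geq N\geq t/2 + \binom{t/2}{2}\cdot N$ (for $N$ as chosen, using $t\geq 10$), both budgets suffice, and the leftover of $\widetilde A$ of size $\geq t/2$ yields the desired independent set $A'$ with all the path families intact. The $\Pp_{u,v}$ for different pairs are pairwise vertex-disjoint since they come from disjoint portions of $\widetilde\Bb$. This proves the claim.
\end{proof}
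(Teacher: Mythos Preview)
Your proposal has the right ingredients but misapplies the key lemma and suffers from a circularity that the paper's argument avoids.

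\textbf{The interference matrix is indexed by the wrong set.} In Lemma~\ref{lem:interference}, the entries $a_{ij}$ are subsets of the \emph{index set} $[M]$; the conclusion is a subset $S\subseteq[M]$ with $a_{jj'}\cap S=\emptyset$ for $j,j'\in S$. You propose indexing by candidate paths and letting $a_{PP'}$ record which vertices of $\widetilde A$ touch $V(P)$ --- but then the entries are not subsets of the index set, and the lemma says nothing. The paper instead indexes by the vertices of the independent set $A''$ (of size exactly $N$): the entry $a_{ij}$ is the set of those $i'\in[N]\setminus\{i,j\}$ for which $v_{i'}$ has a neighbour in $\bigcup_{P\in\Pp_{v_i,v_j}}V(P)$. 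A single call to Lemma~\ref{lem:interference} then returns $A'\subseteq A''$ of size $\geq t/2$ such that for every pair $u,v\in A'$ the already-chosen family $\Pp_{u,v}$ is automatically clean toward all of $A'$. No further pruning of vertices or paths is needed.

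\textbf{The greedy loop is circular, and the budget does not close.} You process pairs ``destined for $A'$'' before $A'$ is determined, and you plan to delete from $\widetilde A$ the vertices that conflict with the paths just chosen --- but each family of $R$ paths, even after discarding overloaded vertices, may send edges to up to $R\cdot(2t-1)(\ell-1)\approx\sqrt{N}$ vertices of $\widetilde A$, and nothing prevents those from including vertices you already committed to $A'$ (including the endpoints of previously processed pairs). Your stated invariant $|A^*|\geq N\geq t/2+\binom{t/2}{2}\cdot N$ is simply false for $t\geq 6$. The paper sidesteps all of this by partitioning $\Bb$ upfront among all $\binom{N}{2}$ pairs of $A''$, building each $\Pp_{u,v}$ with no reference to $A'$, bounding each interference-matrix entry by $r=R\cdot 2t\cdot\ell=\sqrt{N}$, and then selecting $A'$ at the very end. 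The set $Y$ of path-vertices with $\geq\ell$ neighbours in $A''$ is bounded globally (via Corollaries~\ref{cor:FoxS} and~\ref{cor:traces}) by $(\ell\cdot N^{t/2})^{1/\epsilon}$, which is exactly the second term in the lower bound~\eqref{eq:sizeB} on $|\Bb|$ and explains that otherwise mysterious summand.
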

\begin{proof}
Since $|A|\geq N^{1/\epsilon}\geq \ell^{1/\epsilon}$, by Corollary~\ref{thm:FoxS} combined with \eqref{eq:sizeA} we may infer that there is an independent set $A'' \subseteq A$ satisfying
\[|A''|\geq |A|^\epsilon \geq (R \cdot t \cdot \ell)^2=N\text{.}\]
By removing some vertices if necessary, we may assume $|A''|=N$.

We partition the family $\Bb$ into $\binom{N}{2}$ groups, each of size at least  $\left\lfloor \frac{|\Bb|}{\binom{N}{2}} \right\rfloor \geq \frac{|\Bb|}{N^2}$. Now, to each pair of distinct vertices $u,v  \in A''$ we assign a distinct group and denote it by $\Bb_{u,v}$.

Consider a pair $u,v\in A''$ and a branch set $C \in \Bb_{u,v}$. Since $u$ and $v$ are full vertices in the clique minor~$\K'$,
there exists a $u$-$v$ path whose all internal vertices are contained in $C$.
Let $R_C$ be a shortest such path, note that it is induced and has fewer than $2t$ internal vertices, as the subpath of $R_C$ contained in $C$ is a shortest path in $G[C]$.

Let $\Rr_{u,v}$ be the set containing, for each $C \in \Bb_{u,v}$, one such path $R_C$ with $u$ and $v$ removed. 
Clearly, $|\Rr_{u,v}| =  |\Bb_{u,v}| \geq  \frac{|\Bb|}{N^2}$ and  all the paths in $\bigcup_{u,v\in A''} \Rr_{u,v}$ are pairwise vertex-disjoint.

Let $Y$ be the set of those vertices in $\bigcup_{P\in \Rr_{u,v}} V(P)$ that have at least $\ell$ neighbors in $A''$.
We claim that $|Y|<(\ell \cdot N^{t/2})^{1/\epsilon}$.
Indeed, otherwise, by Corollary~\ref{cor:FoxS}, there would be an independent set $Y' \subseteq Y$ such that $|Y'|\geq |Y|^\epsilon \geq \ell \cdot N^{t/2}$.
However, by Corollary~\ref{cor:traces}, the size of such a set $Y'$ must be smaller than $\ell \cdot |A''|^{t/2} =\ell \cdot N^{t/2}$, a contradiction.

Note that the number of paths in $\Rr_{u,v}$ which do not contain any vertex from $Y$ is at least $|\Rr_{u,v}| - |Y|$. On the other hand, by~\eqref{eq:sizeB} we have
\[
|\Rr_{u,v}| - |Y| \geq \frac{|\Bb|}{N^2} - (\ell \cdot N^{t/2})^{1/\epsilon} \geq  \frac{ R \cdot N^2 +  N^2 \cdot (\ell \cdot N^{t/2} )^{1/\epsilon}}{N^2} -  (\ell \cdot N^{t/2})^{1/\epsilon} = R\text{.}
\] 
Therefore, we may select $\Pp_{u,v}$ to be any subfamily of $\Rr_{u,v}$ consisting of $R$ paths disjoint with $Y$. We also denote $S_{u,v}\coloneqq \bigcup_{P\in \Pp_{u,v}} V(P)$.

Consider a path $P \in \Pp_{u,v}$ and recall that it has fewer than $2t$ vertices.
Since every vertex of $P$ is adjacent to fewer than $\ell$ vertices in $A''$, we conclude that the number of vertices of $A''$ with a neighbor in $V(P)$ is at most $(2t-1)(\ell-1)$. Thus, the number of vertices of $A''$ with a neighbor in $S_{u,v}$ is at most $R \cdot (2t-1)(\ell-1) < R \cdot 2t \cdot \ell$.

Recalling that $|A''|=N$, let us enumerate vertices of $A''$ as $v_1,v_2,\ldots,v_{N}$.
Let $\Abb=[a_{i,j}]_{i,j\in [N]}$ be the $N \times N$ matrix where for $i \neq j$, the entry $a_{i,j}$ is the set of those $i' \in [N] \setminus \{i,j\}$ such that $v_{i'}$ has a neighbor in $S_{v_i,v_j}$. The diagonal entries of $\Abb$ are empty sets.
We observe that $\Abb$ is an $r$-bounded interference matrix of order $N$, where 
\[
r \coloneqq R \cdot 2t \cdot \ell = \sqrt{ (R \cdot 2t \cdot \ell)^2 } =  \sqrt{N}\text{.}
\]
Thus, we may apply Lemma~\ref{lem:interference} to $\Abb$ for $a \coloneqq \lfloor (r-1)^{1/3} \rfloor$. In this way, we obtain a subset $A' \subseteq A''$ such that for every pair of distinct vertices $u,v \in A'$, the vertices in $S_{u,v}$ have no neighbors in $A' \setminus \{u,v\}$, and 
\[
|A'|=a= \lfloor ( R \cdot 2t \cdot \ell - 1)^{1/3} \rfloor \geq t/2\text{.}
\]
We conclude that the set $A'$, along with families $\{\Pp_{u,v}\}_{u,v \in A'}$, satisfies all the required properties.
\end{proof}

\paragraph{Step 5.}
At this point, we have constructed an independent set $A'$ of size at least $t/2$ and, for each pair of distinct vertices $u,v \in A'$, a suitable family of induced paths $\Pp_{u,v}$ of size $R$.
Our current goal is to extract a sufficiently large subfamily $\Pp'_{u,v}\subseteq \Pp_{u,v}$ that is partially anticomplete. 
The main idea is captured by the following lemma.

\begin{lemma}\label{lem:makepartiallyanticompleye}
Let $G$ be a $C_{\geq t}$-free graph that does not contain $K_{\ell,\ell}$ as a subgraph and
let $\Pp$ be a family of at least $\ell^{1/\epsilon^{k}}$ vertex-disjoint, ordered induced paths in $G$, each with exactly $k$ vertices.
Then there exists a subfamily $\Pp' \subseteq \Pp$,
such that $|\Pp'| \geq |\Pp|^{\epsilon^{k}}$ and $\Pp'$ is partially anticomplete.
\end{lemma}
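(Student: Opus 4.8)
The plan is to build $\Pp'$ by a greedy, coordinate-by-coordinate refinement: we process the positions $i = 1, 2, \dots, k$ in order, and at position $i$ we pass to a subfamily of paths whose $i$-th vertices form an independent set of $G$. Formally, I would set $\Qq_0 \coloneqq \Pp$ and, for each $i \in [k]$, let $Y_i \coloneqq \{P[i] : P \in \Qq_{i-1}\}$. As the paths in $\Pp$ are pairwise vertex-disjoint, the map $P \mapsto P[i]$ is injective on $\Qq_{i-1}$, so $|Y_i| = |\Qq_{i-1}|$. I would then apply Corollary~\ref{cor:FoxS} to $Y_i$ to obtain an independent set $Y_i' \subseteq Y_i$ with $|Y_i'| \ge |Y_i|^{\epsilon}$, and set $\Qq_i \coloneqq \{P \in \Qq_{i-1} : P[i] \in Y_i'\}$; thus $|\Qq_i| = |Y_i'| \ge |\Qq_{i-1}|^{\epsilon}$. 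The output will be $\Pp' \coloneqq \Qq_k$.

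The first thing to check is that Corollary~\ref{cor:FoxS} is applicable at each step, i.e.\ that $|Y_i| \ge \ell^{1/\epsilon}$ for all $i \in [k]$. A straightforward induction gives $|\Qq_{i}| \ge |\Pp|^{\epsilon^{i}}$, so $|Y_i| = |\Qq_{i-1}| \ge |\Pp|^{\epsilon^{i-1}} \ge \ell^{\epsilon^{i-1}/\epsilon^{k}} = \ell^{1/\epsilon^{k-i+1}}$, using the hypothesis $|\Pp| \ge \ell^{1/\epsilon^{k}}$. Since $0 < \epsilon \le 1$ and $k - i + 1 \ge 1$, this is at least $\ell^{1/\epsilon}$, as needed. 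Carrying the same induction to $i = k$ also yields the required bound $|\Pp'| = |\Qq_k| \ge |\Pp|^{\epsilon^{k}}$.

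It then remains to verify that $\Pp' = \Qq_k$ is partially anticomplete. The paths in $\Pp'$ are pairwise vertex-disjoint and each has exactly $k$ vertices, since they are members of $\Pp$. Fix $i \in [k]$; because $\Qq_k \subseteq \Qq_i$, we have $\{P[i] : P \in \Pp'\} \subseteq Y_i'$, and $Y_i'$ is independent in $G$, so for any two distinct $P_1, P_2 \in \Pp'$ the vertices $P_1[i]$ and $P_2[i]$ are non-adjacent. As this holds for every coordinate $i$, the family $\Pp'$ is partially anticomplete, and the lemma follows.

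I do not anticipate a genuine obstacle here: the argument is nothing more than applying Corollary~\ref{cor:FoxS} once per coordinate, and the only care needed is the bookkeeping of the exponent, which loses a factor of $\epsilon$ at each of the $k$ steps — precisely matching the threshold $\ell^{1/\epsilon^{k}}$ in the hypothesis and the exponent $\epsilon^{k}$ in the conclusion. (Note that inducedness of the paths plays no role in this proof; it is recorded in the statement only because it is needed where the lemma is invoked.)
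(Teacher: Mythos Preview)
Your proof is correct and is essentially the same as the paper's: both arguments apply Corollary~\ref{cor:FoxS} once per coordinate, losing a factor of $\epsilon$ in the exponent each time. The only cosmetic difference is that the paper phrases this as an induction on $k$ (peeling off the last vertex and invoking the induction hypothesis on the truncated paths), whereas you unroll the same recursion into an explicit iteration over the coordinates $i=1,\dots,k$.
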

\begin{proof}
We prove the statement by induction on $k$.
Suppose first that $k=1$, so each path in $\Pp$ is a single vertex.
Let $W=\bigcup_{P\in \Pp} V(P)$.
By the assumption we know that $|W| = |\Pp| \geq \ell^{1/\epsilon}$, so by Corollary~\ref{cor:FoxS} there exists an independent set $W' \subseteq W$ of size at least $|\Pp|^\epsilon$. Thus $W'$, or more formally the family of one-vertex paths, each consisting of a distinct element of $W'$, satisfies the statement.

So suppose that $k \geq 2$ and the lemma holds for all $k'<k$.
For a path $P \in \Pp$, let $P^*$ be the path obtained from $P$ by removing the last vertex.
Let $\Qq \coloneqq \{P^* \colon P \in \Pp\}$. Clearly we have $|\Qq| = |\Pp| \geq \ell^{1/\epsilon^k}$, hence by induction there exists a partially anticomplete subfamily $\Qq' \subseteq \Qq$ of size at least $|\Pp|^{\epsilon^{k-1}}$.

Let $Y$ be the set consisting of all the last vertices of those paths $P \in \Pp$ for which $P^* \in \Qq'$. Since $|Y| = |\Qq'| \geq |\Pp|^{\epsilon^{k-1}} \geq \ell^{1/\epsilon}$, by Corollary~\ref{cor:FoxS} there exists an independent set $Y' \subseteq Y$ of size at least $|Y|^\epsilon \geq |\Pp|^{\epsilon^{k}} $. It is straightforward to verify that the family $\Pp' \subseteq \Pp$ consisting of all paths from $\Pp$ whose last vertex belongs to $Y'$, satisfies all the required properties.
\end{proof}

Lemma~\ref{lem:makepartiallyanticompleye} yields the following statement, which summarizes the outcome of Step~5.

\begin{claim}
For every distinct $u,v \in A'$, there is a family $\Pp_{u,v}' \subseteq \Pp_{u,v}$ of size at least  $(2t^2\cdot \ell)^{2t^4}$, which is partially anticomplete.
\end{claim}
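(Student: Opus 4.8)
The plan is to deduce this claim from Lemma~\ref{lem:makepartiallyanticompleye} by a single pigeonhole step. Fix distinct $u,v\in A'$. By Claim~\ref{cl:step4}, $\Pp_{u,v}$ is a family of $R$ pairwise vertex-disjoint induced paths, each having fewer than $2t$ vertices, with one endpoint adjacent to $u$ and the other adjacent to $v$. The obstruction to applying Lemma~\ref{lem:makepartiallyanticompleye} directly is that the paths in $\Pp_{u,v}$ need not all have the same number of vertices. So first I would orient every $P\in\Pp_{u,v}$ from its endpoint adjacent to $u$ to its endpoint adjacent to $v$ (this is well defined: $A'$ is independent, so $u$ and $v$ are distinct and non-adjacent, hence $P$ has at least one vertex), and partition $\Pp_{u,v}$ according to the number of vertices of the path. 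Since every path has between $1$ and $2t-1$ vertices, there are at most $2t-1$ classes, so by pigeonhole some class $\Qq$ consists of paths all having exactly $k$ vertices, for some $1\le k\le 2t-1$, and $|\Qq|\ge R/(2t-1)\ge R/(2t)$. Note that $\Qq$ is then a family of vertex-disjoint, ordered induced paths each with exactly $k$ vertices, which is precisely the input format required by Lemma~\ref{lem:makepartiallyanticompleye}.

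Next I would check that $\Qq$ is large enough to invoke the lemma, i.e.\ that $|\Qq|\ge\ell^{1/\epsilon^{k}}$. From the definition $R=\lceil 2t\cdot(2t^3\ell)^{2t^4/\epsilon^{2t}}\rceil$ we get $R/(2t)\ge(2t^3\ell)^{2t^4/\epsilon^{2t}}\ge\ell^{2t^4/\epsilon^{2t}}$; and since $t\ge 1$, $k\le 2t$, and $0<\epsilon\le 1$, we have $2t^4/\epsilon^{2t}\ge 1/\epsilon^{2t}\ge 1/\epsilon^{k}$, so indeed $|\Qq|\ge R/(2t)\ge\ell^{1/\epsilon^{k}}$. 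Applying Lemma~\ref{lem:makepartiallyanticompleye} to $\Qq$ then yields a partially anticomplete subfamily $\Pp'_{u,v}\subseteq\Qq\subseteq\Pp_{u,v}$ with $|\Pp'_{u,v}|\ge|\Qq|^{\epsilon^{k}}\ge (R/(2t))^{\epsilon^{k}}$.

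Finally I would bound this quantity from below. Since $R/(2t)\ge 1$ and $\epsilon^{k}\ge\epsilon^{2t}$, we have $(R/(2t))^{\epsilon^{k}}\ge(R/(2t))^{\epsilon^{2t}}\ge\bigl((2t^3\ell)^{2t^4/\epsilon^{2t}}\bigr)^{\epsilon^{2t}}=(2t^3\ell)^{2t^4}\ge(2t^2\ell)^{2t^4}$, which is exactly the promised bound. Together with the fact that $\Pp'_{u,v}$ is partially anticomplete and contained in $\Pp_{u,v}$, this establishes the claim. The only real difficulty is the bookkeeping: one must make sure that $R$ was defined with enough slack to absorb simultaneously the factor $2t$ lost to the pigeonhole step and the drop from the exponent $\epsilon^{k}$ to the worst case $\epsilon^{2t}$ — which is exactly why $R$ carries the exponent $2t^4/\epsilon^{2t}$ rather than something smaller. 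Everything else is an immediate consequence of Claim~\ref{cl:step4} and Lemma~\ref{lem:makepartiallyanticompleye}.
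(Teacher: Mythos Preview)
Your proposal is correct and follows essentially the same approach as the paper: pigeonhole on the path length to obtain a subfamily of paths all with exactly $k$ vertices, orient them from the $u$-side to the $v$-side, apply Lemma~\ref{lem:makepartiallyanticompleye}, and then check that the resulting bound is at least $(2t^2\ell)^{2t^4}$. The only cosmetic differences are that the paper uses $R/(2t-1)$ rather than $R/(2t)$ and keeps the slightly stronger intermediate bound $(2t^3\ell)^{2t^4}$.
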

\begin{proof}
Fix $u,v \in A'$ and recall that $|\Pp_{u,v}|\geq R = \left\lceil 2t \cdot  (2t^3 \cdot \ell)^{2t^4/\epsilon^{2t}}\right\rceil$.
Furthermore, each path in $\Pp_{u,v}$ has fewer than $2t$ vertices.
Therefore, there exists $k \in [2t-1]$ and a subfamily $\Pp_{u,v}'' \subseteq \Pp_{u,v}$ consisting of at least $R/(2t-1)\geq (2t^3 \cdot \ell)^{2t^4 / \epsilon^{2t}}$ paths from $\Pp_{u,v}$ such that each path in $\Pp_{u,v}''$ has exactly $k$ vertices.
Observe that
\[|\Pp_{u,v}''| \geq (2t^3 \cdot \ell)^{2t^4 / \epsilon^{2t}} \geq \ell^{1/\epsilon^{2t}} \geq \ell^{1/\epsilon^{k}}\text{.}\]
Therefore, we may apply Lemma~\ref{lem:makepartiallyanticompleye} to the family $\Pp_{u,v}''$, where each path in $\Pp''_{u,v}$ is ordered so that the first vertex is the endpoint adjacent to $u$ and the last vertex is the endpoint adjacent to $v$.
This yields a partially anticomplete subfamily $\Pp_{u,v}' \subseteq \Pp_{u,v}''$ of size at least
\[
|\Pp_{u,v}''|^{\epsilon^{k}} \geq (2t^3 \cdot \ell)^{2t^4 \cdot \epsilon^{k} / \epsilon^{2t}} \geq (2t^3 \cdot \ell)^{2t^4}\text{.}
\]
This completes the proof.
\end{proof}

\paragraph{Step 6.} Finally, from each family $\Pp'_{u,v}$ we wish to extract one path so that all the selected paths are pairwise anticomplete.
Let us start with the following auxiliary lemma.

\begin{lemma}\label{lem:newtraces2}
Let $G$ be a $C_{\geq t}$-free graph that does not contain $K_{\ell,\ell}$ as a subgraph.
Let $\Pp$ and $\Qq$ be two partially anticomplete families of ordered induced paths in $G$,
each with fewer than $2t$ vertices,
such that any two paths in $\Pp \cup \Qq$ are vertex-disjoint.
Suppose that $|\Qq|\geq L \cdot |\Pp|^{(2t-1)^2t/2}$ for some $L\geq \ell$.
Then there are subfamilies $\Pp' \subseteq  \Pp$ and $\Qq' \subseteq  \Qq$
such that 
\[|\Pp'| \geq |\Pp| - (\ell-1)(2t-1)\text{,}\qquad\qquad |\Qq'| \geq  L\text{,}\]
and every path in $\Pp'$ is anticomplete to every path in $\Qq'$.
\end{lemma}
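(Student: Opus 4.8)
\textbf{Plan for the proof of Lemma~\ref{lem:newtraces2}.}
The strategy is to process the paths of $\Pp$ one vertex-position at a time, in the spirit of the proof of Lemma~\ref{lem:makepartiallyanticompleye}, using Corollary~\ref{cor:traces3} (the ``no $K_{\ell,\ell}$'' version of the VC-dimension lemma) to peel off a small number of ``bad'' paths from $\Pp$ while keeping $\Qq$ large. Fix the common length of the paths in $\Pp$, say $k\le 2t-1$ vertices (if the paths in $\Pp$ have different lengths we may first pass to a subfamily of size at least $|\Pp|/(2t-1)$ of paths of a single length; since the quantitative loss promised in the statement, $(\ell-1)(2t-1)$, is additive rather than multiplicative, one should in fact simply iterate over all $k\in[2t-1]$, handling the paths of length $k$ as a block and discarding at most $\ell-1$ of them per position). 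The key point enabling the use of Corollary~\ref{cor:traces3} is that $\Pp$ itself is partially anticomplete, so for each fixed position $i\in[k]$ the set $X_i \coloneqq \{P[i] : P\in\Pp\}$ is \emph{independent} in $G$ (partial anticompleteness says $P_1[i]$ and $P_2[i]$ are non-adjacent for all $P_1\ne P_2$). Thus $G[X_i]$ is $1$-colorable, and we will apply the corollaries with $q=1$ and the parameter ``$qt/2$'' equal to $t/2$; note $|\Pp|^{t/2}$ is exactly the factor appearing when we treat each $X_i$ as ``$X$''. Similarly, for each position $j$ in the paths of $\Qq$, the set $Y_j\coloneqq\{Q[j]:Q\in\Qq\}$ is independent.

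The core iteration runs over all pairs $(i,j)$ with $i\in[k]$ and $j$ a position in $\Qq$'s paths (at most $(2t-1)^2$ such pairs). Maintain a current subfamily $\Pp^{\mathrm{cur}}\subseteq\Pp$ and $\Qq^{\mathrm{cur}}\subseteq\Qq$, initially the full families. At step $(i,j)$: set $X \coloneqq \{P[i] : P\in \Pp^{\mathrm{cur}}\}$, which is independent, hence $G[X]$ is $1$-colorable; set $Y\coloneqq\{Q[j] : Q\in\Qq^{\mathrm{cur}}\}$, which is independent and disjoint from $X$. If at this point $|Y| \ge \ell\cdot|X|^{t/2}$, apply Corollary~\ref{cor:traces3} to obtain $X'\subseteq X$ with $|X'|>|X|-\ell$ and $Y'\subseteq Y$ with $|Y'|\ge |Y|/|X|^{t/2}$ such that no vertex of $X'$ is adjacent to any vertex of $Y'$; update $\Pp^{\mathrm{cur}}$ to the paths whose $i$th vertex lies in $X'$ (discarding fewer than $\ell$ paths) and $\Qq^{\mathrm{cur}}$ to the paths whose $j$th vertex lies in $Y'$. (The hypothesis $|\Qq|\ge L\cdot|\Pp|^{(2t-1)^2t/2}$ with $L\ge\ell$ is precisely what guarantees that the bound $|Y|\ge \ell\cdot|X|^{t/2}$ keeps holding throughout: $|X|\le|\Pp|$ always, and dividing by $|\Pp|^{t/2}$ at most $(2t-1)^2$ times still leaves $|\Qq^{\mathrm{cur}}|\ge L$.) After all pairs $(i,j)$ have been processed, every $i$th-position vertex surviving in $\Pp^{\mathrm{cur}}$ is non-adjacent to every $j$th-position vertex surviving in $\Qq^{\mathrm{cur}}$, for all $i,j$ — that is, every surviving path of $\Pp$ is anticomplete to every surviving path of $\Qq$. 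Counting the losses: at most $(2t-1)^2$ divisions by $|\Pp|^{t/2}$ on the $\Qq$ side leave $|\Qq^{\mathrm{cur}}|\ge L$, and at most $\ell-1$ paths of $\Pp$ are discarded per $i$-position (we only ever shrink $X$ at position $i$), i.e.\ at most $(\ell-1)(2t-1)$ paths of $\Pp$ total, giving $|\Pp'|\ge|\Pp|-(\ell-1)(2t-1)$. Set $\Pp'\coloneqq\Pp^{\mathrm{cur}}$, $\Qq'\coloneqq\Qq^{\mathrm{cur}}$.

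The main obstacle — and the place requiring care — is the bookkeeping of the exponent $(2t-1)^2t/2$ versus the number of iterations and the per-iteration loss. One must check that dividing $|\Qq|$ by $|X|^{t/2}\le|\Pp|^{t/2}$ at each of the $\le(2t-1)^2$ steps, while the corollary's precondition $|Y|\ge\ell\cdot|X|^{t/2}$ needs to be re-verified at each step against the \emph{current} (only shrinking) $|X|$, together leave at least $L$ paths in $\Qq'$; this is exactly matched by the hypothesis. A secondary subtlety is the per-position loss on the $\Pp$ side: we discard up to $\ell-1$ paths of $\Pp$ once for each position $i$ (not once for each pair $(i,j)$), because after fixing position $i$ via $X'$ we never re-enlarge it, so subsequent steps at the same $i$ with different $j$ only shrink $Y$. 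One should present the iteration so that this is transparent — e.g.\ by first looping over $i$, and for each $i$ looping over $j$, and noting that the ``$X$'' used for a fixed $i$ is non-increasing. Finally, a minor point: if the paths of $\Pp$ do not all have the same number of vertices, one cannot directly define $X_i$; the clean fix is to observe that partial anticompleteness already forces a common length within any partially anticomplete family, so $\Pp$ is automatically length-homogeneous and no preprocessing is needed.
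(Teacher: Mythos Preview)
Your overall plan—peel off a few bad $\Pp$-paths while shrinking $\Qq$ via repeated applications of Corollary~\ref{cor:traces3}—is the same as the paper's, but the way you organise the iteration is different, and your bookkeeping on the $\Pp$ side breaks down.

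Concretely: you iterate over pairs $(i,j)$ and at each step apply Corollary~\ref{cor:traces3} with $q=1$ to the independent set $X=\{P[i]:P\in\Pp^{\mathrm{cur}}\}$ and $Y=\{Q[j]:Q\in\Qq^{\mathrm{cur}}\}$. Each such application returns $X'\subseteq X$ with $|X'|>|X|-\ell$; that is, it may discard up to $\ell-1$ paths from $\Pp^{\mathrm{cur}}$. Your claim that ``subsequent steps at the same $i$ with different $j$ only shrink $Y$'' is not justified: Corollary~\ref{cor:traces3} always outputs both a shrunk $X'$ \emph{and} a shrunk $Y'$, and for a new $j$ the common neighbourhood $N\subseteq X$ of the new $Y'$ has no reason to lie inside the set already removed at earlier $j$'s. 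Thus across up to $(2t-1)^2$ steps you may lose up to $(\ell-1)(2t-1)^2$ paths from $\Pp$, which is the square of the bound the lemma promises. (Your $\Qq$-side arithmetic is fine, and your observation that partial anticompleteness forces a common length is correct.)

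The paper avoids this by bundling all positions of $\Pp$ together: it takes $X=\bigcup_{P\in\Pp}V(P)$, which is $(2t-1)$-colourable (one colour per position), and applies Corollary~\ref{cor:traces3} with $q=2t-1$ once for each position $j$ of $\Qq$—only $k_\Qq\le 2t-1$ iterations in total. Each iteration removes at most $\ell-1$ \emph{vertices} from $X$, hence at most $(\ell-1)(2t-1)$ paths from $\Pp$ overall, matching the stated bound. The trade-off is that the per-step exponent on $|X|$ becomes $(2t-1)t/2$ rather than $t/2$, but with only $2t-1$ iterations the total exponent on the $\Qq$ side is again $(2t-1)^2t/2$, matching the hypothesis.
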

\begin{proof}
Let $k_\Pp\in [2t-1]$ be the common number of vertices on each path from $\Pp$, and similarly define $k_\Qq$ for $\Qq$.
Recall that $\Pp$ is partially anticomplete, hence for each $i\in [k_\Pp]$ the set $\Pp[i]$ is independent in $G$.  
In particular, this implies that the graph $G[\bigcup_{P\in \Pp} V(P)]$ is $(2t-1)$-colorable.
Analogous conclusions can be drawn about the family $\Qq$.

Set $X_0 \coloneqq\bigcup_{P\in \Pp} V(P)$ and $\Qq_0 \coloneqq \Qq$.
We extract $\Pp'$ and $\Qq'$ iteratively, in $k_\Qq$ rounds.
In round $i$ we will obtain a subset $X_i \subseteq X_{i-1}$ and a subfamily $\Qq_i \subseteq \Qq_{i-1}$
satisfying the following  properties:
\begin{enumerate}[label={(P\arabic*)}]
\item\label{p:Xi} $|X_i| \geq |X_0| - i \cdot (\ell-1)$,
\item\label{p:Qi} $|\Qq_i| \geq L \cdot \left(|X_0|^{(2t-1)t/2}\right)^{(2t-1-i)}$, and
\item\label{p:anti} there are no edges with one endpoint in $X_i$ and the other in $\bigcup_{j=1}^{i} \Qq_{i}[j]$.
\end{enumerate}
Suppose for a moment that we construct $X_{k_{\Qq}}$ and $\Qq_{k_{\Qq}}$ that satisfy the properties listed above.
Let $\Pp'$ to be the set of those paths in $\Pp$, whose vertex sets are contained in $X_{k_{\Qq}}$.
Since $|X_0 \setminus X_{k_{\Qq}}| \leq k_\Qq(\ell-1)\leq  (2t-1)(\ell-1)$ and paths in $\Pp$ are vertex-disjoint, we have $|\Pp'| \geq |\Pp|-(2t-1)(\ell-1)$.
Similarly, if we define $\Qq'\coloneqq \Qq_{k_\Qq}$, then $|\Qq'|\geq L\cdot \left(|X_0|^{(2t-1)t/2}\right)^{(2t-1-k_\Qq)}\geq L$. 
Finally, the paths in $\Pp'$ are anticomplete to the paths in $\Qq'$, so $\Pp'$ and $\Qq'$ satisfy all the required conditions.

Observe that $X_0$ and $\Qq_0$ satisfy properties~\ref{p:Xi},~\ref{p:Qi}, and~\ref{p:anti} for $i=0$.
So let $i \in [k_{\Qq}]$ and suppose that we have already constructed $X_{i-1}$ and $\Qq_{i-1}$.
Let $Y\coloneqq \Qq_{i-1}[i]$. By~\ref{p:Qi} for round $i-1$, we have
\[|Y| \geq L \cdot \left(|X_0|^{(2t-1)t/2}\right)^{(2t-1-(i-1))} \geq L \cdot |X_0|^{(2t-1)t/2} \geq \ell \cdot |X_0|^{(2t-1)t/2} \geq \ell \cdot |X_{i-1}|^{(2t-1)t/2}\text{.}\]
We conclude that the sets $X_{i-1}$ and $Y$ satisfy the prerequisites of Corollary~\ref{cor:traces3} for $q=2t-1$. Therefore, there exist $X' \subseteq X_{i-1}$ and $Y' \subseteq Y$ such that no vertex of $X'$ is adjacent to any vertex of $Y'$, 
\[ |X'| \geq |X_{i-1}|-(\ell-1) \geq \bigl( |X_0| - (i-1) \cdot (\ell-1) \bigr) - (\ell-1) = |X_0| - i \cdot (\ell-1) \]
and
\begin{align*} |Y'| & \geq \frac{|Y|}{|X_{i-1}|^{(2t-1)t/2}} \geq \frac{L \cdot \left(|X_0|^{(2t-1)t/2}\right)^{(2t-1-(i-1))}}{|X_{i-1}|^{(2t-1)t/2}} \\
& \geq \frac{L \cdot \left(|X_0|^{(2t-1)t/2}\right)^{(2t-1-(i-1))}}{|X_{0}|^{(2t-1)t/2}}= L \cdot \left(|X_0|^{(2t-1)t/2}\right)^{(2t-1-i)}\text{.}\end{align*}
We may now define $\Qq_i$ as the set of those paths in $\Qq_{i-1}$ whose $i$th vertex belongs to $Y'$. Clearly sets $X_i$ and $\Qq_i$ satisfy properties ~\ref{p:Xi},~\ref{p:Qi}, and~\ref{p:anti}. Thus the proof is complete.
\end{proof}

From Lemma~\ref{lem:newtraces2} we may derive the following statement.

\begin{lemma}\label{lem:Pt_manypaths}
Let $G$ be a $C_{\geq t}$-free graph that does not contain $K_{\ell,\ell}$ as a subgraph.
Suppose $k \leq t$ is a positive integer and $\Qq_1,\ldots,\Qq_k$ are partially anticomplete families of ordered induced paths in $G$,
each with fewer than $2t$ vertices,
such that all the paths in $\bigcup_{i=1}^k \Qq_i$ are pairwise vertex-disjoint and $|\Qq_i|\geq (2t^2\cdot \ell)^{k\cdot 2t^3}$ for each $i\in [k]$.
Then for each $i \in [k]$ one can select a path $Q_i \in \Qq_i$ so that the paths $\{Q_i\colon i\in [k]\}$ are pairwise anticomplete.
\end{lemma}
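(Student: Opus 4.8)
The plan is to induct on $k$. The base case $k=1$ is trivial: $\Qq_1$ has at least $(2t^2\ell)^{2t^3}\ge 1$ members, so any path in it serves as $Q_1$, and there are no pairs to check. For the inductive step with $k\ge 2$, I would reduce to the $(k-1)$-family case by ``retiring'' $\Qq_1$: I will produce a subfamily $\Qq_1^{\mathrm{fin}}\subseteq\Qq_1$ containing at least one path, together with subfamilies $\Qq_i^{(1)}\subseteq\Qq_i$ for $i\ge 2$, each of size at least $(2t^2\ell)^{(k-1)\cdot 2t^3}$, such that every path of $\Qq_1^{\mathrm{fin}}$ is anticomplete to every path of every $\Qq_i^{(1)}$. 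Then the induction hypothesis applies to $\Qq_2^{(1)},\dots,\Qq_k^{(1)}$ (which are still partially anticomplete families of induced paths with fewer than $2t$ vertices, pairwise vertex-disjoint, of the required size) and yields pairwise anticomplete paths $Q_2,\dots,Q_k$ with $Q_i\in\Qq_i^{(1)}$; choosing any $Q_1\in\Qq_1^{\mathrm{fin}}$ then gives a path anticomplete to all of $Q_2,\dots,Q_k$, so $\{Q_1,\dots,Q_k\}$ works.

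To build $\Qq_1^{\mathrm{fin}}$ and the $\Qq_i^{(1)}$, the key idea is to first shrink $\Qq_1$ to a \emph{small} subfamily $\Qq_1^{[0]}$ of size exactly $p\coloneqq(k-1)(\ell-1)(2t-1)+1$ (harmless, as $|\Qq_1|$ is enormous), and then to clean the families $\Qq_2,\dots,\Qq_k$ one at a time against the gradually shrinking copy of $\Qq_1^{[0]}$. Concretely, for $i=2,\dots,k$ in order I apply Lemma~\ref{lem:newtraces2} with the ``mostly preserved'' side $\Pp\coloneqq\Qq_1^{[i-2]}$, the ``shrunk'' side $\Qq\coloneqq\Qq_i$, and parameter $L\coloneqq(2t^2\ell)^{(k-1)\cdot 2t^3}$, obtaining $\Qq_1^{[i-1]}\subseteq\Qq_1^{[i-2]}$ and $\Qq_i^{(1)}\subseteq\Qq_i$ with $\Qq_1^{[i-1]}$ anticomplete to $\Qq_i^{(1)}$. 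Setting $\Qq_1^{\mathrm{fin}}\coloneqq\Qq_1^{[k-1]}$ and using that the copies $\Qq_1^{[0]}\supseteq\Qq_1^{[1]}\supseteq\dots$ are nested, $\Qq_1^{\mathrm{fin}}$ is anticomplete to each $\Qq_i^{(1)}$.

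The one point that takes thought — and where I expect the only real work — is the size bookkeeping. Each application of Lemma~\ref{lem:newtraces2} requires $|\Qq_i|\ge L\cdot|\Qq_1^{[i-2]}|^{\sigma}$ with $\sigma=(2t-1)^2t/2$ and $L\ge\ell$; since $\sigma>1$, one \emph{cannot} apply the lemma to two families both of the full size $m\coloneqq(2t^2\ell)^{k\cdot 2t^3}$, which is precisely why the preliminary truncation of $\Qq_1$ to size $p$ is needed. With it, $|\Qq_1^{[i-2]}|\le p<2t^2\ell$ (here I use $k\le t$, so $k-1<t$) and $\sigma<2t^3$, hence $L\cdot p^{\sigma}<(2t^2\ell)^{(k-1)2t^3+2t^3}=m$, so the hypothesis holds, and clearly $L\ge\ell$. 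On the other side, after the $k-1$ cleanings $\Qq_1^{[0]}$ has lost at most $(k-1)(\ell-1)(2t-1)=p-1$ paths, so $\Qq_1^{\mathrm{fin}}$ is nonempty, while each $\Qq_i^{(1)}$ retains at least $L=(2t^2\ell)^{(k-1)\cdot 2t^3}$ paths — exactly the threshold demanded by the induction hypothesis for $k-1$. Everything else (that subfamilies of partially anticomplete, pairwise vertex-disjoint families of ordered induced paths with fewer than $2t$ vertices are again of this form, and that the anticompleteness relations chain through the nesting) is routine.
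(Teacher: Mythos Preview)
Your proposal is correct and follows essentially the same approach as the paper: induct on $k$, truncate $\Qq_1$ to a small subfamily, then iteratively apply Lemma~\ref{lem:newtraces2} against each $\Qq_i$ ($i\ge 2$) to obtain both a surviving path in $\Qq_1$ and subfamilies of size at least $(2t^2\ell)^{(k-1)\cdot 2t^3}$ to which the induction hypothesis applies. The only cosmetic difference is the truncation size: the paper cuts $\Qq_1$ down to $2t^2\ell$ and bounds the loss per round by $2t\ell$, whereas you use the tighter $p=(k-1)(\ell-1)(2t-1)+1$ and the exact loss $(\ell-1)(2t-1)$; both choices make the bookkeeping go through in the same way.
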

\begin{proof}
We proceed by induction on $k$. For $k=1$ we may choose an arbitrary path $Q_1 \in \Qq_1$.
So suppose $k \geq 2$ and the claim holds for all $k'<k$.

Select a subfamily $\Qq_1' \subseteq \Qq_1$ of size $2t^2\cdot \ell$.
We will iteratively apply Lemma~\ref{lem:newtraces2} to $\Qq_1'$ and $\Qq_i$, for $i$ from $2$ to $k$, deleting some paths of $\Qq_1'$ at every round.
More formally, at round $i \in \{2,\ldots,k\}$, let us consider the subfamily $\Qq'_{1,i-1}\subseteq \Qq'_1$ consisting of paths not removed in the previous rounds.
Initially $\Qq'_{1,1}=\Qq'_{1}$.

We proceed to the description of round $i$. Note that 
\begin{align*}
    |\Qq_{i}| & \geq (2t^2 \cdot \ell)^{k \cdot 2t^3} = (2t^2 \cdot \ell)^{(k-1) \cdot 2t^3} \cdot (2t^2 \cdot \ell)^{2t^3} \\
    & \geq (2t^2 \cdot \ell)^{(k-1) \cdot 2t^3} \cdot |\Qq'_{1,i-1}|^{2t^3} \geq (2t^2 \cdot \ell)^{(k-1) \cdot 2t^3} \cdot  |\Qq'_{1,i-1}|^{(2t-1)^2t/2}\text{.}
\end{align*}
Hence, we may apply Lemma~\ref{lem:newtraces2} to the families $\Qq'_{1,i-1}$ and $\Qq_{i}$. Thus we
extract subfamilies $\Qq'_{1,i}\subseteq \Qq'_{1,i-1}$ and $\Qq'_{i}\subseteq \Qq_i$
such that every path from $\Qq'_{1,i}$ is anticomplete to every path from $\Qq'_i$,
and we have $|\Qq'_{1,i}| \geq |\Qq_{1,i-1}| - 2t \cdot \ell$ and $|\Qq'_i| \geq  (2t^2 \cdot \ell)^{(k-1) \cdot 2t^3}$.
Thus, after the last step, we obtain a subfamily $\Qq'_{1,k} \subseteq \Qq_1$ of size at least $|\Qq_1'|- (k-1) \cdot 2t \cdot \ell = 2t^2 \cdot \ell - (k-1) \cdot 2t \cdot \ell > 0$.
Let $Q_1$ be an arbitrary path from $\Qq'_{1,k}$.

Now, since for each $i \in \{2,\ldots,k\}$ the family $\Qq'_i$ has size at least $(2t^2 \cdot \ell)^{(k-1) \cdot 2t^3}$, we can apply the induction hypothesis for $k-1$ and the families $\Qq_2',\ldots,\Qq_k'$. Thus we obtain pairwise anticomplete paths $Q_2, \ldots, Q_{k}$, chosen from $\Qq'_2,\ldots, \Qq'_{k}$, respectively.
Since $Q_1$ is anticomplete to all the paths in $\bigcup_{i=2}^k \Qq'_i$, the path $Q_1$ is in particular anticomplete to each other $Q_i$. We obtain a collection as desired.
\end{proof}

We are now in a position to complete Step~6. Recall that in Steps~4 and~5 we have constructed an independent set $A'$ of size at least $t/2$ and, for each pair of distinct vertices $u,v \in A'$, a partially anticomplete family of induced paths $\Pp'_{u,v}$ of size at least $(2t^2\cdot \ell)^{2t^4}$. Moreover, as guaranteed by Claim~\ref{cl:step4}, all the paths in $\bigcup_{u,v\in A'}\Pp'_{u,v}$ are pairwise vertex-disjoint. Select arbitrary distinct vertices $a_0,\ldots,a_{t/2}\in A'$. We may now apply Lemma~\ref{lem:Pt_manypaths} to the families $\Pp'_{a_0,a_1},\Pp'_{a_1,a_2},\ldots,\Pp'_{a_{t/2-2},a_{t/2-1}},\Pp'_{a_{t/2-1},a_0}$, noting that each of these families is sufficiently large for the prerequisites of Lemma~\ref{lem:Pt_manypaths} to be satisfied. Thus we may construct a pairwise anticomplete collection of paths $P_{a_0,a_1},P_{a_1,a_2},\ldots,P_{a_{t/2-2},a_{t/2-1}},P_{a_{t/2-1},a_0}$, selected from $\Pp'_{a_0,a_1},\Pp'_{a_1,a_2},\ldots,\Pp'_{a_{t/2-2},a_{t/2-1}},\Pp'_{a_{t/2-1},a_0}$, respectively.
It now remains to observe that
\[
a_0-P_{a_0,a_1}- a_1-P_{a_1,a_2}-a_2-\ldots -a_{t/2-2}- P_{a_{t/2-2},a_{t/2-1}}-a_{t/2-1}- P_{a_{t/2-1,a_0}} - a_{0}
\]
is an induced cycle on at least $t$ vertices in $G$. This contradiction completes the proof of Theorem~\ref{thm:main}.

\section{Treedepth of \texorpdfstring{$P_t$}{P\_t}-free graphs without \texorpdfstring{$K_{\ell,\ell}$}{K\_\{l,l\}} as a subgraph}\label{sec:td}

Recall that the treewidth $\tw(G)$ of a graph $G$ is the minimum $k$ such that $G$ admits a {\em{tree decomposition}} of width at most $k$: a tree $T$ with each node $x$ associated with a {\em{bag}} $\beta(x)\subseteq V(G)$ of size at most $k+1$ such that for every edge $uv\in E(G)$ there exists $x\in V(T)$ satisfying $u,v\in \beta(x)$, and for every $u\in V(G)$ the set $\{x\in V(T)~|~u\in \beta(x)\}$ is non-empty and connected in $T$. Similarly, the treedepth $\td(G)$ of $G$ is the minimum height of an {\em{elimination forest}} of $G$: a rooted forest $F$ on the same vertex set of $G$ where for every edge $uv\in E(G)$, the vertices $u$ and $v$ are bound by the ancestor/descendant relation in $F$. 

It is well known that for every $n$-vertex graph $G$, we have
\[\tw(G)+1\leq \td(G)\leq (\tw(G)+1)\cdot \log n\text{;}\]
see e.g.~\cite[Section~6.4]{sparsity}. In general, the right inequality needs to involve a factor dependant on the vertex count $n$, because the $n$-vertex path has treewidth $1$ and treedepth as high as $\lceil \log (n+1)\rceil$. We now show that in the setting of $P_t$-free graphs, we can give an alternative bound that does not involve~$n$.

\begin{lemma}\label{lem:Pt-td-tw}
If $G$ is a $P_t$-free graph, then
\[\td(G)\leq (\tw(G)+1)^{t-1}\text{.}\]
\end{lemma}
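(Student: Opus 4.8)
The plan is to prove this by induction on $\tw(G)$, exploiting the standard fact that a graph of treewidth $w$ either has a small number of vertices or admits a balanced separator of size at most $w+1$, and crucially that in a $P_t$-free graph we can avoid long paths ``passing through'' such a separator in a recursion that builds the elimination forest. First I would set $w=\tw(G)$ and proceed to construct an elimination forest of small height. If $G$ is disconnected we handle each component separately, so assume $G$ is connected. Take a balanced separator $S$ with $|S|\le w+1$ whose removal leaves components each of size at most $|V(G)|/2$; place all of $S$ at the top of the elimination forest as a path of $|S|$ vertices (in any order), and then recurse into $G-S$, attaching the forests built for the components of $G-S$ below this path. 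This is the textbook construction giving $\td(G)\le (w+1)\log n$; the point is that $P_t$-freeness lets us replace the $\log n$ factor with a bound depending only on $t$ and $w$.

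The key observation is the following. Suppose we recurse, and at some point we are processing an induced subgraph $G'$ of $G$ obtained after having already removed, say, separators $S_1,S_2,\dots,S_j$ along the current root-to-node branch. I would argue that $j$ cannot be too large — more precisely, that the recursion depth measured in ``number of separators removed along a branch'' is bounded by roughly $(w+1)^{t-2}$, which after multiplying by $|S_i|\le w+1$ gives the claimed bound $(w+1)^{t-1}$. To see why the branch depth is bounded, the idea is that along a branch, each $G'$ in the recursion is connected (we restrict to components), and consecutive separators are ``nested'' in the block structure; if the branch were too long, one could pick one vertex from each of $t$ consecutive separators and stitch them together, using connectivity within the intermediate pieces, into an induced path on $t$ vertices. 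The bound $(w+1)^{t-2}$ rather than something linear in $t$ arises because to guarantee we can actually realize an \emph{induced} $P_t$ (with no chords), we need to branch over which of the $\le w+1$ vertices of each separator to use and argue via a pigeonhole / Ramsey-type counting that among sufficiently many consecutive separators there is a choice forming an induced path — each ``level'' of this argument costs a factor of $w+1$, and we need $t-2$ internal vertices.

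Concretely, I would formalize a recursion that, given a connected induced subgraph $H$ of $G$ together with a specified ``interface'' vertex $a\in V(H)$ adjacent (in $G$) to the part already eliminated above, builds an elimination forest of $H$ whose height is at most $(w+1)\cdot g(t')$, where $t'$ measures how short an induced path ending at $a$ inside $H$ (together with the already-eliminated prefix) we are forced to keep, and $g$ satisfies $g(t')\le (w+1)^{t'-2}$. Each recursive call peels off a balanced separator $S$ of $H$ of size $\le w+1$, and the interface vertex of a child component $C$ is chosen to be a neighbor of $a$ in $S$ together with a connection through $C$; the parameter drops from $t'$ to $t'-1$ because we have now committed to one more vertex of a potential induced path, and $P_t$-freeness of $G$ guarantees the recursion bottoms out before the parameter reaches $2$ (a path of $t$ vertices would be induced in $G$). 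Summing $(w+1)$ over the at most $(w+1)^{t-2}$ levels of the branch yields $\td(G)\le (w+1)^{t-1}$.

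The main obstacle I anticipate is the chord-freeness bookkeeping: when we stitch together one vertex from each of several consecutive separators along a branch, we must ensure the resulting path is \emph{induced} in $G$, i.e., has no chords between non-consecutive chosen vertices. Handling this is exactly where the $(w+1)^{t-2}$ (rather than linear) blow-up comes from — one needs to maintain, along the recursion, not just a single interface vertex but enough ``room'' (a set of candidate vertices, or an inductive invariant over all small induced paths into the current piece) so that a chord-free continuation always exists, and arguing that this room does not shrink too fast is the delicate quantitative heart of the proof. I expect the cleanest route is to phrase the invariant so that the piece being recursed on, together with the eliminated prefix, still admits an induced path of the relevant short length to \emph{every} vertex on its boundary, and to check that removing one balanced separator degrades this by at most one in the length parameter and at most a factor $w+1$ in the count.
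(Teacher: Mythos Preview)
Your proposal is not a proof: the central step --- bounding the number of separators removed along a branch by $(w+1)^{t-2}$ --- is asserted but never established, and you yourself flag the ``chord-freeness bookkeeping'' as the main unresolved obstacle. The difficulty is real. Along a branch of the balanced-separator recursion you have nested pieces $C_1\supseteq C_2\supseteq\cdots$ and separators $S_1,S_2,\ldots$ with $S_{i+1}\subseteq C_i$, but there is no reason a vertex of $S_i$ should be adjacent to a vertex of $S_{i+1}$, nor that short connecting paths through the intermediate pieces avoid chords to earlier separators. Your proposed invariant (``the piece still admits an induced path of the relevant short length to every vertex on its boundary'') is too vague to check: it is unclear what ``the eliminated prefix'' is as a path, why removing one separator degrades the length parameter by exactly one, or why the factor-$(w+1)$ loss per level is enough to absorb all the choices needed to avoid chords with \emph{all} previously selected vertices, not just the last one. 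As written this is a plan, not an argument.

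The paper's proof is entirely different and much shorter. It uses the generalized coloring numbers: by known results, $\tw(G)+1=\scol_\infty(G)$ and $\td(G)=\wcol_\infty(G)$, and for every $r$ one has $\wcol_r(G)\le\scol_r(G)^r$. The only new observation is that in a $P_t$-free graph, any path witnessing (weak or strong) reachability may be shortcut to an induced path, hence has at most $t-1$ vertices; consequently $\scol_\infty=\scol_{t-1}$ and $\wcol_\infty=\wcol_{t-1}$. Chaining these gives
\[
\td(G)=\wcol_\infty(G)=\wcol_{t-1}(G)\le\scol_{t-1}(G)^{t-1}=\scol_\infty(G)^{t-1}=(\tw(G)+1)^{t-1}.
\]
This bypasses all the separator bookkeeping entirely: the ordering characterizations of $\tw$ and $\td$ already encode the right elimination structure, and the exponent $t-1$ comes cleanly from the Kierstead--Yang inequality $\wcol_r\le\scol_r^r$ rather than from any Ramsey-type counting.
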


Note that Theorem~\ref{thm:td} follows directly from combining Corollary~\ref{coro:tw} with Lemma~\ref{lem:Pt-td-tw}.

\medskip

For the proof of Lemma~\ref{lem:Pt-td-tw}, we will use the characterization of treewidth and treedepth via the {\em{generalized coloring numbers}}, introduced by Kierstead and Young~\cite{KiersteadY03}, which we define now.

Let $G$ be a graph and let $\preceq$ be a linear order on the vertex set of $G$. For two vertices $u\preceq v$ and a positive integer $r$, we shall say that $u$ is {\em{strongly $r$-reachable}} from $v$ if there exists a path $P$ of length at most $r$ with endpoints $u$ and $v$ such that for every internal vertex $w$ of $P$ we have $v\preceq w$. The notion of {\em{weak $r$-reachability}} is defined in the same way, except that we only require that every internal vertex $w$ of $P$ satisfies $u\preceq w$; that is, we allow $P$ to use vertices placed between $u$ and $v$ in the ordering $\preceq$. Note that we consider every vertex to be $r$-reachable from itself, both strongly and weakly.

Now, the {\em{strong $r$-coloring number}} of $G$, denoted $\scol_r(G)$, is the minimum integer $k$ such that there is a linear order of the vertices of $G$ in which every vertex strongly $r$-reaches at most $k$ vertices in total (including itself). The weak $r$-coloring number is defined in the same way, but using weak $r$-reachability instead of strong. While clearly the weak $r$-coloring number upper bounds the strong $r$-coloring number, it is not hard to prove that these two parameters are functionally equivalent.

\begin{lemma}[{\cite[Lemma~3]{KiersteadY03}}]\label{lem:col-wcol}
For every graph $G$ and integer $r$, we have
\[\wcol_r(G)\leq \scol_r(G)^r\text{.}\]
\end{lemma}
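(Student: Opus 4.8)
The plan is to fix a linear order $\preceq$ of $V(G)$ that witnesses $\scol_r(G)=k$, meaning that in this order every vertex strongly $r$-reaches at most $k$ vertices (including itself), and then to prove that in the \emph{same} order every vertex weakly $r$-reaches at most $k^r$ vertices. Since $\wcol_r(G)$ is defined as a minimum over all orders, this immediately yields $\wcol_r(G)\le k^r=\scol_r(G)^r$. For a vertex $v$, write $\mathrm{SReach}[v]$ for the set of vertices strongly $r$-reachable from $v$ in the order $\preceq$ (so $|\mathrm{SReach}[v]|\le k$ and $v\in\mathrm{SReach}[v]$), and for $1\le i\le r$ write $\mathrm{WReach}_i[v]$ for the set of vertices weakly $i$-reachable from $v$. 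The core claim, proved by induction on $i$, is that $|\mathrm{WReach}_i[v]|\le k^i$ for every $v$; taking $i=r$ gives the lemma.

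For the base case $i=1$, a path of length at most $1$ has no internal vertices, so weak and strong $1$-reachability coincide and $\mathrm{WReach}_1[v]=\mathrm{SReach}_1[v]\subseteq\mathrm{SReach}[v]$, which has size at most $k$. For the inductive step ($i\ge 2$), fix $v$ and a vertex $u\in\mathrm{WReach}_i[v]$ with $u\neq v$, and take a \emph{shortest} path $P=(v=w_0,w_1,\dots,w_\ell=u)$ of length $\ell\le i$ witnessing weak $i$-reachability, so $u\preceq w_j$ for every internal index $0<j<\ell$ and $u\preceq v$. Let $p\ge 1$ be the smallest index with $w_p\preceq v$; this exists because $w_\ell=u\preceq v$. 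Then $w_1,\dots,w_{p-1}$ all lie strictly above $v$, so the prefix $(w_0,\dots,w_p)$ certifies that $w_p$ is strongly $r$-reachable from $v$, i.e.\ $w_p\in\mathrm{SReach}[v]$. On the other hand, the suffix $(w_p,\dots,w_\ell)$ has length $\ell-p\le i-1$, its internal vertices are internal vertices of $P$ and hence lie above $u$, and $w_p\succeq u$ as well (either $w_p$ is internal to $P$, or $p=\ell$ and $w_p=u$); therefore $u\in\mathrm{WReach}_{i-1}[w_p]$. Together with the trivial facts $v\in\mathrm{SReach}[v]$ and $v\in\mathrm{WReach}_{i-1}[v]$, this shows $\mathrm{WReach}_i[v]\subseteq\bigcup_{x\in\mathrm{SReach}[v]}\mathrm{WReach}_{i-1}[x]$, and the induction hypothesis gives $|\mathrm{WReach}_i[v]|\le k\cdot k^{i-1}=k^i$.

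The only genuinely delicate point — the place where one must be careful in the write-up — is the choice of the splitting vertex $w_p$: it must simultaneously be strongly reachable from $v$, which is what forces $p$ to be chosen as the \emph{first} index where the path returns to a vertex $\preceq v$ (guaranteeing that the prefix has all internal vertices above $v$), and it must be a legitimate source for a shorter weak reach of $u$, which needs $w_p\succeq u$ and requires treating the degenerate case $p=\ell$ (where $w_p=u$) on its own. I expect the remaining bookkeeping — the length bounds $p\le\ell\le r$ and $\ell-p\le i-1$, passing from a shortest to an arbitrary witnessing path, and checking that $v$ itself is absorbed into the union — to be routine. No external results are needed beyond the definitions of $\scol_r$ and $\wcol_r$ and the obvious monotonicity $\mathrm{SReach}_1[v]\subseteq\mathrm{SReach}[v]$.
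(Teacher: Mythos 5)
Your proof is correct. The paper itself gives no argument for this lemma (it is quoted directly from Kierstead and Young as \cite[Lemma~3]{KiersteadY03}), and your induction — splitting a weak-reachability path at the first vertex $w_p\preceq v$, so that the prefix witnesses $w_p\in\mathrm{SReach}[v]$ and the suffix witnesses $u\in\mathrm{WReach}_{i-1}[w_p]$, giving $|\mathrm{WReach}_i[v]|\leq \scol_r(G)^i$ in an order optimal for $\scol_r$ — is exactly the standard argument from the cited source, with the delicate points (minimality of $p$, the case $p=\ell$, and absorbing $v$ into the union) handled correctly.
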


We may also define the limit variants of the strong and the weak coloring number, $\scol_{\infty}(G)$ and $\wcol_{\infty}(G)$, where we do not impose any restriction on the length of paths $P$ witnessing reachability. It appears that these two notions essentially coincide with treewidth and treedepth, respectively.

\begin{lemma}[see e.g. Chapter 1, Theorems 1.17 and 1.19 of~\cite{notes}]\label{lem:col-wcol-tw-td}
For every graph $G$, we have
\[\tw(G)=\scol_{\infty}(G)-1\qquad\qquad\text{and}\qquad\qquad \td(G)=\wcol_{\infty}(G)\text{.}\]
\end{lemma}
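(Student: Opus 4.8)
The plan is to route everything through the \emph{generalized coloring numbers} and to exploit $P_t$-freeness to cap the relevant reachability radius at a fixed value. By Lemma~\ref{lem:col-wcol-tw-td} we have $\td(G)=\wcol_\infty(G)$ and $\tw(G)+1=\scol_\infty(G)$, so the statement is equivalent to the inequality $\wcol_\infty(G)\le \scol_\infty(G)^{t-1}$, and it is this form that I would prove.

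The key step — and essentially the only place where the hypothesis is used — is a ``shortcutting'' observation: for \emph{every} linear order $\preceq$ on $V(G)$ and every vertex $v$, a vertex that is weakly $\infty$-reachable from $v$ is in fact weakly $(t-1)$-reachable from $v$. To see this, fix $\preceq$ and $u\preceq v$ with $u$ weakly $\infty$-reachable from $v$, and consider the induced subgraph $G_u\coloneqq G[\{w\in V(G)\colon u\preceq w\}]$. By the definition of weak reachability, every path witnessing weak $\infty$-reachability of $u$ from $v$ has all its vertices in $G_u$, and conversely every $u$--$v$ path inside $G_u$ witnesses weak $\infty$-reachability; hence $u$ and $v$ lie in the same connected component of $G_u$. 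Let $P$ be a shortest $u$--$v$ path in $G_u$. Since shortest paths are induced and $G_u$ is an induced subgraph of $G$, the path $P$ is an induced path of $G$, so $P_t$-freeness forces $|V(P)|\le t-1$, i.e. $P$ has length at most $t-2\le t-1$. Thus $u$ is weakly $(t-1)$-reachable from $v$, as claimed.

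Since weak $(t-1)$-reachability is trivially a special case of weak $\infty$-reachability, the observation shows that for every order $\preceq$ the sets of vertices weakly $(t-1)$-reachable and weakly $\infty$-reachable from any given vertex coincide; taking cardinalities and minimizing over $\preceq$ gives $\wcol_\infty(G)=\wcol_{t-1}(G)$. I would then finish with two standard comparisons: Lemma~\ref{lem:col-wcol} with $r=t-1$ yields $\wcol_{t-1}(G)\le \scol_{t-1}(G)^{t-1}$, and since strong $(t-1)$-reachability is a restriction of strong $\infty$-reachability (the reachable sets satisfy $\mathrm{SReach}^{\preceq}_{t-1}[v]\subseteq\mathrm{SReach}^{\preceq}_{\infty}[v]$ for every fixed $\preceq$), we also get $\scol_{t-1}(G)\le \scol_\infty(G)$. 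Chaining these,
\[\td(G)=\wcol_\infty(G)=\wcol_{t-1}(G)\le \scol_{t-1}(G)^{t-1}\le \scol_\infty(G)^{t-1}=(\tw(G)+1)^{t-1}\text{,}\]
which is the desired bound. There is no serious obstacle given the cited results: the content is concentrated in the shortcutting observation, and the only point needing mild care is that the radius reduction $\wcol_\infty=\wcol_{t-1}$ and the monotonicity $\scol_{t-1}\le\scol_\infty$ must first be established for each fixed vertex order and only afterwards combined with the minimization over orders.
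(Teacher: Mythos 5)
You have not proved the statement in question. Your argument opens with ``By Lemma~\ref{lem:col-wcol-tw-td} we have $\td(G)=\wcol_{\infty}(G)$ and $\tw(G)+1=\scol_{\infty}(G)$'', which is exactly the statement to be established, and everything that follows --- the shortcutting of reachability witnesses to induced paths, the identities $\wcol_{\infty}(G)=\wcol_{t-1}(G)$ and $\scol_{t-1}(G)\le\scol_{\infty}(G)$, and the application of Lemma~\ref{lem:col-wcol} --- is a proof of a different result, namely Lemma~\ref{lem:Pt-td-tw}, and indeed it reproduces essentially the paper's own proof of that lemma. As an argument for Lemma~\ref{lem:col-wcol-tw-td} it is therefore circular and contains none of the required content. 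A telltale sign is that you invoke $P_t$-freeness: the lemma you were asked to prove is an unconditional identity valid for every graph, so no hypothesis on induced paths can play any role in it.

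What a proof of the statement actually requires is the two translations between vertex orders and decompositions, which the paper itself does not spell out but delegates to~\cite{notes}. For $\td(G)=\wcol_{\infty}(G)$: given an elimination forest of height $h$, order the vertices so that ancestors precede descendants; for any path $P$ witnessing weak $\infty$-reachability of $u$ from $v$, the vertex of $P$ of smallest depth is an ancestor of all vertices of $P$ and hence must equal $u$, so every vertex weakly $\infty$-reaches only itself and its ancestors, giving $\wcol_{\infty}(G)\le h$. Conversely, from an order in which every vertex weakly $\infty$-reaches at most $k$ vertices, build an elimination forest by taking as root of each connected component its $\preceq$-minimum vertex and recursing; every ancestor of $v$ is then weakly $\infty$-reachable from $v$, so the height is at most $k$. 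For $\tw(G)=\scol_{\infty}(G)-1$ one argues through elimination orderings: for a fixed order, the set strongly $\infty$-reached by $v$ consists of $v$ together with the $\preceq$-earlier neighbors of the component of $v$ in the graph induced by $v$ and the later vertices, and minimizing the maximum size of these sets over all orders is precisely the elimination-ordering (chordal-completion) characterization of $\tw(G)+1$. None of this appears in your proposal, so the gap is the entire lemma; what you did prove would belong, correctly, to the proof of Lemma~\ref{lem:Pt-td-tw} instead.
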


With Lemmas~\ref{lem:col-wcol} and~\ref{lem:col-wcol-tw-td} recalled, we can give a proof of Lemma~\ref{lem:Pt-td-tw}.

\begin{proof}[Proof of Lemma~\ref{lem:Pt-td-tw}]
Observe that a path witnessing strong or weak reachability can be always assumed to be induced, because otherwise we can shortcut it. Hence, from the $P_t$-freeness of $G$ we infer that
\[\scol_{\infty}(G)=\scol_{t-1}(G)\qquad\qquad\textrm{and}\qquad\qquad \wcol_{\infty}(G)=\wcol_{t-1}(G)\text{.}\]
Therefore, by Lemmas~\ref{lem:col-wcol} and~\ref{lem:col-wcol-tw-td} we have
\[\td(G)=\wcol_{\infty}(G)=\wcol_{t-1}(G)\leq \scol_{t-1}(G)^{t-1}= \scol_{\infty}(G)^{t-1}= (\tw(G)+1)^{t-1}\text{.}\qedhere\]
\end{proof}

\section{Conclusion}
During the reviewing process of our paper, Scott, Seymour, and Spirkl~\cite{ScottSS21-imp} reproved and extended our Theorem~\ref{thm:main} (using a different approach). In particular, they showed the following result.

\begin{theorem}[Scott, Seymour, and Spirkl~\cite{ScottSS21-imp}]
For every tree $H$, every $H$-free graph $G$ that does not contain $K_{\ell,\ell}$ as a subgraph has degeneracy at most $f(H,\ell)$,
where $f$ depends polynomially on $\ell$.
\end{theorem}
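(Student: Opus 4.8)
The plan is to mimic, at a high level, the structure of our proof of Theorem~\ref{thm:main}, but replacing the use of $C_{\geq t}$-freeness (and the induced-cycle obstruction) by $H$-freeness for a fixed tree $H$, with $H$ playing the role of the forbidden induced subgraph. We again argue by contradiction: assuming a $K_{\ell,\ell}$-subgraph-free graph $G$ with minimum degree $d$ sufficiently large (polynomial in $\ell$, with the exponent depending only on $H$), we will build an induced copy of $H$ inside $G$, a contradiction.

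First I would invoke Theorem~\ref{thm:Kostochka} to pass from large minimum degree to a large clique minor $\K$ of size $W = \mathrm{poly}(\ell)$, and then refine $\K$ using the analogue of Lemmas~\ref{lem:big} and~\ref{lem:full} adapted to the $H$-free setting: since $H$ is a tree of bounded size $h\coloneqq|V(H)|$, any long induced path in $G$ already contains an induced $H$ (as $H$ is a minor of a long path only if $H$ is itself a path, so more care is needed) — actually the cleaner route is to use that a long induced path contains $P_h$ as an induced subgraph, and therefore the branch-set diameter bound still holds with $t$ replaced by $h$, because a sufficiently long shortest path inside a branch set, concatenated with a short connector through two other branch sets, yields a long induced path, hence an induced $P_h$, hence an induced $H$ whenever $H=P_h$; for general trees $H$ we instead argue directly that $G$ contains every tree on $h$ vertices as an induced subgraph once we have enough ``full'' structure. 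This gives us a clique minor $\K'$ of size $Z=\mathrm{poly}(\ell)$ in which every branch set has small diameter and contains a full vertex. The core combinatorial engine is then the following: using the Fox--Sudakov theorem (Theorem~\ref{thm:FoxS} applied with $H$) in place of Corollary~\ref{cor:FoxS}, we can repeatedly extract, from large vertex sets, large independent subsets, losing only a polynomial factor each time; and using the $K_{\ell,\ell}$-freeness via the Sauer--Shelah-type arguments (Lemmas~\ref{lem:traces0}, Corollaries~\ref{cor:traces},~\ref{cor:traces3}) — whose proofs used a $C_t$ obstruction but which can be re-derived using a ``broom'' or caterpillar obstruction built from $H$ — we control how these independent sets interact.

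The construction of the induced $H$ then proceeds by a branching version of Steps~4--6: root $H$ at some vertex, realize the root (and, more generally, each internal node) as a carefully chosen full vertex coming from a branch set of $\K'$, and realize each edge of $H$ by a short induced path threaded through a private branch set, exactly as the paths $\Pp_{u,v}$ connected pairs $a_i,a_{i+1}$ in our cycle. The key points to re-establish are: (i) the paths realizing distinct edges of $H$ can be made pairwise anticomplete (via the partially-anticomplete machinery of Lemmas~\ref{lem:makepartiallyanticompleye},~\ref{lem:newtraces2},~\ref{lem:Pt_manypaths}, which go through verbatim once Corollary~\ref{cor:FoxS} is replaced by Theorem~\ref{thm:FoxS} for $H$ and the $C_t$ obstruction in Lemma~\ref{lem:traces0} is replaced by an appropriate $H$-obstruction), and (ii) the chosen full vertices, together with the interior vertices of the connecting paths, induce exactly the desired tree $H$ and nothing more — which is where the ``no edges between $A'$ and $V(P)$ apart from the two prescribed ones'' clause of Claim~\ref{cl:step4}, proved via the interference-matrix Lemma~\ref{lem:interference}, is essential. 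Since $H$ is a tree, there are no ``extra'' edges required inside $H$, so once the interiors are anticomplete and the only attachments to the branch vertices are the prescribed ones, the induced subgraph on these vertices is precisely $H$.

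The main obstacle I anticipate is re-proving the VC-dimension / neighborhood-regularity statements (Lemma~\ref{lem:traces0} and its corollaries) in the $H$-free world: the current proof crucially builds an induced $C_t$ from a shattered independent set, and for a general tree $H$ one must instead build an induced $H$ — or an induced subdivision of a star, or an induced caterpillar — from a shattered set. This requires choosing the ``right'' obstruction graph for a given tree $H$ and verifying that a large shattered set yields it; for $H$ a path this is the subdivided-star argument of Section~\ref{sec:p5free}, and for general trees one needs the analogous but more elaborate combinatorial lemma. Once this replacement lemma is in hand, the rest of the argument is a bookkeeping exercise tracking how the polynomial exponent grows through the (bounded, depending only on $h$) number of applications of Theorem~\ref{thm:FoxS} and the interference-matrix step, yielding degeneracy $\ell^{f(H)}$ with $f(H)$ depending only on $|V(H)|$ and the Fox--Sudakov constant $\varepsilon_H$.
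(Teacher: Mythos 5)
There is a genuine gap, and it sits at the very heart of the plan rather than in the technical lemmas you flag as "the main obstacle." Note first that the paper does not prove this statement at all: it is quoted from Scott, Seymour, and Spirkl, who (as the conclusion explicitly says) use a \emph{different} approach, so your attempt has to stand on its own. The fatal problem is the final contradiction step. In the scheme of Steps~4--6, the vertices playing the nodes of the forbidden structure are drawn from an \emph{independent} set $A'$, and every edge of the target structure is realized by an induced path with internal vertices threaded through a private branch set (every such connection has length at least two, since the path endpoints are merely adjacent to the two chosen nodes). Applied to a tree $H$, this construction therefore yields an induced \emph{proper subdivision} of $H$ in which the images of adjacent vertices of $H$ are non-adjacent in $G$ --- not an induced copy of $H$. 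For a general tree this is no contradiction with $H$-freeness: for instance, if $H$ is the double star (two adjacent centers, each with two pendant leaves), then the subdivision obtained by subdividing the central edge does not even contain $H$ as a subgraph, let alone as an induced subgraph. The paper's argument works precisely because the forbidden families there are closed under subdivision ($C_{\geq t}$-free graphs are $C_t$-subdivision-free, $P_t$-free graphs are $P_t$-subdivision-free); this is exactly why the conclusion section formulates the open question for $H$-\emph{subdivision}-free graphs instead of claiming the method extends to arbitrary trees. Moreover, adjacent internal vertices of $H$ would require actual edges between the chosen node vertices, whereas Claim~\ref{cl:step4} and Lemma~\ref{lem:interference} deliver independence and anticompleteness --- the opposite of what an induced copy of a non-star tree needs.

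The preparatory steps break for the same reason. Lemmas~\ref{lem:depth} and~\ref{lem:big} derive their conclusions by exhibiting long induced paths or cycles, which are forbidden in the $C_{\geq t}$-free setting but perfectly legal in an $H$-free graph when $H$ is a tree other than a path (claw-free graphs, say, contain arbitrarily long induced paths and cycles). Your proposed fix is explicitly restricted to $H=P_h$, and the fallback ``argue directly that $G$ contains every tree on $h$ vertices once we have enough full structure'' is an assertion, not an argument --- and it is essentially the whole theorem. Ironically, the part you single out as the main difficulty, generalizing Lemma~\ref{lem:traces0}, is the least problematic: since a tree is bipartite with both sides independent, a large shattered independent set with witnesses in an independent set does yield an induced copy of $H$ (one can make the required traces pairwise distinct by reserving an extra ``tag'' element of the shattered set for each vertex on the witness side, and these tags lie outside the copy, so inducedness is preserved). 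So the VC-dimension machinery survives, but the skeleton it is meant to serve --- manufacturing the forbidden structure from a clique minor via full vertices and connecting paths --- cannot produce an induced $H$ for a general tree, and the proposal does not prove the theorem.
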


Recall that a similar result cannot hold if $H$ is not a tree, as there are graphs with arbitrarily large girth and chromatic number~\cite{erdos1959graph}.
However, it might still be true if we exclude $H$ and all its subdivisions. Note that $P_t$-free graphs are exactly $P_t$-subdivision-free graphs and $C_{\geq t}$-free graphs are exactly $C_t$-subdivition-free graphs. We suggest the following conjecture.

\begin{conjecture}
For every graph $H$, every $H$-subdivision-free graph $G$ that does not contain $K_{\ell,\ell}$ as a subgraph has degeneracy at most $f(H,\ell)$,
where $f$ depends polynomially on $\ell$.
\end{conjecture}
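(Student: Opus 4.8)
The plan is to run the clique-minor strategy of Section~\ref{sec:main}, but with the induced cycle produced at the end replaced by an induced subdivision of $H$. Fix $H$ with $h\coloneqq|V(H)|$ and $m\coloneqq|E(H)|$, and first dispose of the easy case: if $H$ is a forest, then $H$-subdivision-free graphs are in particular $H$-free, so the statement is implied by the theorem of Scott, Seymour, and Spirkl quoted above. Hence assume $H$ contains a cycle. Suppose for contradiction that $G$ is $H$-subdivision-free, contains no $K_{\ell,\ell}$ as a subgraph, and has degeneracy larger than $d$ for a suitable $d$ of the form $\alpha(H)\cdot\ell^{\beta(H)}$. By Theorem~\ref{thm:Kostochka}, $G$ then contains a clique minor $\K$ of size $p$, where $p$ is a large polynomial in $\ell$ with exponent depending only on $H$; pass to a minimal such $\K$.

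Next I would \emph{purify} $\K$ in the spirit of Section~\ref{sec:tools}: using only $K_{\ell,\ell}$-freeness together with $H$-subdivision-free replacements of Lemmas~\ref{lem:depth},~\ref{lem:big}, and~\ref{lem:full}, extract a clique minor $\K'$ of size polynomial in $\ell$ in which every branch set contains a vertex that is \emph{full} with respect to $\K'$ and every branch set has bounded-length shortest paths. Given $\K'$, designate $h$ of its branch sets to carry the branch vertices of the subdivision, take their full vertices $b_1,\dots,b_h$, and for each edge of $H$ route, through a private branch set, an induced path between the two relevant $b_i,b_j$ (possible since the $b_i$ are full). From here the combinatorial machinery of Steps~4--6 applies verbatim: the interference matrix (Lemma~\ref{lem:interference}) to make the set of chosen branch vertices mutually non-interfering; Lemma~\ref{lem:traces0} with Corollaries~\ref{cor:traces} and~\ref{cor:traces3} together with Theorem~\ref{thm:FoxS} to thin the connecting paths so that every side-set is independent and the paths form a partially anticomplete family; and Lemma~\ref{lem:Pt_manypaths} to pick one path per edge of $H$ so that all chosen paths are pairwise anticomplete. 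Their union is an induced subdivision of $H$ in $G$ — the desired contradiction — and since each use of Theorem~\ref{thm:FoxS} or of Sauer--Shelah only multiplies exponents by quantities depending on $H$, the value of $p$, and hence $d$, stays polynomial in $\ell$.

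The hard part will be the purification lemmas, which are the \emph{only} place the hypothesis enters and which currently rely on $C_{\geq t}$-freeness rather than $H$-subdivision-freeness: Lemma~\ref{lem:depth} controls branch-set diameter, Lemmas~\ref{lem:big} and~\ref{lem:full} produce full vertices, and Lemma~\ref{lem:traces0} bounds VC dimension by building a $C_t$ from a large shattered independent set. Replacing the VC-dimension step looks like the crux: one wants that in an $H$-subdivision-free, $K_{\ell,\ell}$-free graph the neighbourhood set system restricted to an independent set has VC dimension bounded in terms of $H$, equivalently that a large shattered independent set $Z$ yields an induced subdivision of $H$ (for every edge $\{i,j\}$ of $H$, shattering supplies a vertex adjacent in $Z$ to exactly $z_i,z_j$, and Fox--Sudakov applied to these new vertices cleans up adjacencies among them, using that $Z$ is independent). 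Replacing Lemma~\ref{lem:depth} is subtler, because a long shortest path inside a branch set, closed up through two neighbouring branch sets, only yields a long induced cycle, which is not by itself an induced subdivision of a general $H$; one must instead fold the length control of the connecting paths into the final extraction — roughly, prove a quantitative ``many disjoint full vertices with short mutual connections'' statement from $H$-subdivision-freeness. I expect this quantitative purification, keeping every bound polynomial in $\ell$, to be the real content of the conjecture; the clique-minor skeleton and the interference/VC/Fox--Sudakov cleanup of Section~\ref{sec:main} should then carry over with only bookkeeping changes.
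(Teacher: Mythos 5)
This statement is not a theorem of the paper at all: it is stated as an open \emph{conjecture} in the Conclusion, and the paper offers no proof of it (the only known general bound, due to K\"uhn and Osthus, is superpolynomial in $\ell$). Your proposal is accordingly a research plan rather than a proof, and by your own admission it leaves unproven exactly the steps where the hypothesis would have to be used. Concretely, every place in Sections~\ref{sec:tools} and~\ref{sec:main} where $C_{\geq t}$-freeness enters does so by exhibiting a long \emph{induced cycle} and declaring a contradiction: Lemma~\ref{lem:depth} (closing a long shortest path inside a branch set through two neighbouring branch sets), Lemma~\ref{lem:big} (the random cyclic sequence of branch sets), and Lemma~\ref{lem:traces0} (building an induced $C_t$ from a shattered independent set). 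When $H$ merely contains a cycle but is not itself a cycle or path, a long induced cycle is \emph{not} an induced subdivision of $H$ and yields no contradiction, so none of these lemmas survives the substitution of hypotheses; you have identified this but have not supplied replacements, and keeping any replacements polynomial in $\ell$ is precisely the open content of the conjecture.

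Moreover, the one replacement you do sketch, for the VC-dimension bound, does not work as stated. Shattering supplies exactly \emph{one} vertex per trace, so for each edge $\{i,j\}$ of $H$ you get a single candidate vertex adjacent in $Z$ to exactly $z_i,z_j$; there are only $m=|E(H)|$ such vertices, a constant, so applying Theorem~\ref{thm:FoxS} to them gives nothing (you cannot discard any of them, since each carries a distinct edge of $H$, and you have no control over adjacencies among them or over their neighbours outside $Z$). To make that idea work you would need many disjoint candidates per edge, which shattering of a single set does not provide. Similarly, routing the edges of $H$ through private branch sets at full vertices $b_1,\dots,b_h$ requires the $b_i$ to be pairwise non-adjacent and the connecting paths to avoid edges to the other $b_i$'s and to each other; the cleanup machinery of Steps~4--6 plausibly adapts, but it is built on top of Lemmas~\ref{lem:big},~\ref{lem:full}, and~\ref{lem:traces0}, so it cannot be invoked ``verbatim'' once those are gone. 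In short, the skeleton you describe is a reasonable way one might attack the conjecture, but the argument has a genuine gap at its core and the statement remains open.
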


Recall that by the already mentioned result by K\"uhn and Osthus~\cite{DBLP:journals/combinatorica/KuhnO04a}, the degeneracy of $H$-subdivision-free graphs  that do not contain $K_{\ell,\ell}$ as a subgraph is bounded in terms of $H$ and $\ell$, but the known bound is superpolynomial in $\ell$.

Let us also point out that in another rich family of graph classes where $\chi$-boundedness is studied, i.e., geometric intersection graphs, changing the parameter to the size of a largest balanced biclique makes the problem easy. Indeed, by Theorem~\ref{thm:FoxPach}, every string graph that does not contain $K_{\ell,\ell}$ as a subgraph has chromatic number bounded by $\Oh(\ell \log \ell)$.

\paragraph*{Acknowledgements.} A significant part of the work leading to the results presented in this paper was carried out during the Structural Graph Theory Workshop, held in Gułtowy in June 2019, as well as Dagstuhl Seminar 19271 {\em{Graph Colouring: from Structure to Algorithms}}.
The former workshop was supported by a project that has received funding from the European Research Council (ERC) under the European Union's Horizon 2020 research and innovation programme under grant agreement No 714704 (PI: Marcin Pilipczuk).
We acknowledge the extraordinarily inspiring and productive atmosphere at both these events. We also thank Archontia Giannopoulou, Carla Groenland, Tereza Klimo\v{s}ov\'a, Piotr Micek, and Irene Muzi for helpful discussions at the preliminary stages of this project.

\bibliographystyle{abbrv}
\bibliography{main}

\end{document}